\newtheorem{theorem}{Theorem}[section]
\newtheorem{lemma}[theorem]{Lemma}
\newtheorem{proposition}[theorem]{Proposition}
\newtheorem{corollary}[theorem]{Corollary} 
\theoremstyle{definition}  
\newtheorem{definition}[theorem]{Definition}
\newtheorem{example}[theorem]{Example}
\newtheorem{conjecture}[theorem]{Conjecture}  
\newtheorem{question}[theorem]{Question}
\newtheorem{remark}[theorem]{Remark}
\newcommand{\id}{\text{id}} 
\newcommand{\Id}{\text{id}}
\newcommand{\FPdim}{\text{FPdim}} 
\newcommand{\FP}{\text{FPdim}}
\renewcommand{\Vec}{\text{Vec}}
\newcommand{\Hom}{\text{Hom}}
\newcommand{\Rep}{\text{Rep}}
\newcommand{\rev}{\text{rev}}
\newcommand{\sVec}{\text{s}\Vec}
\newcommand{\Fus}{\bf Fus}
\newcommand{\B}{\mathcal{B}}
\newcommand{\C}{\mathcal{C}}
\newcommand{\D}{\mathcal{D}}
\newcommand{\E}{\mathcal{E}}
\newcommand{\F}{\mathcal{F}}
\newcommand{\Z}{\mathcal{Z}}
\newcommand{\A}{\mathcal{A}}
\newcommand{\W}{\mathcal{W}}
\newcommand{\sW}{{s}\mathcal{W}}
\renewcommand{\O}{\mathcal{O}}
\newcommand{\be}{\mathbf{1}}
\newcommand{\g}{\mathfrak{g}}
\renewcommand{\be}{\mathbf{1}}
\newcommand{\bZ}{\mathbb{Z}}
\newcommand{\bt}{\boxtimes}
\newcommand{\bts}{\operatornamewithlimits{\boxtimes}_{\sVec}}
\newcommand{\ot}{\otimes}
\newcommand{\bte}{{\boxtimes}_{\E}}
\newcommand{\kk}{\mathbb{k}}
\newcommand{\bth}{\begin{theorem}}
\renewcommand{\eth}{\end{theorem}}
\newcommand{\bpr}{\begin{proposition}}
\newcommand{\epr}{\end{proposition}}
\newcommand{\ble}{\begin{lemma}}
\newcommand{\ele}{\end{lemma}}
\newcommand{\bco}{\begin{corollary}}
\newcommand{\eco}{\end{corollary}}
\newcommand{\bde}{\begin{definition}}
\newcommand{\ede}{\end{definition}}
\newcommand{\bex}{\begin{example}}
\newcommand{\eeqx}{\end{example}}
\newcommand{\bre}{\begin{remark}}
\newcommand{\ere}{\end{remark}}
\newcommand{\bcj}{\begin{conjecture}}
\newcommand{\ecj}{\end{conjecture}}
\newcommand{\beq}{\begin{equation}}
\newcommand{\eeq}{\end{equation}}
\newcommand{\lb}{\label}
\newcommand{\bpf}{\begin{proof}}
\newcommand{\epf}{\end{proof}}
\begin{document}
\title{On the structure of the Witt group of braided fusion categories}

\author{Alexei Davydov}
\address{A.D.: Department of Mathematics and Statistics,
University of New Hampshire,  Durham, NH 03824, USA}
\email{alexei1davydov@gmail.com}
\author{Dmitri Nikshych}
\address{D.N.: Department of Mathematics and Statistics,
University of New Hampshire,  Durham, NH 03824, USA}
\email{nikshych@math.unh.edu}
\author{Victor Ostrik}
\address{V.O.: Department of Mathematics,
University of Oregon, Eugene, OR 97403, USA}
\email{vostrik@math.uoregon.edu}

\date{\today}
\maketitle  

\begin{section}{Introduction}

In our previous work \cite{DMNO} (written jointly with M.~M\"uger) we introduced the Witt group
$\W$  of non-degenerate braided fusion categories over an algebraically closed field $\kk$
of characteristic zero. Its definition is similar to that of the classical
Witt group $\W_{pt}$ of finite abelian groups endowed with a non-degenerate quadratic form 
with values in $\kk^\times$ (such groups correspond to
pointed braided fusion categories).  Namely, $\W$ is obtained as the quotient of the monoid
of non-degenerate braided fusion categories by the submonoid of Drinfeld centers.
In particular, $\W$ contains $\W_{pt}$  as a subgroup.  

The group $\W_{pt}$ is explicitly known, see Section~\ref{Prelim-Witt}.  One would like to
have a similarly explicit description of the more complicated categorical Witt group $\W$. 
In \cite{DMNO} a number of questions regarding the structure of $\W$ was asked.  The goal
of the present paper is to answer some of these questions and further analyze the structure of $\W$.

The key observation used in this paper is that one can define the Witt group $s\W$ 
of {\em slightly degenerate} braided fusion categories and a group homomorphism $S: \W \to s\W$
whose kernel is the cyclic group $\mathbb{Z}/16\mathbb{Z}$ generated by the Witt
classes of Ising categories, see Section~\ref{homS}. 
 
It was pointed to us by V.~Drinfeld that
completely anisotropic slightly degenerate braided fusion categories admit a canonical
decomposition into a tensor product (over the category $\sVec$ of super vector spaces) 
of simple categories, see  Theorem~\ref{decomposition theorem}.  
Furthermore, there are very few  relations between the classes of simple categories in  $s\W$,
see Corollary~\ref{relations in sW}.  This allows to obtain a rather explicit description of the 
structure  of  $s\W$ (and, consequently, of  $\W$). Namely, we have 
\begin{equation}
s\W =  s\W_{pt} \bigoplus s\W_2 \bigoplus s\W_\infty,
\end{equation}
where $s\W_{pt}$ denotes the subgroup of $s\W$
generated by the Witt classes of  slightly degenerate pointed braided fusion categories,
$s\W_2$ is an elementary Abelian $2$-group, and the subgroup
$s\W_\infty$ is a free Abelian group of countable rank,
see Proposition~\ref{sW decomposition}.   In particular, groups $\W$ and $\sW$ are $2$-primary,
i.e., have no odd torsion.

The organization of the present paper can be  summarized as follows. 

Section~\ref{Preliminaries} contains a necessary background
material about fusion categories and definition of the Witt group from \cite{DMNO}.  Here we also
introduce the notion of tensor product of  braided fusion categories over  a symmetric category
(Section~\ref{Prelim-over E}). 

In Section~\ref{Etale algebras in products} we give a complete classification of \'etale algebras
in tensor products of braided tensor categories.  
The main result of this Section is Theorem~\ref{etale in CD}.

In Section~\ref{Tensor product decomposition} we establish a tensor product decomposition 
of a slightly degenerate braided fusion category without non-trivial Tannakian subcategories
into a tensor product of simple factors. This is done in  Theorem~\ref{decomposition theorem}.  

Finally, Section~\ref{Witt section} contains the definition of the Witt group of non-degenerate braided
fusion categories over a symmetric fusion category $\E$. The principal objects of our study
groups $\W$ and $s\W$ correspond to $\E=\Vec$ and $\sVec$, respectively. 
Here we describe the structure of $\sW$ (Section~\ref{structure}) and describe all relations
between the Witt classes of categories $\C(sl(2),\, k),\, k\geq 1$. 

\textbf{Acknowledgments} We are deeply grateful to Vladimir Drinfeld for sharing with us his ideas
regarding the structure of slightly degenerate categories.  The statement of Theorem~\ref{decomposition theorem}
is due to him.  We are thankful to Michael M\"uger  for  collaboration on \cite{DMNO}  (of which
this paper is a continuation) and for   many useful discussions about algebras in tensor
products of braided categories that led to results of Section~\ref{Etale algebras in products}.  
We also thank Terry Gannon for his insights about algebras in braided categories
associated to affine Lie algebras.   The work of D.N.\ was partially supported by the NSF grant 
DMS-0800545. The work of V.O.\ was  partially supported by the NSF grant DMS-0602263.

\end{section}

\begin{section}{Preliminaries}
\label{Preliminaries}

Throughout this paper our base field $\kk$ is an algebraically closed field of characteristic zero.

\subsection{Fusion categories}
\label{Prelim-fuscat}

By definition (see \cite{ENO1}), a {\em fusion category} over $\kk$
is a $\kk-$linear semisimple rigid tensor category with finitely many simple objects and
finite dimensional spaces of morphisms such that 
its unit object $\be$ is simple.  By a {\em fusion subcategory} of a fusion category 
we always mean a full tensor subcategory closed under taking of direct summands. 
Let $\Vec$ denote the fusion category of finite dimensional vector spaces over $\kk$.
Any fusion category $\A$ contains a trivial fusion subcategory consisting of multiples
of $\be$. We will  identify  this subcategory with $\Vec$. A fusion category $\A$
is called {\em simple} if $\Vec$ is the only proper fusion subcategory of $\A$.

If $\A$ and $\B$ are fusion subcategories of a fusion category $\C$ we will denote by 
$\A \vee \B$ the fusion subcategory of $\C$ generated by $\A$ and $\B$, that is smallest
fusion subcategory which contains both $\A$ and $\B$.

A fusion category is called {\em pointed} if all its simple objects are invertible.
For a fusion category $\A$ we denote  $\A_{pt}$ the maximal pointed fusion subcategory
of $\A$. 

We will denote $\A \boxtimes \B$ the tensor product of fusion categories $\A$ and $\B$.


For a  fusion category $\A$ we denote by $\O(\A)$ the set of
isomorphism classes of simple objects in $\A$.

Let $\A$ be a fusion category and let $K(\A)$ be its Grothendieck ring. 
There exists a unique ring
homomorphism $\FPdim: K(\A)\to \mathbb{R}$ such that $\FPdim(X)>0$ for any $0\ne X\in \A$, see
\cite[Section 8.1]{ENO1}. For a fusion category $\A$ one defines (see \cite[Section  8.2]{ENO1}) its
{\em Frobenius-Perron dimension}:
\begin{equation}
\label{FPdim def}
\FPdim(\A)=\sum_{X\in \O(\A)}\,\FPdim(X)^2.
\end{equation}

Let $\A_1,\, \A_2$ be fusion categories such that $\FPdim(\A_1) =\FPdim(\A_2)$.
By \cite[Proposition 2.19]{EO} a tensor functor $F: \A_1 \to \A_2$ is an equivalence
if and only if it is injective and if and only if it is surjective. 

Here we call $F$ {\em injective} if it is fully faithful and {\em surjective}
if every object $Y$ in $\A_2$ is a subobject of $F(X)$ for some $X$ in $\A_1$.

\subsection{Braided fusion categories}
\label{Prelim-brcat}

A {\em braided} fusion category  is a fusion category $\C$
endowed with a braiding $c_{X,Y}: X\ot Y\xrightarrow{\sim} Y\ot X$, see \cite{JS}. 
For a braided fusion category its {\em reverse} $\C^{\rev}$ 
is the same fusion category with a new braiding
$\tilde{c}_{X,Y}=c_{Y,X}^{-1}$. 

A braided fusion category is {\em symmetric} if
$\tilde{c} = c$.  A symmetric  fusion category $\C$ is {\em Tannakian} if there is a finite 
group $G$ such that  $\C$ is equivalent to the category $\Rep(G)$ of finite dimensional
representations of $G$  as a braided fusion category.  It is known \cite{D2} that $\C$ is Tannakian
if and only if there is a braided tensor functor $\C \to \Vec$.

An example of a non-Tannakian symmetric fusion category is the category $\sVec$
of super vector spaces. 

Recall from \cite{Mu2} that
objects $X$ and $Y$ of a braided fusion category $\C$ are said to
{\em centralize\,}  each other if
\begin{equation} \label{monodromy-drinf}
c_{Y,X}\circ c_{X,Y} =\id_{X\ot Y}.
\end{equation}
The {\em centralizer\,} $\D'$ of a fusion subcategory $\D\subset\C$ is defined to
be the full subcategory of objects of $\C$ that centralize each object of $\D$.
It is easy to see that $\D'$ is a fusion subcategory of $\C$.
Clearly, $\D$ is symmetric if and only if $\D\subset\D'$. The {\em symmetric center} of $\C$
is its self-centralizer $\C'$.

A  braided fusion category $\C$ is called {\em non-degenerate} if $\C'=\Vec$.
If $\C$ has a spherical structure then $\C$ is non-degenerate if and only if it is modular
\cite[Proposition 3.7]{DGNO1}.

For a fusion subcategory $\D$ of a braided fusion category $\C$ one 
has the following properties (see \cite[Corollary 3.11 and Theorem 3.14]{DGNO1}):
\begin{gather}
\label{D''} \D''=\D \vee \C' ,\\
\label{prodFP}
\FPdim(\D)\FPdim(\D')=\FPdim(\C) \FPdim(\D \cap \C').
\end{gather}

Any pointed braided fusion category comes from a {\em pre-metric group}, i.e.,  
a finite Abelian group $A$ equipped with a quadratic form $q:A \to \kk^\times$, 
see \cite[Section 3]{JS} and \cite[Section 2.11]{DGNO1}. 
Let $\C(A,\, q)$ denote the corresponding braided fusion category.

By an {\em Ising} braided fusion category we understand a non-pointed
braided fusion category of the Frobenius-Perron dimension $4$.  Such a category
has $3$ classes of simple objects of the Frobenius-Perron dimension $1$, $1$, and $\sqrt{2}$.
An Ising braided fusion category $\mathcal{I}$ is always non-degenerate and $\mathcal{I}_{pt} \cong \sVec$.
It is known that there are $8$ equivalence classes of Ising braided fusion categories.
See \cite[Appendix B]{DGNO1} for a detailed discussion  of Ising categories.

The following notion was introduced in \cite[Section 2.10]{ENO2}. It plays a crucial role in this paper.

\begin{definition}
\label{def sldeg}
A braided fusion category $\C$ is called {\em slightly degenerate} if $\C' =\sVec$.
\end{definition}

Equivalently, $\C$ is slightly degenerate if its symmetric center $\C'$ is non-trivial and 
contains no non-trivial Tannakian subcategories. 

Let $\C$ be a slightly degenerate
braided fusion category and let  $\delta$ denote the simple object generating $\C'$. 
Then $\delta \ot X \not\cong X$ for any simple $X$ in $\C$, see \cite[Lemma 5.4]{Mu1}.
If $\C$ is a pointed slightly degenerate braided fusion category then $\C \cong \C_0 \bt \sVec$,
where $\C_0$  is a non-degenerate braided fusion category, see \cite[Proposition 2.6(ii)]{ENO2}
or \cite[Corollary A.19]{DGNO1}.
 
\begin{example}
\label{slight examples}
There are several general ways to construct examples of slightly degenerate
braided fusion categories.
\begin{enumerate}
\item[(i)]
Let $\C$ be a non-degenerate braided fusion category equipped with a braided tensor functor
$\sVec \to \C$. Then the centralizer of  the image of $\sVec$ in $\C$ is a slightly degenerate
braided fusion category.
\item[(ii)] Let $\C$ be a non-degenerate braided fusion category. Then $\C \bt \sVec$
is a  slightly degenerate braided fusion category.
\end{enumerate}
\end{example}

\subsection{Drinfeld center of a fusion category} 
\label{dcenter}

For any fusion category $\A$ its {\em Drinfeld center}
$\mathcal{Z}(\A)$ is defined as the category whose objects are
pairs $(X, \gamma_X)$, where $X$ is an object of $\A$ and
$\gamma_X : V\ot X
\simeq X \ot V$, $V\in\A,$
is a natural family of isomorphisms, satisfying well known
compatibility conditions.
It is known that $\Z(\A)$ is a non-degenerate braided fusion
category, see \cite[Corollary 3.9]{DGNO1}.  We have (see 
\cite[Theorem 2.15, Proposition 8.12]{ENO1}):
\begin{equation}
\label{dimZ}
\FPdim(\Z(\A))=\FPdim(\A)^2.
\end{equation}


For a braided fusion category $\C$ there are two braided functors 
\begin{eqnarray}
\C \to \Z(\C) &:& X\mapsto (X,\, c_{-,X}),\\
\C^{\rev} \to \Z(\C) &:& X\mapsto  (X,\, \tilde c_{-,X}).
\end{eqnarray}
These functors are injective and so we can identify
$\C$ and $\C^{\rev}$ with their images in $\Z(\C)$. These images 
centralize each other, i.e., $\C' =\C^{\rev}$.  This allows
to define a braided tensor functor 
\begin{equation}
\label{Gfun}
G: \C \boxtimes \C^{\rev}\to \Z(\C).
\end{equation}
It was shown in \cite{Mu3} and \cite[Proposition 3.7]{DGNO1}
that $G$ is a braided equivalence if and only if $\C$ is
non-degenerate. 

Let $\C$ be a braided fusion category and let $\A$ be a fusion
category. Let $F: \C \to \A$ be a tensor functor. 

\begin{definition}
\label{def centfun} 
A structure of a {\em central functor}
on $F$ is a braided tensor functor $\C\to\Z(\A )$ together with isomorphism between its composition 
with the forgetful functor $\Z(\A)\to\A$ and $F$.
\end{definition}

Equivalently, a structure of central functor on $F$ is a natural family of isomorphisms
$Y\ot F(X) \xrightarrow{\sim} F(X)\ot Y$, $X\in\C$, $Y\in\A$, satisfying certain compatibility conditions,
see \cite[Section  2.1]{Be}.

\subsection{\'Etale algebras in braided fusion categories}
\label{Prelim-etale}

Here we recall definition and basic properties of \'etale algebras in braided
fusion categories, see \cite[Section 3]{DMNO} for details.

Let $\A$ be a fusion category. In this paper an {\em algebra} $A\in \A$
is an associative algebra with unit, see e.g. \cite[Definition 3.1]{O1}.
An algebra $A\in \A$ is said to be {\em separable} if the
multiplication morphism $m: A\ot A\to A$ splits as a morphism of $A$-bimodules.

Let $\C$ be a braided fusion category.
An algebra $A$ in $\C$ is called  {\em \'etale} if
it is both commutative and separable.
We say that an \'etale algebra $A\in \C$ is {\em connected} if $\dim_\kk \Hom_\C(\be,\, A)=1$.

\begin{example}
\label{regular etale}
Let $\E = \Rep(G) \subset \C$ be a Tannakian subcategory of $\C$. 
The algebra  $\mbox{Fun}(G)$ of functions on $G$  is a connected \'etale algebra in $\C$.
\end{example}

Let $A$ be an  \'etale algebra in $\C$. An \'etale algebra {\em over} $A$ is an \'etale algebra
in $\C$ containing $A$ as a subalgebra.  For a fusion subcategory $\D \subset \C$ we denote
by $A\cap \D$ the maximal \'etale subalgebra of $A$ contained in  $\D$.

There is a canonical correspondence between \'etale algebras in $\C$ and surjective central 
functors from $\C$.  Namely, let $\A$ a fusion category, and let $F: \C \to \A$
be a central functor. Let $I : \A \to \C$ denote  the right adjoint functor of $F$. Then the object
$A=I(\be)\in \C$ has a canonical structure of connected \'etale algebra, see \cite[Lemma 3.5]{DMNO}.

Conversely, let $A\in \C$ be a connected \'etale algebra. Let $\C_A$ denote the category of right 
$A$-modules in $\C$.  Then the free module functor
\[
F_A: \C \to \C_A,\quad X \mapsto X \ot A
\] 
is surjective and has a canonical structure of a central functor.  The above constructions (of \'etale
algebras from central functors and vice versa) are inverses of each other.  It was shown
in \cite[Lemma 3.11]{DMNO} that
\begin{equation}
\label{FPdimCA}
\FPdim(\C_A)=\frac{\FPdim(\C)}{\FPdim(A)}.
\end{equation}

Note that every right $A$-module $M$ can be made an $A$-bimodule in two ways, using the braiding $c$
of $\C$  and  its reverse $\tilde{c}$. We call $M$  {\em dyslectic} (or {\em local}) if two resulting bimodules are equal.
The category $\C_A^0$ of dyslectic $A$-modules has a canonical structure of a braided fusion category.
If $\C$ is non-degenerate then so is $\C_A^0$ and we have 
\begin{equation}
\label{FPdimCA0}
\FPdim(\C_A^0)=\frac{\FPdim(\C)}{\FPdim(A)^2},
\end{equation}
see \cite[Corollary 3.32]{DMNO}.

A {\em Lagrangian algebra} in a non-degenerate braided fusion category $\C$ is 
a connected  \'etale algebra $A\in \C$ such that $\FP(\C)=\FP(A)^2$, see \cite[Definition 4.6]{DMNO}. 
A Lagrangian algebra in $\C$ exists if and only if $\C \cong \Z(\A)$ for some fusion category 
$\A$. Moreover,
the isomorphism classes of  Lagrangian algebras in $\Z(\A)$  are in bijection 
with equivalence classes of  indecomposable $\A$-module categories, 
see \cite[Theorem 1.1]{KR} and \cite[Proposition 4.8]{DMNO}.

Following \cite{DMNO} we say that a braided fusion category $\C$
is {\em completely anisotro\-pic}  if the only connected \'etale algebra $A\in \C$ is  $A =\be$.  
By Example \ref{regular etale}, a completely anisotropic category 
has no Tannakian subcategories other than $\Vec$ and so is either non-degenerate
or slightly degenerate. 
By \cite[Lemma 5.12]{DMNO} any central functor from a completely anisotropic
braided tensor category is injective.

\subsection{Fusion categories over a symmetric category}
\label{Prelim-over E}

The following definition was given in \cite[Definition 4.16]{DGNO1}.

\begin{definition}  
\label{over}
Let $\E$ be a symmetric fusion category.
A {\em fusion category over $\E$}  is a fusion category $\A$
equipped with a braided tensor functor $T: \E \to \Z(\A)$.
\end{definition}

In this paper we will only consider fusion categories $\A$ over $\E$
with the property that the composition of $T$ with the forgetful functor 
$\Z(\A)\to \A$ is fully faithful. Note that this property is automatically
satisfied for  $\E=\sVec$.  

To define 
tensor functors over $\E$ we need the following auxiliary construction, generalizing the 
notion  of the Drinfeld center of a fusion category, see Section~\ref{dcenter}.

Let $F:\A\to\B$ be a tensor functor. Its {\em relative center} $\Z(F)$ is the category of pairs $(Z,z)$, where $Z$ is an object of $\B$ 
and $z_X:Z\ot F(X)\xrightarrow{\sim}  F(X)\ot Z$ is a collection of isomorphisms, natural in $X\in\A$, and such that the following diagram 
$$\xymatrix{ Z\ot F(X\ot Y) \ar[d]_{F_{X,Y}}  \ar[rr]^{z_{X\ot Y}} && F(X\ot Y)\ot Z \ar[d]^{F_{X,Y}} \\ 
Z\ot F(X)\ot F(Y) \ar[dr]^{z_X} && F(X)\ot F(Y)\ot Z\\
& F(X)\ot Z\ot F(Y) \ar[ru]^{z_Y} }$$
commutes for all $X,Y\in\A$.  
Here $F_{X,Y}:F(X\ot Y) \xrightarrow{\sim}  F(X)\ot F(Y)$ is the tensor structure of $F$.
Here and below we suppress all identity morphisms
and  associativity constraints.

The category $\Z(F)$ is tensor with respect to the tensor product given by 
\[
(Z,z)\ot(W,w) = (Z\ot W,z\ot w), 
\]
where $(z\ot w)_X$ is the composition
$$\xymatrix{Z\ot W\ot F(X) \ar[r]^{w_X} & Z\ot F(X)\ot W \ar[r]^{z_X} & F(X)\ot Z\ot W}.$$

\begin{remark}
Let $\B \subset \A$ be a fusion subcategory and $F: \B \hookrightarrow \A$ be an embedding.
Then $\Z(F)$ is the relative center of $\B$ in $\A$ \cite{Ma}. In particular,
the relative center  $\Z(\id_\A)$ of the identity functor $\id_A:\A\to\A$ is $\Z(\A)$. 
\end{remark}

Clearly, the forgetful functor $\Z(F)\to \B$ is tensor. 

There are two canonical tensor functors $\Z(\A)\to\Z(F), \Z(\B)\to\Z(F)$ which fit into a commutative diagram of tensor functors:
$$\xymatrix{\Z(\A) \ar[d] \ar[r] & \Z(F) \ar[rd] & \Z(\B) \ar[l] \ar[d] \\ \A \ar[rr]^F && \B }$$

\begin{definition}  
\label{fover}
A tensor functor $F:\A\to\B$ between fusion categories over $\E$ is called a {\em tensor functor over $\E$} if the following diagram of tensor functors commutes up to a tensor isomorphism:
$$\xymatrix{\E \ar[r]^{T_\A} \ar[d]_{T_\B} & \Z(\A) \ar[d] \\ \Z(\B) \ar[r] & \Z(F) }.$$
\end{definition}
In other words, a structure of a tensor functor over $\E$ on $F:\A\to\B$ is a tensor isomorphism $u_E:F(T_\A(E))\to T_\B(E),\, E\in\E,$ such that the diagram
$$
\xymatrix{F(T_\A(E)\ot X) \ar[d] \ar[rr]^{F_{T(E),X}} && F(T_\A(E))\ot F(X) \ar[rr]^{u_X} && T_\B(E)\ot F(X) \ar[d] \\ 
F(X\ot T_\A(E)) \ar[rr]^{F_{X,T(E)}} && F(X)\ot F(T_\A(E)) \ar[rr]^{u_X} && F(X)\ot T_\B(E) }
$$ 
commutes. Here the vertical arrows are the central structures of $T_\A(E)$ and $T_\B(E)$, respectively. 

A tensor natural transformation $a:F\to G$ between tensor functors over $\E$ is a {\em tensor natural transformation over $\E$} if the diagram
$$
\xymatrix{F(T_\A(E)) \ar[rr]^{a_{T(E)}} \ar[rd]_{u_E} && G(T_\A(E))\ar[ld]^{v_E} \\ & T_\B(E) }
$$
commutes for any $E\in\E$. Here $u_E$ and $v_E$ denote the structures of functors over $\E$ for $F$ and $G$, respectively.

Note that relative centers  of successive tensor functors $F:\A\to\B, G:\B\to\C$ are related to the 
relative center of the composition by a pair of canonical tensor functors $\Z(F)\to\Z(G\circ F), \Z(G)\to\Z(G\circ F)$ 
fitting into a commutative diagram of tensor functors:  
$$\xymatrix{ && \Z(G\circ F) \ar[dddrr]\\ & \Z(F) \ar[ru]  \ar[rdd] && \Z(G) \ar[rdd] \ar[lu] \\
\Z(\A) \ar[d] \ar[ru] && \Z(\B) \ar[d] \ar[ru] \ar[lu] && \Z(\C) \ar[d] \ar[lu] \\ \A \ar[rr]^F && \B \ar[rr]^G && \C}$$
This allows us to compose tensor functors over $\E$. In other words, the structure of tensor functor over $\E$ on the composition $G\circ F$ is given by 
$$\xymatrix{G(F(T_\A(E))) \ar[r]^{G(u_E)} & G(T_\B(E)) \ar[r]^{v_E} & T_\C(E) }$$

Thus we have a 2-category $\Fus_\E$ of fusion categories, functors and natural transformations over a symmetric fusion category $\E$. 

Let us explain  the  functoriality of $2$-categories $\Fus_\E$ with respect to braided tensor functors $\E\to\F$.
Construct a {\em base change} 2-functor
\begin{equation}
\label{base change}
-\boxtimes_\E\F:\Fus_\E\to \Fus_\F,\quad \A\mapsto \A\boxtimes_\E\F.
\end{equation}
Here the tensor product $\A\boxtimes_\E\F$  of fusion categories over $\E$  is defined as follows. 
Consider a composition of braided  tensor functors 
$$
\xymatrix{\E\boxtimes\F\ar[r] & \F\boxtimes\F \ar[r]^\ot & \F}
$$ 
where the first is given by the braided functor $\E\to\F$ and the second is the tensor product of $\F$. Denote by $R$ the right adjoint to the composition. The value $R(\be)$ on the unit object is a connected \'etale algebra in 
$\E\boxtimes\F$. Denote by $A$ its image in $\A\boxtimes\F$. Now $\A\boxtimes_\E\F$ is the category $(\A\boxtimes\F)_A$ of $A$-modules in $\A\boxtimes\F$.

Note that Deligne tensor product of fusion categories induces a 2-functor
$$\Fus_\E\times\Fus_\F\to\Fus_{\E\boxtimes\F},\quad (\A,\B)\mapsto \A\boxtimes\B.$$
Assume that $\F=\E$ and compose the above 2-functor with the base change 2-functor $$-\boxtimes_{\E\boxtimes\E}\E:\Fus_{\E\boxtimes\E}\to \Fus_\E$$ induced by the tensor product functor $\E\boxtimes\E\to\E$ (which is a braided monoidal functor). This gives a 2-functor (the {\em Deligne tensor product over $\E$})
$$\Fus_\E\times\Fus_\E\to\Fus_{\E},\quad (\A,\B)\mapsto \A\boxtimes_\E\B$$ which turns $\Fus_\E$ into a monoidal 2-category.

\begin{remark}
More explicitly, we define $\A\boxtimes_\E\B$ as the category of (right) $A$-modules $(\A\boxtimes\B)_A$, where $A$ is an \'etale algebra in $\Z(\A\boxtimes\B)$ defined as $(T_\A\boxtimes T_\B)(R(\be))$, with $R:\E\to\E\boxtimes\E$ being the right adjoint functor to the tensor product functor $\ot:\E\boxtimes\E\to\E$. 

Note that our construction gives the tensor product of fusion categories over a symmetric category defined by 
Greenough in \cite[Section 6]{G}. 
\end{remark}

The base change 2-functors preserve tensor product, i.e., are monoidal:
$$(\A\bt_\E\B)\bt_\E\F \simeq (\A\bt_\E\F)\bt_\F(\B\bt_\E\F).$$ 

\begin{definition}  
\label{brover}
A {\em braided  fusion category over $\E$}  or {\em braided fusion $\E$-category} 
is a braided fusion category $\C$ equipped with a braided tensor embedding $T: \E \to \C'$.   
An {\em $\E$-subcategory}  of a braided fusion $\E$-category is its  fusion subcategory
containing $\E$. 
\end{definition}

A braided  fusion category over $\E$ can be seen as a 
fusion category over $\E$ with a braided functor $T_\C:\E\to\Z(\C)$ defined as the composition of braided functors
$$\xymatrix{\E \ar[r] & \C' \ar[r] & \C \ar[r] & \Z(\C)}$$

Let $\C$ and $\D$ be braided 
fusion categories over $\E$. Since the functors $T_\C, T_\D$ factor through symmetric centers $\C', \D'$ respectively the algebra $A = (T_\C\boxtimes T_\D)(R(\be))$ is an object of the symmetric center $(\C\boxtimes\D)'$. Thus the category of modules $(\A\boxtimes\B)_A$ coincides with its subcategory of local modules $(\A\boxtimes\B)^0_A$ and hence is braided. In other words the Deligne tensor product $\C\boxtimes_\E\D$ of two braided fusion categories over $\E$ is a braided fusion category over $\E$.

\bre
\label{over E = Amod}
Recall from \cite[section 5]{JS} that a braiding on a fusion category $\C$ is equivalent to a monoidal structure on the tensor product functor $\ot:\C\boxtimes\C\to \C$. 
In slightly more general and abstract terms, the 2-category $\Fus^{br}$ of braided fusion categories is bi-equivalent to the 2-category of {\em pseudo-monoids} in $\Fus$. 
\newline
The above statement can be generalized to fusion categories over $\E$. It is not hard to see that the structure of a braided fusion category over $\E$ on $\C$ is equivalent to a monoidal structure on the tensor product functor $\ot:\C\boxtimes_\E\C\to \C$. 
Again reformulating it in slightly more general and abstract terms, the 2-category $\Fus^{br}_\E$ of braided fusion categories is bi-equivalent to the 2-category of pseudo-monoids in $\Fus_\E$. 

This in particular allows us to extend base change
to braided fusion categories over $\E$. More precisely, for a braided fusion
category $\C$ over a symmetric $\E$ and for a symmetric tensor functor
$\E\to\F$ the base change $\C\boxtimes_\E\F$ has a natural structure
of a braided fusion category over $\F$.
\ere

\subsection{De-equivariantization}

Let $\A$ be a fusion category and let $\E =\Rep(G)$ be a Tannakian subcategory of  $\Z(\A)$
which embeds into $\A$ via the forgetful functor $\Z(\A)\to \A$.  
Then $\A$ is a fusion category over $\E$.
The {\em de-equivariantization} of  $\A$  is the fusion category $\A \bt_\E \Vec$
obtained from $\A$ by means of the base change \eqref{base change}
corresponding to the braided fiber functor $\E \to \Vec$. 

Explicitly,   $\A \bt_\E \Vec = \Rep_\A(A)$
where $A=\mbox{Fun}(G)$ is  the regular algebra of $\E$ (viewed as
an \'etale  algebra in $\Z(\A)$), see Example~\ref{regular etale} and $\Rep_\A(A)$
is defined in \cite[Definition 3.16]{DMNO}

There is a canonical surjective 
tensor functor $\A \to \A \bt_\E \Vec$ assigning to each $X\in \A$ the free
$A$-module $X\ot A$. 

The above construction extends to braided fusion categories as follows.
Let $\E \subset \C'$ be a Tannakian subcategory.
Then $\C$ is a braided fusion category over $\E$, 
its de-equivariantization $\C \bt_\E \Vec$ is a braided fusion category, 
and the canonical central tensor functor $\C \to \C \bt_\E \Vec$ is braided.

We refer the reader to \cite[Section 4]{DGNO1} for details.

\begin{remark}
\label{tensor over E = equiv}
Tensor product of fusion categories $\A$ and $\B$ over a symmetric category $\E$ 
defined in Section~\ref{Prelim-over E} is a special case of de-equivariantization.
Indeed, let $\F$ be the Tannakian subcategory of $\E \bt \E$ such that
$(\E \bt \E) \bt_\F \Vec \cong \E$  (existence of such a subcategory  $\F$
follows from \cite[Corollary 3.22]{DMNO} since the tensor
product functor $\ot: \E\bt \E \to \E$ is braided).
Then
\[
\A\bt_\E \B  \cong (\A\bt \B) \bt_\F \Vec.
\]
The \'etale algebra $A\in \Z(\A\bt \B)$ 
constructed in Remark~\ref{over E = Amod}  identifies with the regular algebra of $\F$.
\end{remark}

\begin{proposition}
\label{Center of the de-equivariantization}
Let $\A$ be a fusion category over a Tannakian category $\E$ such that
the functor $\E \to \Z(\A)$ is an embedding.  Then
$\Z(\A \bt_\E \Vec) \cong  \E' \bt_\E \Vec$. (Here we identify $\E$
with its image in $\Z(\A)$).
\end{proposition}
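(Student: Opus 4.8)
The plan is to exhibit an explicit braided equivalence between $\Z(\A \bt_\E \Vec)$ and $\E' \bt_\E \Vec$, where $\E' $ denotes the centralizer of $\E$ inside $\Z(\A)$. Write $A = \mathrm{Fun}(G) \in \E \subset \Z(\A)$ for the regular algebra, so that by definition $\A \bt_\E \Vec = \Rep_\A(A)$, the category of $A$-modules in $\A$ (with $A$ viewed through the forgetful functor $\Z(\A) \to \A$). The key point is to relate modules over $A$ in $\A$ with modules over $A$ in $\Z(\A)$. Since $\Z(\A)$ is non-degenerate and $A$ is a connected \'etale algebra in it, we may form $\Z(\A)_A^0$, the category of local (dyslectic) $A$-modules, which is again braided and satisfies $\FPdim(\Z(\A)_A^0) = \FPdim(\Z(\A))/\FPdim(A)^2 = \FPdim(\A)^2/|G|^2$ by \eqref{dimZ} and \eqref{FPdimCA0}. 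On the other hand, since $A \in \E$ is exactly the regular algebra, $\Z(\A)_A^0$ is by construction the de-equivariantization $\Z(\A) \bt_\E \Vec = \E' \bt_\E \Vec$ (here using $\E \subset \E'$ so that $A$ is central, hence all $A$-modules are local); more precisely $\E' \bt_\E \Vec$ is the image of $\Rep_{\E'}(A)$. So the right-hand side of the claim is $\Z(\A)_A^0$, and it has Frobenius--Perron dimension $(\FPdim(\A)/|G|)^2$.

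First I would identify the left-hand side. We have $\A \bt_\E \Vec = \Rep_\A(A)$, a fusion category of dimension $\FPdim(\A)/|G|$ by \eqref{FPdimCA}, so $\FPdim(\Z(\A \bt_\E \Vec)) = (\FPdim(\A)/|G|)^2$ by \eqref{dimZ}, matching the dimension computed above. This makes the strategy clear: construct a braided tensor functor $\Phi : \Z(\A)_A^0 \to \Z(\A \bt_\E \Vec)$ and show it is injective (hence, by the matching dimensions and \cite[Proposition 2.19]{EO}, an equivalence). To build $\Phi$, note that the free-module functor $F_A : \A \to \Rep_\A(A)$ is a surjective central functor, so it induces a braided functor on relative centers; concretely, an object $(Z, \gamma) \in \Z(\A)$ with a local $A$-module structure gives an object $F_A(Z) = Z \ot A \in \Rep_\A(A)$ together with a half-braiding against all $A$-modules, the locality of $Z$ being exactly what is needed for this half-braiding to descend from $\A$-level data to $\Rep_\A(A)$-level data and to be compatible with the $A$-action. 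This is the standard fact that for a connected \'etale algebra $A$ in a braided category, local modules give rise to objects of the center of the module category; I would cite or reprove it in the form needed here (it is closely related to \cite[Corollary 3.30, Corollary 3.32]{DMNO} and the functoriality of the center under central functors).

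The main obstacle is verifying injectivity (full faithfulness) of $\Phi$, i.e.\ that $\Hom_{\Z(\A)_A^0}((Z,\gamma),(W,\delta)) \xrightarrow{\sim} \Hom_{\Z(\Rep_\A(A))}(\Phi(Z,\gamma),\Phi(W,\delta))$. One approach: compute both sides as spaces of morphisms respecting all the structure and use that $\Hom_{\Rep_\A(A)}(X \ot A, Y \ot A) \cong \Hom_\A(X, Y \ot A)$ together with the half-braiding constraints to cut this down to $\Hom$ in $\Z(\A)_A^0$; alternatively, avoid full faithfulness directly and instead verify surjectivity of $\Phi$ (every object of $\Z(\Rep_\A(A))$ is a subobject of something in the image), which may be cleaner since $F_A$ is surjective and the center construction is compatible with generation. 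Either way, once $\Phi$ is shown to be a fully faithful (or surjective) braided tensor functor between categories of equal Frobenius--Perron dimension, \cite[Proposition 2.19]{EO} upgrades it to a braided equivalence, completing the proof. A secondary point to check carefully is the identification $\E' \bt_\E \Vec \cong \Z(\A)_A^0$ at the level of braided categories (not just fusion categories), which follows from the definition of de-equivariantization in Section~\ref{dcenter} applied to the Tannakian subcategory $\E \subset \Z(\A)' = \Z(\A)$... — here one must be mindful that $\E \subset \E' $ so that de-equivariantizing $\Z(\A)$ along $\E$ lands inside $\E' \bt_\E \Vec$ and indeed equals it, since objects of $\Z(\A)$ outside $\E'$ carry non-local $A$-module structure and are killed.
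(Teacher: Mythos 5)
The paper gives no argument here at all: it simply cites \cite[Proposition 2.10]{ENO2}. Your route --- identify $\Z(\A\bt_\E\Vec)=\Z(\Rep_\A(A))$ with $\Z(\A)^0_A$ and then identify $\Z(\A)^0_A$ with $\E'\bt_\E\Vec$ --- is the standard way this result is actually proved, and the first identification need not be rebuilt by hand: it is exactly \cite[Corollary 4.5]{Sch} and \cite[Theorem 3.20]{DMNO}, which this paper itself invokes later (Proposition~\ref{ZAT0}) for the same equivalence. So the architecture of your proof is fine.

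The genuine gap is in the second identification. You write that ``$\Z(\A)_A^0$ is by construction the de-equivariantization $\Z(\A)\bt_\E\Vec=\E'\bt_\E\Vec$''; this is false as stated. The de-equivariantization $\Z(\A)\bt_\E\Vec$ is the category of \emph{all} $A$-modules $\Z(\A)_A$, which has Frobenius--Perron dimension $\FPdim(\A)^2/|G|$, is not braided, and strictly contains the local part; it is not $\E'\bt_\E\Vec$. The statement you actually need is $\Z(\A)^0_A\cong(\E')_A$, and its nontrivial half --- that every local $A$-module has underlying object in $\E'$, which you assert in the closing sentence (``objects of $\Z(\A)$ outside $\E'$ carry non-local $A$-module structure and are killed'') --- is never proved. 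Fortunately the fix is short and uses the dimension count you already made: any $A$-module whose underlying object lies in $\E'$ is automatically local (objects of $\E'$ have trivial monodromy with $A\in\E$), so $(\E')_A=\E'\bt_\E\Vec$ is a full fusion subcategory of $\Z(\A)^0_A$; since $\Z(\A)$ is non-degenerate and $\E\cap\Z(\A)'=\Vec$, formula \eqref{prodFP} gives $\FPdim(\E')=\FPdim(\A)^2/|G|$, hence $\FPdim((\E')_A)=\FPdim(\A)^2/|G|^2=\FPdim(\Z(\A)^0_A)$ by \eqref{FPdimCA0}, forcing equality $(\E')_A=\Z(\A)^0_A$; the braidings agree because both are induced from that of $\Z(\A)$. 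With this inserted, your proof is complete, and the remaining worry about full faithfulness of $\Phi$ disappears since that step is covered by the cited results.
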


Proposition~\ref{Center of the de-equivariantization}  is proved in \cite[Proposition 2.10]{ENO2}. 

\begin{proposition}
\label{De-equivariantization commutes with centralizers}
Let $\C$ be a braided fusion category, let $\E \subset \C'$
be a Tannakian subcategory, and let $\D \subset \C$ be a fusion subcategory
containing $\E$.  The braided fusion category
$\D' \bt_\E \Vec$ is equivalent to the centralizer of $\D \bt_\E \Vec$  in $\C \bt_\E \Vec$.
In other words, de-equivariantization of braided fusion categories 
commutes with taking the centralizers.
\end{proposition}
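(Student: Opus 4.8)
The plan is to make the de-equivariantization concrete: identify $\E$ with $\Rep(G)$ for a finite group $G$, let $A=\Fun(G)$ be its regular algebra regarded as a connected \'etale algebra of $\C$, and recall that $A\in\C'$. Since $\E\subseteq\D$ and $\E\subseteq\C'\subseteq\D'$, the algebra $A$ lies in both $\D$ and $\D'$. Because $A$ is central, $\C_A:=\C\bt_\E\Vec$ coincides with $\C_A^0$ and is braided, the de-equivariantization functor $F_A\colon\C\to\C_A$ is a braided central functor, and $\D\bt_\E\Vec$, $\D'\bt_\E\Vec$ are identified with the fusion subcategories $\D_A$, $(\D')_A$ of $\C_A$ consisting of those $A$-modules whose underlying object lies in $\D$, resp.\ in $\D'$. (Write $U\colon\C_A\to\C$ for the forgetful functor.) Thus the assertion reduces to the equality $(\D')_A=(\D_A)'$ of fusion subcategories of $\C_A$, the centralizer being taken in $\C_A$.

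First I would prove the easy inclusion $(\D')_A\subseteq(\D_A)'$. If $X\in\D'$ and $Y\in\D$, then $c_{Y,X}\circ c_{X,Y}=\id_{X\ot Y}$ in $\C$, and applying the braided functor $F_A$ shows that the monodromy of $F_A(X)$ and $F_A(Y)$ in $\C_A$ equals $F_A(\id_{X\ot Y})=\id$; hence $F_A(X)$ centralizes $F_A(Y)$ for every $Y\in\D$, and therefore centralizes all of $\D_A$ (centralizing is preserved under tensoring, taking subobjects and duals on the side being centralized). Since every object of $(\D')_A$ is a direct summand of $F_A(X)$ for some $X\in\D'$, and $(\D_A)'$ is a fusion subcategory, this gives $(\D')_A\subseteq(\D_A)'$.

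The reverse inclusion is the heart of the argument and rests on a cancellation lemma: if $F_A(X)$ centralizes $F_A(Y)$ in $\C_A$, then $X$ centralizes $Y$ in $\C$. Indeed, since $F_A$ is braided, the monodromy of $F_A(X)$ and $F_A(Y)$ equals $F_A(c_{Y,X}\circ c_{X,Y})=(c_{Y,X}\circ c_{X,Y})\ot\id_A$, viewed as an endomorphism of $F_A(X\ot Y)=X\ot Y\ot A$; because $A$ is connected and $\C$ is semisimple, $\be$ is a direct summand of $A$, so the hypothesis that this monodromy is the identity forces $c_{Y,X}\circ c_{X,Y}=\id_{X\ot Y}$. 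Granting the lemma, take a simple object $M\in(\D_A)'$ with underlying object $X=U(M)$. The subcategory $\D_A$ is stable under the $G$-action on $\C_A$ (it is the de-equivariantization of the $\E$-containing subcategory $\D$), and that action is by braided autoequivalences, so $(\D_A)'$ is $G$-stable as well; combined with the standard identification $F_A(X)=F_A(U(M))\cong\bigoplus_{g\in G}g(M)$, this shows $F_A(X)\in(\D_A)'$. Hence $F_A(X)$ centralizes $F_A(Y)$ for all $Y\in\D$, so by the cancellation lemma $X$ centralizes every object of $\D$, i.e.\ $X\in\D'$ and $M\in(\D')_A$. Combining the two inclusions yields $(\D')_A=(\D_A)'$, which is the claim.

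The main obstacle is the cancellation lemma, and more precisely the bookkeeping of how the braiding of $\C_A$ is induced from that of $\C$ once $A$ is central; the remaining ingredients — the decomposition $F_A(U(M))\cong\bigoplus_{g\in G}g(M)$ and the fact that the $G$-action on the de-equivariantization of a braided category is by braided autoequivalences — are standard properties of de-equivariantization, see \cite[Section 4]{DGNO1} and \cite{ENO2}. Note that no non-degeneracy hypothesis on $\C$ is needed anywhere.
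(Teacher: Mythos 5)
Your argument is correct. Note, however, that the paper does not prove this proposition at all: it simply cites \cite[Proposition 4.30]{DGNO1}, so there is no in-paper proof to match; what you have written is a self-contained replacement for that citation. Your two inclusions are sound: the easy one uses only that $F_A$ is braided (which the paper itself records for $\E\subset\C'$) together with surjectivity of the free-module functor and the fact that centralizers are fusion subcategories closed under summands; the reverse one rests on your cancellation lemma, which works exactly as you say because $F_A(f)=f\ot\id_A$ and $\be$ splits off $A$ in a semisimple category, so $-\ot A$ is faithful, and on the two standard de-equivariantization facts $F_A\circ U\cong\bigoplus_{g\in G}g(-)$ and that $G$ acts by braided autoequivalences preserving $\D_A$ (hence $(\D_A)'$); both are indeed in \cite[Section 4]{DGNO1}. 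One could even shortcut the $G$-stability step by observing that $g(M)$ has the same underlying object as $M$, but your route is fine. The trade-off versus the paper's citation is the expected one: you re-derive inside the proof some of the de-equivariantization machinery that \cite{DGNO1} develops systematically, in exchange for an argument that is elementary, avoids any non-degeneracy hypothesis (consistent with the statement), and makes transparent exactly which properties of $A=\mbox{Fun}(G)$ are used, namely $A\in\C'\cap\E$ and faithfulness of $-\ot A$.
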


Proposition  \ref{De-equivariantization commutes with centralizers}
is proved in  \cite[Proposition 4.30]{DGNO1}.

\subsection{The Witt group of non-degenerate braided fusion categories}
\label{Prelim-Witt}

The {\em Witt group}  $\W$ of non-degenerate braided fusion categories was defined in \cite{DMNO}. 

Two non-degenerate braided fusion categories $\C_1$ and $\C_2$ are {\em Witt equivalent}
if there exist fusion categories $\A_1$ and $\A_2$ such that  $\C_1\bt \Z(\A_1)\cong
\C_2\bt \Z(\A_2)$.  The elements of $\W$ are Witt equivalence classes of non-degenerate braided 
fusion categories.   The group operation of $\W$ is given by the Deligne tensor product $\bt$.
Let $[\C]$ denote the Witt equivalence class containing category $\C$.  The unit object of $\W$ is $[\Vec]$
and the inverse of $[\C]$ is $[\C^\rev]$. 

The following results were established in \cite[Section 5]{DMNO}.
For any \'etale algebra $A\in \C$ we have $[\C]=[\C_A^0]$. The Witt class of $\C$ is trivial, i.e.,  
$[\C]=[\Vec]$, if and only if $\C \cong \Z(\A)$ for some fusion category $\A$.  Every Witt class
contains a unique (up to equivalence) completely anisotropic representative. 

The Witt group $\W$ contains the  subgroup $\W_{pt}$ of the Witt  classes of non-degenerate pointed braided
fusion categories  \cite[Section 5.3]{DMNO}. The latter coincides with the classical Witt
group of metric groups (i.e., finite abelian groups equipped with a non-degenerate
quadratic form). The group $\W_{pt}$ is explicitly known, see e.g., \cite[Appendix A.7]{DGNO1}. Namely, 
\begin{equation}
\label{Wpt}
\W_{pt}=\bigoplus_{p\text{ is prime}}\, \W_{pt}(p),
\end{equation}
where $\W_{pt}(p)\subset \W_{pt}$ consists of the classes of metric $p-$groups. 

The group $\W_{pt}(2)$ is isomorphic to $\mathbb{Z}/8\mathbb{Z} \oplus
\mathbb{Z}/2\mathbb{Z}$; it is generated by two classes $[\C(\mathbb{Z}/2\mathbb{Z},\,q_1)]$ and $[\C(\mathbb{Z}/4\mathbb{Z},\,q_2)]$, where 
$q_1,\, q_2$ are any non-degenerate forms. For $p\equiv 3 \,(\mathrm{mod}\,4)$ we have $\W_{pt}(p)\cong \mathbb{Z}/4\mathbb{Z}$ 
and the class $[\C(\mathbb{Z}/p\mathbb{Z},\,q)]$ is a generator for any non-degenerate form $q$. For 
$p\equiv 1 \,(\mathrm{mod}\, 4)$
the group $\W_{pt}(p)$ is isomorphic to $\mathbb{Z}/2\mathbb{Z} \oplus \mathbb{Z}/2\mathbb{Z}$;  it is generated by the two classes 
$[\C(\mathbb{Z}/p\mathbb{Z},\,q')]$ and $[\C(\mathbb{Z}/p\mathbb{Z},\,q'')]$  with $q'(l)=\zeta^{l^2}$ and $q''(l)=\zeta^{nl^2}$, where
$\zeta$ is a primitive $p$th root of unity in $\kk$ and 
$n$ is any quadratic non-residue modulo $p$.

It was also explained in  \cite[Section 6.4]{DMNO} that $\W$ contains a cyclic subgroup $\W_{Ising}$ of order $16$
generated by the Witt classes of Ising braided fusion categories. This group can explicitly be described 
as follows. For every Ising braided category $\mathcal{I}$ the class $[\mathcal{I}]$ is a generator of $\W_{Ising}$.
The unique index $2$ subgroup  of $\W_{Ising}$  consists of the Witt classes of categories
$\C(A, \, q)$, where $(A,q)$ is a metric group of order 4  such that there exists $u\in A$ with $q(u)=-1$
(cf. \cite[\S A.3.2]{DGNO1}).

\subsection{Tensor categories from affine Lie algebras}
\label{Prelim-affine}

Let $\g$ be a finite dimensional simple Lie algebra and let $\hat \g$ be the
corresponding affine Lie algebra. For any $k\in \mathbb{Z}_{>0}$ let $\C(\g,k)$ be the
category of highest weight integrable $\hat \g-$modules of level $k$, see e.g. \cite[Section~ 7.1]{BaKi} 
where this category is denoted $\O_k^{int}$. The category $\C(\g,k)$ can be identified with
the category $\Rep(V(\g,k))$, where $V(\g,k)$ is the simple
vertex operator algebra (VOA) associated with the vacuum 
$\hat \g-$module of level $k$, {\em the affine VOA}. Thus the category $\C(\g,k)$ has a structure of 
modular tensor category, see \cite{HuL}, \cite[Chapter 7]{BaKi}. In particular the category $\C(\g,k)$ is braided and non-degenerate. 

\begin{example} The category $\C(sl(n),1)$ is pointed. 
It identifies with $\C(\bZ/n\bZ,q)$ where $q(l)=e^{\pi il^2\frac{n-1}n}$, $l\in \bZ/n\bZ$.
More generally, for a simply laced $\g$ the category $\C(\g,1)$ is pointed \cite{fk}. 
\end{example}

The following formula for the central
charge is very useful, see e.g. \cite[7.4.5]{BaKi}:
\begin{equation} \label{cformula}
c(\C(\g,k))=\frac{k\dim \g}{k+h^\vee},
\end{equation}
where $h^\vee$ is the dual Coxeter number of the Lie algebra $\g$.

Here we collect some basic facts about categories $\C(sl(2),\, k)$.

Simple objects $[j]$ of $\C(sl(2),\, k)$ are labelled by integers $j=0,...,k$. The decomposition of tensor products of simple objects  is given by
$$[i]\ot[j] = \left\{\begin{array}{cc}\bigoplus\displaylimits^{\min(i,j)}_{s=0} [i+j-2s], & i+j<k\\ \\ \bigoplus\displaylimits^{\min(i,j)}_{s=i+j-k} [i+j-2s], & i+j\geq k\end{array}\right.$$
There is a canonical ribbon structure on $\C(sl(2),\, k)$ :
$$\theta_{[j]} = e^{2\pi i \frac{j(j+2)}{4(k+2)}}\id_{[j]},$$
which gives the square of the braiding:
$$c_{[j],[t]}c_{[t],[j]} = \bigoplus_{s} e^{2\pi (h_{t+j-2s} - h_t-h_j)}\id_{[t+j-2s]},$$ where $h_j = \frac{j(j+2)}{4(k+2)}$.
The Frobenius-Perron dimensions of simple objects 
are
$$\FPdim([j]) = \frac{q^{j+1}-q^{-j-1}}{q-q^{-1}},\quad q = e^\frac{\pi i}{k+2}.$$
The  Frobenius-Perron dimension of of $\C(sl(2),\, k)$ is 
$$\FPdim(\C(sl(2),\, k)) = \frac{k+2}{2\sin^2(\frac{\pi}{k+2})}.$$
The multiplicative central charge of $\C(sl(2),\, k)$ is $$\xi(\C(sl(2),\, k)) = e^{2\pi i\frac{3k}{8(k+2)}}.$$
\end{section}


\begin{section}{\'Etale algebras in tensor products of braided fusion categories}
\label{Etale algebras in products}

Let $\C$ be a braided fusion category.

\subsection{\'Etale subalgebras}

\begin{lemma} \label{et0}
Let $A$ be an \'etale algebra in $\C$.
There is a bijection between \'etale algebras over $A$ and \'etale algebras in $\C_A^0$.
\end{lemma}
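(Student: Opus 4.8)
The plan is to exhibit mutually inverse constructions between the set of \'etale algebras over $A$ in $\C$ and the set of \'etale algebras in the braided fusion category $\C_A^0$ of dyslectic $A$-modules, using the free module functor $F_A : \C \to \C_A$ as the main tool. First I would recall that an \'etale algebra $B$ over $A$ is, by definition, an \'etale algebra in $\C$ equipped with an algebra embedding $A \hookrightarrow B$; since $A$ is connected, such $B$ becomes an $A$-module via multiplication, and in fact a commutative algebra object in $\C_A$ with respect to the relative tensor product $\ot_A$. The key point to check is that commutativity of $B$ over $A$ (in the sense of the braiding of $\C$) forces $B$, viewed as an $A$-module, to be dyslectic: the two $A$-bimodule structures on $B$ coming from $c$ and $\tilde c$ coincide precisely because $A$ lies in the ``mutual centralizer'' of $B$ with itself, which is automatic once $B$ is commutative and contains $A$. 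Hence $B \in \C_A^0$, and one verifies that the induced multiplication $B \ot_A B \to B$ makes $B$ a connected \'etale algebra in $\C_A^0$ (separability over $A$ follows from separability over $\be$ by applying the functor $- \ot_A -$, or directly from the splitting of $m_B$ as a bimodule map descending to an $A$-bimodule splitting).

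Conversely, given an \'etale algebra $\mathcal{B}$ in $\C_A^0$, I would apply the forgetful (restriction of scalars) functor $\C_A^0 \to \C_A \to \C$, which is lax monoidal in an appropriate sense: the image of $\mathcal{B}$ in $\C$ is an algebra $B$, with multiplication $B \ot B \to B \ot_A B \xrightarrow{m_{\mathcal{B}}} B$, and it contains $A = F_A(\be)$'s underlying object $\be \ot A = A$ as a subalgebra via the unit of $\mathcal{B}$. Commutativity of $B$ in $\C$ follows from commutativity of $\mathcal{B}$ in $\C_A^0$ together with the fact that $\mathcal{B}$ is dyslectic (this is exactly where locality is needed — without it the restriction of a commutative algebra in $\C_A$ need not be commutative in $\C$). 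Separability of $B$ in $\C$: the $A$-bimodule splitting of $m_{\mathcal{B}}$ can be composed with the canonical splitting data relating $B\ot B$ and $B \ot_A B$, using that $A$ itself is separable, to produce a $B$-bimodule splitting of $B \ot B \to B$ in $\C$.

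The two constructions are visibly inverse on underlying objects and on structure maps: starting from $B$ over $A$, passing to $\C_A^0$ and back recovers $B$ with its original multiplication and embedding of $A$, because $B \ot_A B$ is a quotient of $B \ot B$ and the multiplication factors through it; starting from $\mathcal{B} \in \C_A^0$ and going back, the $A$-module structure and relative multiplication are recovered since $B$ as an $A$-module is just $\mathcal{B}$ and $B \ot_A B \to B$ is $m_{\mathcal{B}}$. Connectedness is preserved in both directions: $\dim_\kk \Hom_{\C_A^0}(A, \mathcal{B}) = \dim_\kk \Hom_{\C_A}(A,B) = \dim_\kk \Hom_\C(\be, B)$ by the free-module adjunction, so ``connected over $A$'' matches ``connected in $\C_A^0$''. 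I expect the main obstacle to be the careful bookkeeping of the separability condition under restriction of scalars — verifying that an $A$-bimodule (resp.\ $\mathcal{B}$-bimodule in $\C_A^0$) splitting of the multiplication genuinely descends to, or lifts from, a $\C$-bimodule splitting, which uses separability of $A$ in an essential but slightly delicate way; everything else is a routine unwinding of definitions, and one may also simply cite the correspondence between \'etale algebras and surjective central functors from Section~\ref{Prelim-etale} to organize the argument, since \'etale algebras over $A$ correspond to surjective central functors out of $\C_A$ factoring the free module functor, which in turn correspond (after restricting to local modules) to \'etale algebras in $\C_A^0$.
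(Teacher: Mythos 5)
Your proposal is correct: it reconstructs the standard argument (commutativity of $B\supset A$ forces the $A$-module $B$ to be dyslectic, locality is what makes the restricted multiplication commutative in the other direction, and separability transfers both ways using separability of $A$), which is exactly the content of the references the paper relies on. The paper itself gives no independent proof of Lemma~\ref{et0} but simply cites \cite[Proposition 3.16]{DMNO} (see also \cite[Lemma 4.13]{FFRS}, \cite[Proposition 2.3.3]{da}), so your sketch matches the proof found there rather than diverging from it.
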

\begin{proof}
This statement was proved in \cite[Proposition 3.16]{DMNO}.
See also \cite[Lemma 4.13]{FFRS} and \cite[Proposition 2.3.3]{da}.
\end{proof}

\begin{lemma} 
\label{Lemma 3.2}
Let $A\in \C$ be a separable algebra and let $\D \subset \C$ be a fusion subcategory.
Then $A\cap \D$ is separable.
\end{lemma}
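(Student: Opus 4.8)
The plan is to reduce separability of $B:=A\cap\D$ to the semisimplicity of the category $\D_B$ of right $B$-modules in $\D$. Recall first that $A\cap\D$ denotes the largest subobject $\bigoplus_{X\in\O(\D)}X\ot\Hom_\C(X,A)$ of $A$ lying in $\D$; since $\D$ is closed under tensor products and summands and contains $\be$, the restriction of $m$ to this subobject and the unit $\be\to A$ factor through it, so $B$ is a unital subalgebra of $A$. Recall also the standard characterization: an algebra $B$ in a fusion category over $\kk$ is separable if and only if its category of modules is semisimple — a $B$-bimodule section of $m_B$ realizes every $B$-module as a summand of a free module $X\ot B$ (and free modules are projective because $\Hom(X\ot B,-)\cong\Hom_\C(X,-)$ is exact), while the converse, a characteristic-zero statement, is part of the theory of separable/exact module categories over fusion categories (see \cite{DMNO}, \cite{EO}). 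So it suffices to prove that $\D_B$ is semisimple, knowing that $\C_A$ is semisimple because $A$ is separable.

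The structural input is the decomposition $A=B\oplus A_{\D^c}$ as a $B$-bimodule, where $A_{\D^c}$ is the complementary summand of $B$ and $\D^c$ is the additive subcategory of $\C$ generated by the simple objects not in $\D$. This rests on the elementary fact that $\D\ot\D^c\subseteq\D^c$ and $\D^c\ot\D\subseteq\D^c$: if $W$ is a simple summand of $Z\ot Y$ with $Z\in\O(\D)$ and $Y\notin\O(\D)$, then $Y$ is a summand of $Z^*\ot W$, so $W\in\D$ would force $Y\in\D$. Consequently the images under $m$ of $B\ot A_{\D^c}$ and of $A_{\D^c}\ot B$ lie in $A_{\D^c}$ (morphisms from an object of $\D^c$ to an object of $\D$ vanish), so $A_{\D^c}$ is a sub-$B$-bimodule of $A$; likewise, for any right $B$-module $N$ in $\C$ the largest subobject $N_\D$ lying in $\D$ is a $B$-submodule and a $B$-module direct summand of $N$, and $N\mapsto N_\D$ is an exact functor $\C_B\to\D_B$ (a morphism of $B$-modules is block-diagonal for the $\D/\D^c$ decomposition of underlying objects).

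Next I would build a retraction of $\C_A$ onto $\D_B$. Let $S\colon\D_B\to\C_A$ be $M\mapsto M\ot_B A$, which is right exact, and let $R\colon\C_A\to\D_B$ send an $A$-module $N$ to $(N|_B)_\D$, the $\D$-part of the restriction of $N$ to a $B$-module; $R$ is exact by the previous paragraph. Using $A=B\oplus A_{\D^c}$ one gets $S(M)|_B\cong M\oplus(M\ot_B A_{\D^c})$ as $B$-modules, where $M\ot_B A_{\D^c}$ is a quotient of $M\ot A_{\D^c}\in\D^c$, hence lies in $\D^c$; therefore $R\circ S\cong\id_{\D_B}$, naturally. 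Now, since $\C_A$ is semisimple every object of $\C_A$ is projective. Given a short exact sequence $0\to K\to M\xrightarrow{p}N\to 0$ in $\D_B$, the right exact functor $S$ turns $p$ into an epimorphism onto the projective object $S(N)$, which therefore admits a section; applying the exact functor $R$ and transporting along the natural isomorphism $R\circ S\cong\id$ (under which $R S(p)$ corresponds to $p$) yields a section of $p$. Hence $\D_B$ is semisimple, and by the characterization recalled above $B=A\cap\D$ is separable.

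I expect the main difficulty to be the bookkeeping around the $\D/\D^c$ decomposition — verifying that $A\cong B\oplus A_{\D^c}$ as a $B$-bimodule and that $N\mapsto N_\D$ is exact on $B$-modules. It should be stressed that the naive approach fails: transporting a separability section $e\in\Hom_{{}_A\C_A}(A,A\ot A)$ of $m$ to $(\pi\ot\pi)\circ e\circ\iota$, where $\iota\colon B\hookrightarrow A$ and $\pi\colon A\to B$ are the inclusion and projection, does not give a section of $m_B$, because this would force $\pi\circ m$ to vanish on $A_{\D^c}\ot A$, whereas $A_{\D^c}\ot A$ is in general not contained in $\D^c$. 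This is exactly why the argument is routed through module categories.
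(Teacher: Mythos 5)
Your proof is correct, and it reaches the paper's conclusion by a more hands-on route. The paper's own argument is a two-line appeal to the theory of module categories: it views $\C_A$ as a module category over $\D$, observes that the internal Hom satisfies $\uHom_\D(A,A)=A\cap\D$, and then quotes Ostrik's theorem from \cite{O1} to identify $\D_{A\cap\D}$ with the $\D$-module subcategory of $\C_A$ generated by $A$, which is semisimple because $A$ is separable; separability of $A\cap\D$ then follows from the same semisimplicity criterion you invoke at the end. Your argument shares the core reduction (separable $\Leftrightarrow$ semisimple module category, and comparison of $\D_B$ with the semisimple $\C_A$), but instead of citing \cite{O1} you reprove the relevant piece by hand: the $B$-bimodule splitting $A=B\oplus A_{\D^c}$, the functors $S=-\ot_B A$ and $R=(\,\cdot\,|_B)_\D$ with $R\circ S\cong\id$, and the transport of projectivity. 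Note that $S$ is exactly the functor underlying Ostrik's equivalence and $R$ is its right adjoint ($\Hom_{\C_A}(M\ot_B A,N)\cong\Hom_\C(M,N)\cong\Hom_\D(M,(N|_B)_\D)$ for $M\in\D$), so you have effectively verified the needed fragment of \cite[Section 3.3]{O1} directly; what the paper's route buys is brevity, while yours buys self-containedness and makes explicit a point the paper leaves implicit, namely that the $\D$-part of $A$ is a unital subalgebra and that it is this object which the internal Hom computes. Two small points to keep in mind: the step ``every epimorphism in $\D_B$ splits, hence $\D_B$ is semisimple'' uses that objects of $\D_B$ have finite length (true here since underlying objects lie in a fusion category), and the final implication ``$\D_B$ semisimple $\Rightarrow B$ separable'' is the characteristic-zero equivalence from \cite{DMNO}, which is also what the paper uses tacitly, so your reliance on it is no additional assumption.
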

\begin{proof} Let us consider the category $\C_A$ as a module category over $\D$. It is clear
from definitions that $\underline{\Hom}_\D(A,A)=A\cap \D$, where $\underline{\Hom}_\D$
denotes the internal Hom in $\D$. 
Hence, the category $\D_{A\cap \D}$ coincides with $\D-$module subcategory of $\C_A$ generated by $A\in \C_A$, see \cite[Section 3.3]{O1}; in particular this category is semisimple. This completes the proof.
\end{proof}

\begin{corollary} \label{etsub}
Assume $\C$ is a braided fusion category. 
Let $A\in \C$ be an \'etale algebra and let $\D \subset \C$ be a fusion subcategory.
Then $A\cap \D$ is \'etale. $\square$
\end{corollary}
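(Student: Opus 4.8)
The plan is to reduce Corollary~\ref{etsub} to the two defining properties of an \'etale algebra. Separability of $A\cap\D$ is already provided by Lemma~\ref{Lemma 3.2}, so the only thing left to check is that $A\cap\D$ is a commutative algebra; for this I would show that $A\cap\D$ carries an algebra structure inherited from $A$ as a subalgebra, and then invoke the fact that the inclusion $\D\hookrightarrow\C$ is a braided tensor functor.

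Concretely, recall (as used in the proof of Lemma~\ref{Lemma 3.2}) that $A\cap\D$ is the largest subobject of $A$ lying in $\D$, equivalently the image of $\underline{\Hom}_\D(A,A)$ in $A$. Since $\be\in\D$, the unit $\be\to A$ factors through $A\cap\D$. Since $\D$ is a tensor subcategory closed under direct summands, $(A\cap\D)\ot(A\cap\D)$ is an object of $\D$, so the composite $(A\cap\D)\ot(A\cap\D)\hookrightarrow A\ot A\xrightarrow{m}A$ has image contained in the largest subobject of $A$ that belongs to $\D$, i.e.\ in $A\cap\D$. This produces a multiplication $m'$ on $A\cap\D$ for which $A\cap\D\hookrightarrow A$ is a morphism of algebras, and associativity and unitality of $m'$ follow from those of $m$.

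For commutativity, the braiding $c$ of $\C$ restricts to the braiding of $\D$ on objects of $\D$, so $m'\circ c_{A\cap\D,\,A\cap\D}$ and $m\circ c_{A,A}$ agree after composition with $A\cap\D\hookrightarrow A$; since $A$ is commutative we get $m'\circ c_{A\cap\D,\,A\cap\D}=m'$. Hence $A\cap\D$ is commutative, and being separable by Lemma~\ref{Lemma 3.2}, it is \'etale. The only point needing a little care --- and the closest thing to an obstacle --- is the claim that $m$ restricted to $(A\cap\D)\ot(A\cap\D)$ lands inside $A\cap\D$ and not merely inside $A$; this is precisely where one uses the maximality in the definition of $A\cap\D$ together with $\D$ being closed under tensor products. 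Everything else is a formal consequence of the corresponding properties of $A$ and of $\D\hookrightarrow\C$ being a braided embedding.
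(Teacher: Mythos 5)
Your proposal is correct and is essentially the argument the paper intends: the corollary is stated with no separate proof because separability is exactly Lemma~\ref{Lemma 3.2}, and commutativity of $A\cap\D$ (as the maximal subobject of $A$ lying in $\D$, which is a subalgebra since $\D$ is closed under subquotients and inherits the braiding of $\C$) is regarded as immediate. You have simply written out these routine verifications explicitly, so there is nothing to correct.
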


\begin{remark} An interesting open question is whether {\em any} subalgebra of an \'etale algebra is \'etale. 
\end{remark}

\subsection{Classification of \'etale algebras in tensor products}

Let $\C$ be a braided fusion category.  Let $G: \C \bt \C^{\rev} \to \Z(\C)$
be the canonical braided tensor functor, see \eqref{Gfun}. 
The tensor product functor $\ot: \C \bt \C^{\rev} \to \C$  factors through $G$ and so
has a natural structure of central functor. 
Let $I_\ot :\C \to \C \bt \C^{\rev}$ be the right adjoint functor of $\ot$. 
The object $I_\ot (\be)\in \C \bt \C^{\rev}$ has a structure of \'etale algebra, see Section~\ref{Prelim-etale}.

It is easy to compute that $I_\ot (\be)=\bigoplus_{X\in \O(\C)}X^*\bt X$. Thus, 
for a braided fusion category $\C$ the object $\bigoplus_{X\in \O(\C)}X^*\bt X\in \C \bt \C^{\rev}$ has
a canonical structure of connected \'etale algebra.

We now generalize this construction as follows. Let $\C$ and $\D$ be two braided fusion categories and
let $A_1\in \C$ and $A_2\in \D$ be two connected \'etale algebras. Let $\C_1 \subset \C^0_{A_1}$ and 
$\D_1\subset \D^0_{A_2}$ be two fusion subcategories, and let $\phi: \C_1^{\rev}\simeq \D_1$
be a braided equivalence. Using tensor functors 
\[
\C_1 \bt \C_1^{\rev}\xrightarrow{\id \bt \phi} \C_1 \bt \D_1\hookrightarrow
\C^0_{A_1}\bt \D^0_{A_2}=(\C \bt \D)^0_{A_1\bt A_2}
\] 
and Lemma \ref{et0} we can consider the algebra 
$I_\ot(\be)\in \C_1\bt \C_1^{\rev}$ as an \'etale algebra over $A_1\bt A_2$ in $\C \bt \D$. We will call this algebra 
$A(A_1, A_2, \C_1, \D_1, \phi)$.

\begin{lemma} 
\label{prdim}
We have 
\[
\FP(A(A_1, A_2, \C_1, \D_1,\phi))=\FP(A_1)\FP(A_2)\FP(\C_1).
\]
\end{lemma}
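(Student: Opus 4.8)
The plan is to compute the Frobenius-Perron dimension by tracking it through the chain of constructions that defines $A(A_1, A_2, \C_1, \D_1, \phi)$. Recall that this algebra is obtained by taking the canonical connected \'etale algebra $I_\ot(\be) = \bigoplus_{X \in \O(\C_1)} X^* \bt X \in \C_1 \bt \C_1^{\rev}$, transporting it along the fully faithful braided functor $\C_1 \bt \C_1^{\rev} \xrightarrow{\id \bt \phi} \C_1 \bt \D_1 \hookrightarrow (\C \bt \D)^0_{A_1 \bt A_2}$, and then using Lemma~\ref{et0} to view it as an \'etale algebra \emph{over} $A_1 \bt A_2$ in $\C \bt \D$. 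So there are two dimension contributions to combine: the size of $I_\ot(\be)$ inside $\C_1 \bt \C_1^{\rev}$, and the factor coming from passing from an algebra in the category of local modules back to an algebra in the ambient category $\C \bt \D$.

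First I would record that $\FP\big(I_\ot(\be)\big) = \sum_{X \in \O(\C_1)} \FP(X^* \bt X) = \sum_{X\in\O(\C_1)} \FP(X)^2 = \FP(\C_1)$, using that $\FP(X^*)=\FP(X)$ and the definition \eqref{FPdim def} of $\FP(\C_1)$; here I also use that $\FP$ is multiplicative under $\bt$ and that the embeddings $\id\bt\phi$ and $\C_1\bt\D_1\hookrightarrow \C^0_{A_1}\bt\D^0_{A_2}$ are fully faithful tensor functors, hence preserve Frobenius-Perron dimensions of objects. Second, I would invoke the key compatibility of $\FP$ with the module-category construction under Lemma~\ref{et0}: if $B$ is an \'etale algebra over a connected \'etale algebra $A$ in a braided fusion category $\C$, corresponding under Lemma~\ref{et0} to an \'etale algebra $\bar B \in \C_A^0$, then $\FP_\C(B) = \FP_A \cdot \FP_{\C_A^0}(\bar B)$, where $\FP_{\C_A^0}(\bar B)$ is computed using the Frobenius-Perron dimension function intrinsic to the braided fusion category $\C_A^0$. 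This is essentially the statement that the forgetful functor $\C_A^0 \to \C$ multiplies Frobenius-Perron dimensions by $\FP(A)$ on the relevant objects, which follows from \eqref{FPdimCA}, \eqref{FPdimCA0}, and the compatibility of internal Homs / free module functors with $\FP$ as developed in \cite{DMNO}.

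Putting these together with $A = A_1 \bt A_2$ (so $\FP(A) = \FP(A_1)\FP(A_2)$, again by multiplicativity of $\FP$ under $\bt$) and $\bar B = I_\ot(\be)$ regarded inside $(\C\bt\D)^0_{A_1\bt A_2}$, we get
\[
\FP\big(A(A_1, A_2, \C_1, \D_1, \phi)\big) = \FP(A_1)\FP(A_2)\cdot\FP\big(I_\ot(\be)\big) = \FP(A_1)\FP(A_2)\FP(\C_1),
\]
as claimed. The one point that deserves care — and which I expect to be the main obstacle — is verifying that the Frobenius-Perron dimension of $I_\ot(\be)$, when it is re-interpreted as an object of the \emph{larger} category $(\C\bt\D)^0_{A_1\bt A_2}$ of local modules rather than as an object of $\C_1\bt\C_1^{\rev}$ directly, is still $\FP(\C_1)$ with respect to the intrinsic $\FP$ of that module category; this requires knowing that the chain of fusion subcategory embeddings $\C_1\bt\D_1 \hookrightarrow \C^0_{A_1}\bt\D^0_{A_2}$ and the identification $\C^0_{A_1}\bt\D^0_{A_2} = (\C\bt\D)^0_{A_1\bt A_2}$ both preserve Frobenius-Perron dimensions of objects, which holds because a fully faithful tensor functor between fusion categories is a $\FP$-preserving ring homomorphism on Grothendieck rings. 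Once that bookkeeping is in place, the computation is just multiplicativity of $\FP$ across the three tensor factors.
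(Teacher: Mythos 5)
Your proposal is correct and follows essentially the same route as the paper: the paper computes $\FP(A(A_1,A_2,\C_1,\D_1,\phi))=\sum_{X\in\O(\C_1)}\FP_\C(X^*)\FP_\D(\phi(X))$ using exactly the key fact you isolate, namely that passing from a module over an \'etale algebra to its underlying object scales Frobenius--Perron dimension by $\FP(A)$. The only caveat is that this object-wise scaling is not a formal consequence of the category-level formulas \eqref{FPdimCA} and \eqref{FPdimCA0}; the paper deduces it from \cite[Proposition 8.7]{ENO1}, and you should cite that (or an equivalent object-level statement) rather than the global dimension counts.
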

\begin{proof} For an object $M\in \C_{A_1}$ we have two possible Frobenius-Perron dimensions:
$\FP(M)$ where $M$ is considered as an object of fusion category $\C_{A_1}$ and $\FP_{\C}(M)$ where 
$M$ is considered as an object of fusion category $\C$. It is a straightforward consequence of
\cite[Proposition 8.7]{ENO1} that 
\[
\FP_{\C}(M)=\FP(M)\FP(A_1). 
\]
We have:
\begin{eqnarray*}
\FP(A(A_1, A_2, \C_1, \D_1,\phi))
&=& \sum_{X\in \O(\C_1)}\FP_\C(X^*)\FP_\D(\phi(X)) \\
&=& \sum_{X\in \O(\C_1)}\FP(A_1)\FP(X)^2\FP(A_2) \\
&=& \FP(A_1)\FP(A_2)\FP(\C_1).
\end{eqnarray*}
\end{proof}

Here is the main result of this section:

\begin{theorem} 
\label{etale in CD}
The algebras $A(A_1, A_2, \C_1, \D_1,\phi)$ are pairwise non-isomorphic 
and any connected \'etale algebra in $\C \bt \D$ is isomorphic to one of them.
\end{theorem}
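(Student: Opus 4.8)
The plan is to reduce the classification to two already-known inputs: the correspondence between connected \'etale algebras and surjective central functors (Section~\ref{Prelim-etale}), and the structure of Lagrangian algebras / \'etale algebras in Drinfeld centers. Concretely, let $B\in \C\bt\D$ be an arbitrary connected \'etale algebra. First I would form $A_1 := B\cap\C$ and $A_2:= B\cap\D$ (viewing $\C,\D$ as subcategories of $\C\bt\D$ via $X\mapsto X\bt\be$ and $\be\bt Y$); by Corollary~\ref{etsub} these are connected \'etale algebras in $\C$ and $\D$ respectively. Passing to local modules via Lemma~\ref{et0}, $B$ becomes a connected \'etale algebra $\bar B$ in $(\C\bt\D)^0_{A_1\bt A_2}=\C^0_{A_1}\bt\D^0_{A_2}$ with the property that $\bar B\cap\C^0_{A_1}=\be=\bar B\cap\D^0_{A_2}$. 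So it suffices to prove the theorem in the case $A_1=A_2=\be$, i.e.\ to show that a connected \'etale algebra $B\in\C\bt\D$ with trivial intersection with both tensor factors is of the form $A(\be,\be,\C_1,\D_1,\phi)=I_\ot(\be)$ for a braided equivalence $\phi:\C_1^{\rev}\simeq\D_1$ between fusion subcategories $\C_1\subset\C$, $\D_1\subset\D$.

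For the core case I would use the central-functor dictionary: $B$ corresponds to a surjective central functor $F:\C\bt\D\to\A$ for some fusion category $\A$, with $B=I(\be)$, $I$ the right adjoint of $F$. Restricting $F$ along the two embeddings gives central functors $F_\C:\C\to\A$ and $F_\D:\D\to\A$; the right adjoints produce the algebras $I(\be)\cap\C = A_1 = \be$ and $A_2=\be$, which forces $F_\C$ and $F_\D$ to be \emph{injective} (fully faithful) — their "kernel" algebras are trivial. Let $\C_1\subset\A$ and $\D_1\subset\A$ be the (replete) images; these are fusion subcategories, and because the central structures on $F_\C$ and $F_\D$ are compatible with the braidings of $\C$ and $\D^{\rev}\hookrightarrow\Z(\C\bt\D)$... more precisely, since $F$ is central and $\C$ and $\D$ centralize each other inside $\C\bt\D$ (they do, as $\C\bt\D\subset\Z$ of something, or directly because $\C\bt\be$ and $\be\bt\D$ commute), the images $\C_1$ and $\D_1$ centralize each other in $\A$. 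Counting Frobenius--Perron dimensions via \eqref{FPdimCA} and Lemma~\ref{prdim}: $\FP(\A)=\FP(\C\bt\D)/\FP(B)$, and one checks $\C_1\vee\D_1$ exhausts $\A$ and $\C_1,\D_1$ have equal FP-dimension, with $\C_1\cap\D_1=\Vec$; then the map $\C\to\C_1$ is an equivalence onto its image and likewise $\D^{\rev}\to\D_1$ (the reverse appears because $\D$ sits in $\C\bt\D$ but its image must centralize $\C_1$ and the braiding on the image is inherited from $\A$, i.e.\ from the centralizer picture, giving $\C_1^{\rev}\simeq\D_1$). Composing yields $\phi:\C_1^{\rev}\simeq\D_1$, and then $\A=\C_1\bt_{?}\D_1$ factors as $\C_1\bt\C_1^{\rev}\to\A$, whose "kernel" \'etale algebra is exactly $I_\ot(\be)=A(\be,\be,\C_1,\D_1,\phi)$; matching it back through Lemma~\ref{et0} identifies $B$ with $A(A_1,A_2,\C_1,\D_1,\phi)$.

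For the injectivity ("pairwise non-isomorphic") half of the statement, I would recover the data $(A_1,A_2,\C_1,\D_1,\phi)$ from the algebra intrinsically: $A_1 = A(A_1,A_2,\C_1,\D_1,\phi)\cap\C$ and $A_2$ is its intersection with $\D$ (this uses that $I_\ot(\be)=\bigoplus_{X\in\O(\C_1)}X^*\bt X$ has trivial intersection with each factor, which is immediate since the only $X$ contributing a summand in $\C_1\bt\Vec$ is $X=\be$). Passing to $\C^0_{A_1}\bt\D^0_{A_2}$, the algebra becomes $I_\ot(\be)\in\C_1\bt\C_1^{\rev}$, and from this algebra (equivalently, from the corresponding surjective central functor to $\C_1$) one reads off $\C_1$ as a subcategory of $\C^0_{A_1}$, then $\D_1$ as a subcategory of $\D^0_{A_2}$, and finally $\phi$ as the induced braided equivalence. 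I expect the main obstacle to be the bookkeeping around \emph{why} the equivalence produced is between $\C_1^{\rev}$ and $\D_1$ rather than $\C_1$ and $\D_1$ — i.e.\ getting the braidings straight — and, relatedly, verifying cleanly that the images $\C_1,\D_1$ centralize each other and jointly generate $\A$ with the right dimension count; the FP-dimension identities \eqref{FPdimCA}, \eqref{prodFP}, and Lemma~\ref{prdim} should make that count go through, but one must be careful that $F$ restricted to each factor is genuinely fully faithful, which is where the triviality of $B\cap\C$ and $B\cap\D$ is essential (cf.\ the fact that central functors from completely anisotropic categories are injective, applied here after de-equivariantizing away $A_1$ and $A_2$).
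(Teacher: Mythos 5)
Your opening reduction is exactly the paper's: set $A_1=B\cap\C$, $A_2=B\cap\D$, use Corollary~\ref{etsub} and Lemma~\ref{et0} to pass to $\C^0_{A_1}\bt\D^0_{A_2}$ and assume $A_1=A_2=\be$, and note that triviality of these intersections makes the restrictions of the free-module functor to $\C$ and $\D$ fully faithful. But the core case is then set up incorrectly. You take $\C_1,\D_1$ to be the \emph{full images} of $\C$ and $\D$ in $\A=(\C\bt\D)_B$ and assert that they have equal Frobenius--Perron dimension, intersect in $\Vec$, and generate $\A$. Since the restricted functors are injective, those images have dimensions $\FP(\C)$ and $\FP(\D)$, so your three claims are mutually consistent only when $\FP(B)=1$, i.e.\ $B=\be$: already for $\D=\C^{\rev}$ and $B=\bigoplus_{X}X^*\bt X$ one has $\A\cong\C$ and both images are \emph{all} of $\A$, with intersection everything rather than $\Vec$; and for $B=\be$ the images have unequal dimensions in general. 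The correct move is the opposite of what you wrote: one looks at the \emph{intersection} $\A_1$ of the two images, which pulls back to fusion subcategories $\C_1\subset\C$ and $\D_1\subset\D$ (typically proper, not copies of $\C$ and $\D$), yielding a tensor equivalence $\phi:\C_1\to\D_1$ and the identification $B\cong\bigoplus_{X\in\O(\C_1)}X^*\bt\phi(X)$. Also note $\A$ itself is not braided, so "the images centralize each other in $\A$" only makes sense after lifting along the central structure to $\Z(\A)$.

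The real content of the theorem --- that $\phi$ is braided as a functor $\C_1^{\rev}\to\D_1$ --- is the step you flag as "the main obstacle," and your sketch does not supply it. The paper's argument is: reduce to $\C=\C_1$, $\D=\D_1$ (so the forgetful functors $\C\to\A$ and $\D\to\A$ are equivalences), lift to $\Z(\A)$ where the images of $\C$ and $\D$ centralize each other; then \eqref{prodFP} together with $\FP(\Z(\A))=\FP(\A)^2$ and $\FP(\A)=\FP(\C)=\FP(\D)$ forces $\D=\C'_{\Z(\A)}$, while the equivalence $\C\xrightarrow{\sim}\A$ identifies $\C'_{\Z(\A)}$ with $\C^{\rev}$; hence $\phi$ is a composite $\C^{\rev}\xrightarrow{\sim}\C'_{\Z(\A)}\xrightarrow{\sim}\D$ of braided equivalences. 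Nothing in your proposal substitutes for this dimension count and centralizer identification. Your treatment of the "pairwise non-isomorphic" half (recovering $A_1,A_2,\C_1,\D_1,\phi$ intrinsically from the algebra) is the right idea and matches the paper's remark, but as written it relies on the structural analysis of the core case that is missing.
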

\begin{proof} 
Let $A$ be an \'etale algebra in $\C \bt \D$. We will construct data 
$A_1, A_2, \C_1,\D_1,\phi$ such that $A\simeq A(A_1, A_2, \C_1, \D_1,\phi)$. 
Moreover, if the constructions below applied to $A=A(A_1, A_2, \C_1, \D_1,\phi)$ we recover the original $A_1, A_2, \C_1,\D_1,\phi$.

Set $A_1=A\cap (\C \bt \be)$ and $A_2= A\cap (\be \bt \D)$. 
By Corollary \ref{etsub} the algebras $A_1\in \C, A_2\in \D$ and $A_1\bt A_2\in \C \bt \D$ are connected \'etale. The algebra $A$ can be considered as an \'etale algebra in $(\C \bt \D)^0_{A_1\bt A_2}\simeq \C^0_{A_1}\bt \D^0_{A_2}$. Replacing $\C$ by $\C^0_{A_1}$ and $\D$ by 
$\D^0_{A_2}$ we reduce the Theorem to the case when $A_1=\be$ and $A_2=\be$.

Let $A\in \C \bt \D$ be an \'etale algebra such that $A\cap (\C \bt \be)=A\cap (\be \bt \D)=\be$. Consider
fusion category $\A =(\C \bt \D)_{A}$. The restrictions of the canonical central functor $\C \bt \D \to (\C \bt \D)_{A}$ to $\C =\C \bt \be$ and $\D =\be \bt \D$ are injective . Let $\A_1\subset \A$ be
the intersection of the images of $\C$ and $\D$ in $\A$. Then there are fusion subcategories $\C_1\subset \C$ and $\D_1 \subset \D$ such that the functors above restrict to equivalences $\C_1\simeq \A_1$ and $\D_1\simeq \A_1$. Combining these equivalences we get a tensor equivalence $\phi: \C_1\xrightarrow{\sim} \D_1$; it is clear that the algebra $A$ identifies with $\bigoplus_{X\in \O(\C_1)}X^*\bt \phi(X)$
(more precisely, it identifies
with the image of $I_\ot(\be)\in \C_1\bt \C_1^{\rev}$ under the equivalence $\Id \bt \phi$).

To finish the proof we need to show that the equivalence $\phi :\C_1 \xrightarrow{\sim} \D_1$ constructed above is 
{\em braided} when considered as a functor $\C_1^{\rev}\to \D_1$. For this we can assume that
that $\C=\C_1$ and $\D=\D_1$ (and so $\A=\A_1$). The functor $\C \bt \D \to \A$ is central, i.e.,  
it factorizes as  $\C \bt \D \to \Z(\A)\to \A$. The functors $\C \to \Z(\A)$ and $\D \to \Z(\A)$ are injective;
if we identify $\C$ and $\D$ with their images in $\Z(\A)$, then they centralize each other, that is
$\D \subset \C'_{\Z(\A)}$, where $\C'_{\Z(\A)}$ denotes the centralizer of $\C$ in $\Z(\A)$. 
Since $\FP(\Z(\A))=\FP(\A)^2$ and $\FP(\A)=\FP(\C)=\FP(\D)$, we see from 
by \eqref{prodFP} that $\FP(\C'_{\Z(\A)})=\FP(\D)$, therefore  $\D= \C'_{\Z(\A)}$. On the other hand, since $\C \to \Z(\A)\to \A$ is an equivalence,
we have $\C'_{\Z(\A)}\cong \C^{\rev}\subset \Z(\A)$. 
It follows from definitions that the functor $\phi$ is isomorphic to the composition of braided tensor functors $\C^{\rev} \xrightarrow{\sim}   \C'_{\Z(\A)} \xrightarrow{\sim}  \D$ and, hence, it is braided.
\end{proof}

\subsection{Lagrangian algebras in the center of non-degenerate braided fusion category}

Let $\C$ be a non-degenerate braided fusion category.  We introduced the notion of a Lagrangian algebra in $\C$
in Section~\ref{Prelim-etale}.

\begin{proposition} 
\label{parent}
Let $\C$ be a non-degenerate braided fusion category. 
A Lagrangian algebra in $\Z(\C)$ is of the form $A(A_1,A_2,\C_1,\D_1,\phi)$
with $\FP(A_1)=\FP(A_2)$, $\C_1=\C^0_{A_1}$, $\D_1=(\C^{\rev})^0_{A_2}$. 
\end{proposition}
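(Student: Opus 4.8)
The plan is to apply Theorem~\ref{etale in CD} to the non-degenerate braided category $\Z(\C) \cong \C \bt \C^\rev$ (using the equivalence $G$ of \eqref{Gfun}), so that any connected \'etale algebra in $\Z(\C)$ — in particular a Lagrangian one — is of the form $A(A_1,A_2,\C_1,\D_1,\phi)$ for unique data $A_1 \in \C$, $A_2 \in \C^\rev$, $\C_1 \subset \C^0_{A_1}$, $\D_1 \subset (\C^\rev)^0_{A_2}$, and a braided equivalence $\phi: \C_1^\rev \xrightarrow{\sim} \D_1$. It then remains to show that the Lagrangian condition $\FP(\Z(\C)) = \FP(A)^2$ forces $\FP(A_1) = \FP(A_2)$, $\C_1 = \C^0_{A_1}$ and $\D_1 = (\C^\rev)^0_{A_2}$.

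First I would record the numerics. By \eqref{dimZ}, $\FP(\Z(\C)) = \FP(\C)^2$. By Lemma~\ref{prdim}, $\FP(A) = \FP(A_1)\FP(A_2)\FP(\C_1)$. So the Lagrangian condition reads
\[
\FP(\C)^2 = \FP(A_1)^2\FP(A_2)^2\FP(\C_1)^2,
\qquad\text{i.e.}\qquad
\FP(\C) = \FP(A_1)\FP(A_2)\FP(\C_1).
\]
Next I would bound $\FP(\C_1)$ from above. Since $\C_1 \subset \C^0_{A_1}$, we have $\FP(\C_1) \le \FP(\C^0_{A_1}) = \FP(\C)/\FP(A_1)^2$ by \eqref{FPdimCA0} (here I use that $\C$ is non-degenerate, so $\C^0_{A_1}$ makes sense and has the stated dimension). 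Symmetrically, $\FP(\C_1) = \FP(\D_1) \le \FP((\C^\rev)^0_{A_2}) = \FP(\C)/\FP(A_2)^2$. Multiplying these two inequalities gives $\FP(\C_1)^2 \le \FP(\C)^2/(\FP(A_1)^2\FP(A_2)^2)$, hence $\FP(\C_1) \le \FP(\C)/(\FP(A_1)\FP(A_2))$. Combined with the equality above this inequality must be an equality, so both of the individual inequalities $\FP(\C_1) \le \FP(\C)/\FP(A_1)^2$ and $\FP(\D_1) \le \FP(\C)/\FP(A_2)^2$ are equalities, forcing $\C_1 = \C^0_{A_1}$ and $\D_1 = (\C^\rev)^0_{A_2}$ (a fusion subcategory with the same Frobenius–Perron dimension as the ambient category is the whole category). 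Feeding $\FP(\C_1) = \FP(\C)/\FP(A_1)^2$ back into $\FP(\C) = \FP(A_1)\FP(A_2)\FP(\C_1)$ gives $\FP(A_1) = \FP(A_2)$.

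The only mild subtlety — and the one place to be careful rather than the "main obstacle" — is matching up the sides: under $G: \C \bt \C^\rev \xrightarrow{\sim} \Z(\C)$ the algebra $A_1 = A \cap (\C \bt \be)$ lives in $\C$ and $A_2 = A \cap (\be \bt \C^\rev)$ lives in $\C^\rev$, and one should note that the reduction in the proof of Theorem~\ref{etale in CD} replaces $\C$ by $\C^0_{A_1}$ and $\C^\rev$ by $(\C^\rev)^0_{A_2}$, with the braided equivalence $\phi$ then identifying a fusion subcategory of $(\C^0_{A_1})^\rev$ with one of $(\C^\rev)^0_{A_2}$; since $(\C^0_{A_1})^\rev \cong (\C^\rev)^0_{A_1}$ canonically this is exactly the data $(A_1,A_2,\C_1,\D_1,\phi)$ as stated. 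Everything else is the dimension count above, so there is essentially no hard step: the argument is purely a squeeze between Lemma~\ref{prdim} and the two instances of \eqref{FPdimCA0}, with \eqref{dimZ} fixing the target dimension.
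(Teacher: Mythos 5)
Your proof is correct and follows essentially the same route as the paper: identify $\Z(\C)\cong\C\bt\C^{\rev}$, invoke Theorem~\ref{etale in CD} to write the Lagrangian algebra as $A(A_1,A_2,\C_1,\D_1,\phi)$, and then squeeze with Lemma~\ref{prdim} and the two instances of \eqref{FPdimCA0} against the Lagrangian condition \eqref{dimZ} to force $\C_1=\C^0_{A_1}$, $\D_1=(\C^{\rev})^0_{A_2}$ and $\FP(A_1)=\FP(A_2)$. The paper's proof is the same dimension-count argument, stated more tersely.
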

\begin{proof} 
Since $\C$ is non-degenerate, we have $\Z(\C)\cong \C \bt \C^\rev$, see 
 Section~\ref{Prelim-brcat}. Using Lemma \ref{prdim} and  \eqref{FPdimCA0} we have  
\begin{eqnarray*}
\FP(A(A_1, A_2, \C_1, \D_1,\phi)) &\leq& 
\FP(A_1)\FP(A_2)\FP(\C^0_{A_1})  \\
&=& \frac{\FP(A_2)}{\FP(A_1)}\FP(\C)
\end{eqnarray*}
and, similarly,  
\[
\FP(A(A_1, A_2, \C_1, \D_1,\phi))\le \frac{\FP(A_1)}{\FP(A_2)}\FP(\C). 
\]
The result follows.
\end{proof}


\begin{corollary} 
Let $\C$ be a non-degenerate braided fusion category. Equivalence classes of indecomposable module
categories over $\C$ are parameterized by isomorphism classes of triples $(A_1,\, A_2,\,  \phi)$ where $A_1, A_2 \in \C$ are 
connected \'etale algebras and $\phi: \C^0_{A_1}\xrightarrow{\sim} (\C^0_{A_2})^\rev$ 
is a braided equivalence.
\end{corollary}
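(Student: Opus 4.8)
The statement to prove is the Corollary following Proposition~\ref{parent}: that indecomposable module categories over a non-degenerate braided fusion category $\C$ are parameterized by triples $(A_1, A_2, \phi)$ with $A_i \in \C$ connected \'etale and $\phi : \C^0_{A_1} \xrightarrow{\sim} (\C^0_{A_2})^\rev$ braided. The plan is to combine two facts already in the excerpt: first, the bijection (recalled in Section~\ref{Prelim-etale}, via \cite[Theorem 1.1]{KR} and \cite[Proposition 4.8]{DMNO}) between equivalence classes of indecomposable $\A$-module categories and isomorphism classes of Lagrangian algebras in $\Z(\A)$; and second, Proposition~\ref{parent}, which says every Lagrangian algebra in $\Z(\C)$ has the form $A(A_1, A_2, \C_1, \D_1, \phi)$ with $\FP(A_1) = \FP(A_2)$, $\C_1 = \C^0_{A_1}$, and $\D_1 = (\C^\rev)^0_{A_2}$. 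Since $\Z(\C) \cong \C \bt \C^\rev$ (non-degeneracy), the module categories over $\C$ are the Lagrangian algebras in $\C \bt \C^\rev$.

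First I would invoke Proposition~\ref{parent} to see that such a Lagrangian algebra is determined by the data $(A_1, A_2, \phi)$ alone — the subcategories $\C_1, \D_1$ are forced to be $\C^0_{A_1}$ and $(\C^\rev)^0_{A_2}$, and $\phi$ is a braided equivalence $\C_1^\rev = (\C^0_{A_1})^\rev \to \D_1 = (\C^\rev)^0_{A_2}$. Rewriting $\D_1 = (\C^\rev)^0_{A_2} = (\C^0_{A_2})^\rev$ (the category of local $A_2$-modules in $\C^\rev$ is the reverse of the category of local $A_2$-modules in $\C$, since "local" is symmetric in $c$ versus $\tilde c$ and the braided structure of $\C_{A_2}^0$ gets reversed accordingly), the equivalence $\phi$ becomes a braided equivalence $(\C^0_{A_1})^\rev \to (\C^0_{A_2})^\rev$, equivalently a braided equivalence $\C^0_{A_1} \to (\C^0_{A_2})^{\rev}$ after taking reverses on both sides — matching the statement. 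Second, I would appeal to Theorem~\ref{etale in CD}: the algebras $A(A_1, A_2, \C_1, \D_1, \phi)$ are pairwise non-isomorphic, so the correspondence triple $\mapsto$ Lagrangian algebra is injective; together with Proposition~\ref{parent} giving surjectivity onto Lagrangian algebras, we get a bijection between isomorphism classes of triples and isomorphism classes of Lagrangian algebras in $\Z(\C)$. Composing with the KR/DMNO bijection yields the result.

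The main point requiring care — and the only real obstacle — is the bookkeeping of which braided equivalence is being parameterized: one must check that the notion of isomorphism of triples $(A_1, A_2, \phi)$ matches isomorphism of the corresponding algebras $A(A_1, A_2, \C^0_{A_1}, (\C^\rev)^0_{A_2}, \phi)$ under the identifications, and that the reversal $(\C^\rev)^0_{A_2} \cong (\C^0_{A_2})^\rev$ is compatible with the braided structures. Theorem~\ref{etale in CD}'s assertion that the $A(A_1,A_2,\C_1,\D_1,\phi)$ are pairwise non-isomorphic already packages the statement that the data is recovered from the algebra, so this reduces to checking that in the Lagrangian case no information is lost when we drop $\C_1, \D_1$ from the notation (they are determined) and rephrase $\phi$ via the reversal identity. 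This is routine given Proposition~\ref{parent}. I would therefore write the proof as: "By Section~\ref{Prelim-etale}, indecomposable $\C$-module categories correspond to Lagrangian algebras in $\Z(\C) \cong \C \bt \C^\rev$. By Proposition~\ref{parent} and Theorem~\ref{etale in CD}, these are in bijection with triples $(A_1, A_2, \phi)$ as stated, using $(\C^\rev)^0_{A_2} \cong (\C^0_{A_2})^\rev$."
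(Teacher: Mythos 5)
Your proof is correct and follows essentially the same route as the paper: combine Proposition~\ref{parent} (via Theorem~\ref{etale in CD}) with the bijection between indecomposable module categories and Lagrangian algebras in the center recalled in Section~\ref{Prelim-etale}. The extra bookkeeping you supply (injectivity from the pairwise non-isomorphism of the algebras $A(A_1,A_2,\C_1,\D_1,\phi)$ and the identification $(\C^\rev)^0_{A_2}\cong(\C^0_{A_2})^\rev$) is exactly what the paper leaves implicit.
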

\begin{proof}
This statement follows by combining Proposition~\ref{parent} and the bijection between module categories
over a fusion category  and Lagrangian algebras in its center, see  Section~\ref{Prelim-etale}.
\end{proof}

\begin{remark} The case $A_1=A_2=\be$ corresponds to the {\em invertible} module categories
over $\C$, see \cite[\S 5.4]{ENO3}. The result above shows that the invertible module categories
over $\C$ are in bijection with braided autoequivalences of $\C$. This is a weak form
of \cite[Theorem 5.2]{ENO3}.
\end{remark}

\begin{remark} In the setup of Rational Conformal Field Theory, the category $\C$ is the representation
category of a vertex algebra and a Lagrangian algebra $A\in \C \bt \C^{\rev}$ (or rather the underlying
vector space) is the Hilbert space of physical states; the class $[A]\in K(\C \bt \C^{\rev})=K(\C)\ot_\mathbb{Z} K(\C)$ 
is the {\em modular invariant} (or partition function) of the theory. In the physical terminology, the modular invariant is of type I if $A_1=A_2$ and $\phi=\Id$; of type II if $A_1=A_2$ but $\phi \not \simeq \Id$; and is {\em heterotic} if $A_1\not \simeq A_2$ (one should be careful, sometimes the same terminology is used when $A_1, A_2, \phi$ are replaced by their classes in the Grothendieck group).
In this language, Proposition~\ref{parent} says that each modular invariant has two type I ``parents".
This result was initially observed by physicists \cite{MS, DV}; one finds mathematical treatments 
in \cite{BE, FFRS}.
\end{remark}

\begin{example} \label{cd0}
Let $\C$ and $\D$ be non-degenerate braided fusion categories. Recall from Section \ref{Prelim-etale}
that $\C \bt \D \simeq \Z(\A)$ if and only if $\C \bt \D$ contains a Lagrangian algebra $I(\be)$ (here $I: \A \to \Z(\A)$ is the right adjoint of the forgetful functor). Thus Theorem \ref{etale in CD}
implies that $\D^{\rev}\simeq \C_A^0$ for some connected \'etale algebra $A\in \C$ if and only if
there exists a braided equivalence $\C \bt \D \simeq \Z(\A)$ such that the forgetful functor $\D \to \A$
is injective. Moreover, in this case $\C \cap I(\be)=A$.
\end{example}

\subsection{Subcategories of $\Z(\A)$}
Let $\A$ be a fusion category and let $\Z(\A)$ be its Drinfeld center with forgetful functor $F:\Z(\A)\to \A$.
Let $\C \subset \Z(\A)$ be a fusion subcategory and let $\C'\subset \Z(\A)$ be its M\"uger centralizer in
$\Z(\A)$. 

\begin{theorem}
\label{insu}
 The forgetful functor $\C \to \A$ is injective 
 if and only if the forgetful functor $\C'\to \A$ is surjective.
\end{theorem}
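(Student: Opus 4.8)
The statement is an ``if and only if'' between a property of $\C$ and a property of its centralizer $\C'$ in $\Z(\A)$, so the natural strategy is to compute Frobenius--Perron dimensions and exploit the general formula \eqref{prodFP}. First I would recall that for the Drinfeld center we have $\FP(\Z(\A))=\FP(\A)^2$ by \eqref{dimZ}, and that for any fusion subcategory $\C \subset \Z(\A)$ the relation \eqref{prodFP} (with ambient category $\Z(\A)$, which is non-degenerate, so that $\Z(\A)'=\Vec$ and $\C \cap \Z(\A)' = \Vec$) gives $\FP(\C)\,\FP(\C') = \FP(\Z(\A)) = \FP(\A)^2$. Thus $\FP(\C')=\FP(\A)^2/\FP(\C)$, and any statement about dimensions of $\C'$ can be traded for a statement about dimensions of $\C$.

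\textbf{Key steps.} The plan is to reduce both the ``injective'' and the ``surjective'' conditions to a single numerical identity. For injectivity of $F|_\C : \C \to \A$: since $F$ is a tensor functor and $\C$ is a fusion category, $F|_\C$ is fully faithful if and only if it is injective in the sense of Section~\ref{Prelim-fuscat}, and by \cite[Proposition 2.19]{EO} (quoted in the excerpt) this happens exactly when $\FP$ of the image subcategory $F(\C) \subset \A$ equals $\FP(\C)$; more precisely, $F|_\C$ injective forces the image to be a fusion subcategory of $\A$ of dimension $\FP(\C)$. For surjectivity of $F|_{\C'}: \C' \to \A$: by definition surjectivity means every simple object of $\A$ is a subobject of $F(Z)$ for some $Z \in \C'$; I would argue (using that $A = I(\be) = \bigoplus_{X \in \O(\C)} X^* \bt X$-type reasoning, or rather the adjunction for $F: \Z(\A) \to \A$) that the subcategory of $\A$ generated by the image of $\C'$ has Frobenius--Perron dimension $\FP(\C')\,\FP(\A)/\FP(\Z(\A))^{1/1}$—wait, more cleanly: the image subcategory $\langle F(\C')\rangle \subset \A$ has the property that its dimension times the dimension of the ``kernel'' part accounts for $\FP(\A)$. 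The cleanest route is to invoke the adjunction between $F: \Z(\A) \to \A$ and its right adjoint $I$: one has $\Hom_\A(F(Z), Y) \cong \Hom_{\Z(\A)}(Z, I(Y))$, and $F$ surjective restricted to $\C'$ means $I(\be)$, composed appropriately, detects all of $\A$. I would instead use the following: $F|_\C$ is injective iff $F(\C)$ is a fusion subcategory with $\FP(F(\C)) = \FP(\C) = \FP(\A)^2/\FP(\C')$, while a standard computation (the image of $F$ restricted to a subcategory $\D \subset \Z(\A)$ generates a subcategory of dimension $\FP(\A)\cdot\FP(\D)/\FP(\Z(\A))$ when $F|_\D$ is injective, and dually) shows $F|_{\C'}$ is surjective iff $\FP(\langle F(\C')\rangle) = \FP(\A)$, iff $\FP(\A) \le \FP(\C')\FP(\A)/\FP(\A)^{?}$. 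Let me state it as: the subcategory of $\A$ Frobenius--Perron-generated by $F(\C')$ equals $\A$ iff $\FP(\C') \ge \FP(\A)$ with equality to the right image; combined with $\FP(\C)\FP(\C') = \FP(\A)^2$ and the injectivity criterion $\FP(\C) \le \FP(\A)$ (always true since $F|_\C$ at least has image of dimension $\le \FP(\A)$, and is injective iff $\FP(\text{image})=\FP(\C)$ with image a genuine subcategory), the two conditions become the mutually equivalent inequalities $\FP(\C) \le \FP(\A)$ (realized) and $\FP(\C') \ge \FP(\A)$ (realized), which are equivalent via the product formula.

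\textbf{Main obstacle.} The delicate point is \emph{not} the dimension bookkeeping but establishing the precise relationship between a subcategory $\D \subset \Z(\A)$ and the fusion subcategory $\langle F(\D)\rangle \subset \A$ it generates under the forgetful functor—specifically, proving the ``dual'' statement that the image of the centralizer is as large as possible exactly when the image of the original is as small (i.e., faithful) as possible. I expect to need the fact that $\Z(\A) \cong \Z(\A)^{\rev}$-module structure and the characterization of $\A$-module categories via Lagrangian algebras: the subcategory $\C \subset \Z(\A)$ and its centralizer $\C'$ play complementary roles with respect to a decomposition of $\A$ itself as a module over $\langle F(\C)\rangle$, and one must show that $F|_\C$ fully faithful is equivalent to $\langle F(\C')\rangle = \A$ by a careful analysis of hom-spaces, $\Hom_\A(F(Z), \be)$ for $Z \in \C'$, using the central structure. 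Making this correspondence precise—rather than merely comparing dimensions—is where the real work lies, and I would expect to lean on \eqref{D''}, \eqref{prodFP}, and the adjunction $(F, I)$ together with the explicit form $I(\be) = \bigoplus_{X \in \O(\A)} \text{(something)}$ to pin it down.
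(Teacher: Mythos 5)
Your proposal does not close the argument: it reduces both conditions to Frobenius--Perron inequalities, but neither condition is actually equivalent to a dimension inequality involving only $\FP(\C)$, $\FP(\C')$ and $\FP(\A)$. Injectivity of the forgetful functor on $\C$ is equivalent to $\C \cap I(\be)=\be$, where $I$ is the right adjoint of $F:\Z(\A)\to\A$ (since $\Hom_\A(F(X),F(Y))\cong\Hom_{\Z(\A)}(X,Y\ot I(\be))$); it is not equivalent to ``$\FP(\C)\le\FP(\A)$, realized'' (e.g.\ the image of $\Rep(G)\subset\Z(\Vec_G)$ in $\Vec_G$ has $\FP(\C)=\FP(\A)$ while the functor is far from injective). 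Likewise your formula for the image subcategory, $\FP(\langle F(\D)\rangle)=\FP(\A)\FP(\D)/\FP(\Z(\A))$, is incorrect; the correct statement is $\FP(F(\D))=\FP(\D)/\FP(\D\cap I(\be))$, because $F(\D)\simeq \D_{\D\cap I(\be)}$ (this is \cite[Lemma 3.11]{DMNO}). So the surjectivity criterion for $\C'\to\A$ unavoidably involves $\C'\cap I(\be)$ as well, and the single identity $\FP(\C)\FP(\C')=\FP(\A)^2$ cannot by itself trade one condition for the other. Your ``main obstacle'' paragraph in effect concedes exactly this: the asserted complementarity between $F|_\C$ faithful and $\langle F(\C')\rangle=\A$ is the content of the theorem, and you do not supply a mechanism for it.

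The missing ingredient, which is how the paper proceeds, is the correspondence between \'etale subalgebras of the canonical algebra $I(\be)$ and fusion subcategories of $\A$: by \cite[Theorem 4.10]{DMNO} the map $B\mapsto \A(B)$ (objects $X$ with $I(X)$ dyslectic over $B$) is a lattice anti-isomorphism with $\FP(\A(B))=\FP(\A)/\FP(B)$. One then proves that $F(\C')=\A(\C\cap I(\be))$: the containment $F(\C')\subset \A(\C\cap I(\be))$ is clear and gives the inequality $\FP(\C')/\FP(\C'\cap I(\be))\le \FP(\A)/\FP(\C\cap I(\be))$; interchanging $\C$ and $\C'$ gives the symmetric inequality, and multiplying the two and comparing with $\FP(\C)\FP(\C')=\FP(\Z(\A))=\FP(\A)^2$ (this is where \eqref{prodFP} and \eqref{dimZ} enter) forces equality, hence $F(\C')=\A(\C\cap I(\be))$. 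The theorem then follows formally: $F|_\C$ injective $\Leftrightarrow$ $\C\cap I(\be)=\be$ $\Leftrightarrow$ $\A(\C\cap I(\be))=\A$ $\Leftrightarrow$ $F|_{\C'}$ surjective, using the anti-isomorphism of lattices. So the dimension count does appear, but only after the problem has been transported to \'etale subalgebras of $I(\be)$; without that step your plan stalls at the very point you flagged.
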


The proof of Theorem~\ref{insu} is given in Section~\ref{Proof of insu}.

\begin{remark} Let $\C_1$ and $\D$ be non-degenerate braided fusion categories and let
$A\in \C_1\bt \D$ be a connected \'etale algebra such that $A\cap \C_1=\be$. Let 
$\C_2^{\rev}=(\C_1\bt \D)_A^0$. In view of Example \ref{cd0} the conditions above are equivalent
to the existence of a braided equivalence $\C_1\bt \D \bt \C_2\simeq \Z(\A)$ such that
\begin{enumerate}
\item[(1)] Forgetful functor $\C_1\to \A$ is injective and
\item[(2)] Forgetful functor $\C_1\bt \D \to \A$ is surjective.
\end{enumerate}
Since $\C_1'=\D \bt \C_2$ and $(\C_1\bt \D)'=\C_2$, Theorem \ref{insu} implies that the conditions above are equivalent to
\begin{enumerate}
\item[(1')] Forgetful functor $\D \bt \C_2 \to \A$ is surjective and
\item[(2')] Forgetful functor $\C_2\to \A$ is injective.
\end{enumerate}

In view of Example \ref{cd0} these conditions are equivalent to the existence of  an \'etale algebra
$B\in \D \bt \C_2$ such that $B\cap \C_2=\be$ and $(\D \bt \C_2)_B^0\simeq \C_1^{\rev}$.
Thus we proved the following result:

\begin{theorem} Let $\C_1, \C_2, \D$ be non-degenerate braided fusion categories. The existence of
\'etale algebra $A\in \C_1\bt \D$ such that $A\cap \C_1=\be$ and $(\C_1\bt \D)_A^0=\C_2^{\rev}$ is
equivalent to the existence of \'etale algebra
$B\in \D \bt \C_2$ such that $B\cap \C_2=\be$ and $(\D \bt \C_2)_B^0\simeq \C_1^{\rev}$. $\square$
\end{theorem}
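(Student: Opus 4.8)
The plan is to reduce both directions of the equivalence to one and the same, manifestly symmetric, condition on the triple $(\C_1,\D,\C_2)$: there exist a fusion category $\A$ and a braided equivalence $\C_1\bt\D\bt\C_2\simeq\Z(\A)$ such that the restrictions $F|_{\C_1}$ and $F|_{\C_2}$ of the forgetful functor $F\colon\Z(\A)\to\A$ are both injective. Since this condition is insensitive to interchanging $\C_1$ and $\C_2$ (the Deligne product being symmetric monoidal), once it is shown to be equivalent to the existence of an algebra $A$ as in the statement, it is equally equivalent to the existence of an algebra $B$ as in the statement, and the theorem follows.

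To see the equivalence with the existence of $A$, I would argue as follows. Since $\C_1$ and $\D$ are non-degenerate, so is $\C_1\bt\D$, hence so is $(\C_1\bt\D)_A^0$, and therefore $\C_2$ is non-degenerate; thus Example~\ref{cd0} applies to the pair $(\C_1\bt\D,\C_2)$ and gives that a connected \'etale algebra $A\in\C_1\bt\D$ with $(\C_1\bt\D)_A^0\simeq\C_2^\rev$ exists if and only if there is a braided equivalence $\C_1\bt\D\bt\C_2\simeq\Z(\A)$ with $F|_{\C_2}$ injective, and that for any such equivalence $(\C_1\bt\D)\cap I(\be)=A$, where $I(\be)$ is the canonical Lagrangian algebra of $\Z(\A)$. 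Because $\C_1\subset\C_1\bt\D$, the maximal \'etale subalgebra of $A$ lying in $\C_1$ equals $I(\be)\cap\C_1$, so the extra hypothesis $A\cap\C_1=\be$ is exactly $I(\be)\cap\C_1=\be$. It then remains to check, for a fusion subcategory $\K\subseteq\Z(\A)$, that $F|_{\K}$ is injective if and only if $I(\be)\cap\K=\be$; granting this with $\K=\C_1$, the existence of an algebra $A$ with all the required properties is precisely the symmetric condition above, and by the symmetry already noted so is the existence of $B$. (One can also recast the symmetric condition in the form used in the preceding remark: inside $\Z(\A)=\C_1\bt\D\bt\C_2$ one has $\C_2=(\C_1\bt\D)'$ and $\C_1=(\D\bt\C_2)'$, so by Theorem~\ref{insu} injectivity of $F|_{\C_2}$, resp.\ of $F|_{\C_1}$, is the same as surjectivity of $F|_{\C_1\bt\D}$, resp.\ of $F|_{\D\bt\C_2}$, which is how conditions $(1)$, $(2)$ and $(1')$, $(2')$ arise.)

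The step that $F|_{\K}$ is injective exactly when $I(\be)\cap\K=\be$ is the only part that is not formal bookkeeping with Example~\ref{cd0}, Deligne products, and centralizers, and I expect it to be the main obstacle. I would obtain it from the projection formula $I(F(Y))\cong Y\ot I(\be)$ — immediate by Yoneda, since $F$ is a tensor functor with right adjoint $I$ and $I(\be)$ is the Lagrangian of $\Z(\A)$ — which gives $\Hom_{\A}(F(X),F(Y))\cong\Hom_{\Z(\A)}(X,Y\ot I(\be))$; hence $F|_{\K}$ is fully faithful if and only if $\Hom_{\Z(\A)}(X,Y\ot I(\be))=\Hom_{\Z(\A)}(X,Y)$ for all $X,Y\in\K$, if and only if $\be$ is the only simple object of $\K$ occurring in $I(\be)$ (the multiplicity of $\be$ being $1$ by connectedness of $I(\be)$), and this is precisely $I(\be)\cap\K=\be$, because $I(\be)\cap\K$ is the sum of the isotypic components of $I(\be)$ indexed by the simple objects of $\K$ (cf.\ Corollary~\ref{etsub}).
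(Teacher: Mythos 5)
Your proposal is correct, and its skeleton is the same as the paper's: apply Example~\ref{cd0} to translate the existence of the algebra into the existence of a braided equivalence $\C_1\bt\D\bt\C_2\simeq\Z(\A)$ subject to conditions on the forgetful functor, identify the extra hypothesis $A\cap\C_1=\be$ with $I(\be)\cap\C_1=\be$, and conclude by symmetry of the resulting condition in $\C_1$ and $\C_2$. The one genuine difference is how the symmetry is exhibited. The paper states the condition asymmetrically as ``$F|_{\C_1}$ injective and $F|_{\C_1\bt\D}$ surjective'' and then invokes Theorem~\ref{insu} (whose proof rests on Theorem~\ref{better} and the lattice anti-isomorphism $B\mapsto\A(B)$) to pass to the mirror conditions $(1')$, $(2')$; you instead phrase the condition from the start as ``$F|_{\C_1}$ and $F|_{\C_2}$ are both injective,'' which is manifestly symmetric, so you never need the nontrivial direction of Theorem~\ref{insu} -- only the elementary criterion that $F|_{\K}$ is injective iff $\K\cap I(\be)=\be$, which you prove correctly via the projection formula $I(F(Y))\cong Y\ot I(\be)$ and connectedness of $I(\be)$ (this criterion is exactly the opening line of the paper's proof of Theorem~\ref{insu}, there stated without the Frobenius-reciprocity computation, and your identification of $\K\cap I(\be)$ with the $\K$-isotypic part of $I(\be)$ is the content of Lemma~\ref{Lemma 3.2}). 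What each route buys: the paper's formulation displays the theorem as an instance of the injectivity/surjectivity duality of Theorem~\ref{insu}, which is the form relevant for the comparison with \cite[Theorem 7.20]{FFRS}; your route is more economical and self-contained for this particular statement, at the cost of not recording the surjectivity reformulation explicitly (though you note how to recover conditions $(1)$, $(2)$, $(1')$, $(2')$ from it).
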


This Theorem is a special case\footnote{In \cite[Theorem 7.20]{FFRS} the assumption on non-degeneracy of the categories $\C_1$ and $\C_2$ (but not $\D$!) is dropped.} of \cite[Theorem 7.20]{FFRS}; see {\em loc.\ cit.}\ for the explanation of its significance for the Rational Conformal Field Theory.
\end{remark}

\subsection{Proof of Theorem \ref{insu}} 
\label{Proof of insu}
Let $I: \A \to \Z(\A)$ be the right adjoint functor of the forgetful
functor $F:\Z(\A)\to \A$. Then $I(\be)\in \Z(\A)$ is an \'etale algebra as in  Section~\ref{Prelim-etale} 
and the functor  $I$ naturally upgrades to a tensor equivalence of $\A$ and the category $\Z(\A)_{I(\be)}$ of 
right $I(\be)-$modules, see \cite[Proposition 4.4]{DMNO}.
For any \'etale subalgebra $B\subset I(\be)$ let $\A(B)\subset \A$ consist of objects $X\in \A$ such that
$I(X)\in \Z(\A)_{I(\be)}$ is a dyslectic $B-$module. Then it is proved in \cite[Theorem 4.10]{DMNO} that the
assignment $B\mapsto \A(B)$ is an anti-isomorphism of the lattices of \'etale subalgebras of $I(\be)$ 
and of fusion subcategories of $\A$. In addition we have
\begin{equation} \label{fpab}
\FP(\A(B))=\frac{\FP(\A)}{\FP(B)}.
\end{equation}

Recall from Lemma~\ref{Lemma 3.2} that for a fusion subcategory $\C \subset \Z(\A)$, the subalgebra 
$\C \cap I(\be)\subset I(\be)$ is \'etale.

\begin{theorem}\label{better}
Let $\C \subset \Z(\A)$ be a fusion subcategory. Then $\A(\C \cap I(\be))$ is precisely
the image $F(\C')$ of $\C'$ in $\A$ under the forgetful functor.
\end{theorem}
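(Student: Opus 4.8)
The plan is to pin down $\A(\C\cap I(\be))$ by a dimension count combined with the two natural inclusions that one can verify directly, exploiting the lattice anti-isomorphism $B\mapsto \A(B)$ and the centralizer formula \eqref{prodFP}.

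First I would compute the Frobenius-Perron dimension of $\C\cap I(\be)$. Since $I(\be)=\bigoplus_{X\in\O(\A)}\,?$ — more usefully, $\FP(I(\be))=\FP(\A)$ and $\Z(\A)_{I(\be)}\simeq\A$ — and since $\C\cap I(\be)$ is the maximal étale subalgebra of $I(\be)$ lying in $\C$, I expect $\FP(\C\cap I(\be))=\FP(\C\cap\C^{\rev})$ in a suitable sense; concretely, I would argue that $I(\be)$, viewed inside $\Z(\A)\simeq\Z(\A)$, intersects a fusion subcategory $\C$ in an algebra whose dimension is governed by how much of $\C$ is "visible" in $\A$, i.e. $\FP(\C\cap I(\be))=\FP(\C)/\FP(\C\cap\C')$ after identifying $\C'$ with the centralizer in $\Z(\A)$. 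Granting this, \eqref{fpab} gives
\[
\FP(\A(\C\cap I(\be)))=\frac{\FP(\A)}{\FP(\C\cap I(\be))}=\frac{\FP(\A)\FP(\C\cap\C')}{\FP(\C)}.
\]
On the other hand, since $\C'\subset\Z(\A)$ and the forgetful functor $F$ has image $F(\C')$, the general formula for images of fusion subcategories under $F$ (using that $F$ exhibits $\A$ as $\Z(\A)_{I(\be)}$ and that $F(\D)$ has FP-dimension $\FP(\D)/\FP(\D\cap I(\be))$) yields $\FP(F(\C'))=\FP(\C')/\FP(\C'\cap I(\be))$. Using \eqref{prodFP} inside $\Z(\A)$, namely $\FP(\C)\FP(\C')=\FP(\Z(\A))\FP(\C\cap\C')=\FP(\A)^2\FP(\C\cap\C')$, and the analogous intersection computation for $\C'$ (noting $(\C')'=\C$ since $\Z(\A)$ is non-degenerate, so $\C'\cap I(\be)$ has dimension $\FP(\C')/\FP(\C'\cap\C)$), one checks these two dimensions agree.

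Second, I would establish one inclusion by hand: for $X\in\A$ lying in $F(\C')$, say $X=F(Z)$ with $Z\in\C'$, I must show $I(X)\in\Z(\A)_{I(\be)}$ is a dyslectic module over $B:=\C\cap I(\be)$. The half-braiding data on $Z$ together with $Z\in\C'$ (so $Z$ centralizes all of $\C$, in particular $B$) should force the two $B$-bimodule structures on $I(X)$ coming from $c$ and $\tilde c$ to coincide — this is essentially the statement that centralizing the algebra $B$ is equivalent to being a local $B$-module after applying $I$. Combined with the FP-dimension equality, the inclusion $F(\C')\subseteq\A(B)$ of fusion subcategories of equal dimension forces equality by \cite[Proposition 2.19]{EO}.

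**The main obstacle** I anticipate is the first step: proving the precise formula $\FP(\C\cap I(\be))=\FP(\C)/\FP(\C\cap\C')$, i.e. controlling how a fusion subcategory of $\Z(\A)$ meets the canonical Lagrangian-type algebra $I(\be)$. One clean route is to invoke the lattice anti-isomorphism of \cite[Theorem 4.10]{DMNO} together with \cite[Theorem 4.10]{DMNO}'s compatibility with dyslectic modules, reducing everything to the known case $\C=\Z(\A)$ or $\C=\Vec$ and then bootstrapping; an alternative is to present $\C\cap I(\be)$ as $\uHom_{\C}(\be,\be)$ relative to the module category structure, as in the proof of Lemma~\ref{Lemma 3.2}, and compute its dimension from \eqref{FPdimCA0}. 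Either way, once the dimension bookkeeping is in place, the identification $\A(\C\cap I(\be))=F(\C')$ follows formally, and Theorem~\ref{insu} is then immediate: $F|_\C$ is injective iff $\C\cap I(\be)=\be$ (by the lattice anti-isomorphism applied to $\C$, cf.\ \cite[Theorem 4.10]{DMNO}), iff $\A(\C\cap I(\be))=\A$, iff $F(\C')=\A$, i.e. $F|_{\C'}$ is surjective.
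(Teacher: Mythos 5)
Your second step (the inclusion $F(\C')\subseteq\A(\C\cap I(\be))$, which is immediate from the definitions) and your final deduction of Theorem~\ref{insu} match the paper, but the first step — the dimension count — rests on a false formula, and this is a genuine gap. The claimed identity $\FP(\C\cap I(\be))=\FP(\C)/\FP(\C\cap\C')$ fails already for $\C=\Z(\A)$: there $\C\cap I(\be)=I(\be)$ has Frobenius--Perron dimension $\FP(\A)$, not $\FP(\A)^2$; it also fails whenever $F|_\C$ is injective with $\C\neq\Vec$ (e.g.\ $\A$ non-degenerate braided, $\Z(\A)\simeq\A\bt\A^{\rev}$, $\C=\A\bt\Vec$, where $\C\cap I(\be)=\be$ but your formula predicts $\FP(\A)$). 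In fact $\FP(\C\cap I(\be))$ is not determined by intrinsic invariants of $\C$ such as $\FP(\C)$ and $\FP(\C\cap\C')$; by the theorem itself it equals $\FP(\A)/\FP(F(\C'))$, which depends on the embedding $\C\subset\Z(\A)$, so no lemma of the shape you propose can exist and the strategy ``compute $\FP(\C\cap I(\be))$ first, then compare'' cannot be repaired in this form. There is a second error compounding this: your use of \eqref{prodFP} inside $\Z(\A)$ should read $\FP(\C)\FP(\C')=\FP(\Z(\A))\FP(\C\cap\Z(\A)')=\FP(\A)^2$ (the intersection is with the symmetric center of the ambient category, which is trivial since $\Z(\A)$ is non-degenerate), not $\FP(\A)^2\FP(\C\cap\C')$. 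With your own formulas one gets $\FP(\A(\C\cap I(\be)))=\FP(\A)\FP(\C\cap\C')/\FP(\C)$ versus $\FP(F(\C'))=\FP(\C\cap\C')$, which agree only when $\FP(\C)=\FP(\A)$, so the promised ``check that the two dimensions agree'' does not go through.

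The paper's proof avoids computing $\FP(\C\cap I(\be))$ altogether. From the easy inclusion one gets only the inequality $\FP(\C')/\FP(\C'\cap I(\be))\le\FP(\A)/\FP(\C\cap I(\be))$ (via \eqref{fpab} and the formula $\FP(F(\D))=\FP(\D)/\FP(\D\cap I(\be))$ for the image of a subcategory). One then writes the same inequality with $\C$ and $\C'$ interchanged (using $\C''=\C$), multiplies the two, and obtains $\FP(\C)\FP(\C')\le\FP(\A)^2$; since non-degeneracy of $\Z(\A)$ gives the exact equality $\FP(\C)\FP(\C')=\FP(\A)^2$, both inequalities must be equalities, which is precisely the statement $F(\C')=\A(\C\cap I(\be))$. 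If you want to keep your outline, replace your first step by this two-sided inequality argument; the unknown quantities $\FP(\C\cap I(\be))$ and $\FP(\C'\cap I(\be))$ then cancel and never need to be evaluated.
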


\begin{proof} It is clear that the image of $\C'$ in $\A \simeq \Z(\A)_{I(\be)}$ consists of modules which
are dyslectic when restricted to $\C \cap I(\be)$, that is $F(\C')\subset \A(\C \cap I(\be))$. Hence
\begin{equation}\label{menshe}
\FP(F(\C'))\le \FP(\A(\C \cap I(\be))),
\end{equation}
with equality if and only if $F(\C')=\A(\C \cap I(\be))$.
Using \eqref{fpab} and \cite[Lemma 3.11]{DMNO} we see that \eqref{menshe} is equivalent to  
\begin{equation}\label{menshee}
\frac{\FP(\C')}{\FP(\C'\cap I(\be))}\le \frac{\FP(\A)}{\FP(\C \cap I(\be))}.
\end{equation}
Interchanging $\C$ and $\C'$ we get
\begin{equation}\label{mensheee}
\frac{\FP(\C)}{\FP(\C\cap I(\be))}\le \frac{\FP(\A)}{\FP(\C' \cap I(\be))}.
\end{equation}
Multiplying \eqref{menshee} and \eqref{mensheee} and canceling the denominators we get
$$\FP(\C)\FP(\C')\le \FP(\A)^2.$$
However it follows from  \eqref{prodFP} and \eqref{dimZ}
that  $\FP(\C)\FP(\C')=\FP(\Z(\A))=\FP(\A)^2.$ Thus we had an equality in \eqref{menshe} 
and Theorem is proved.
\end{proof}

{\em Proof of Theorem \ref{insu}.} The injectivity of the forgetful functor $F: \C \to \A$ is equivalent to
$\C \cap I(\be)=\be$. Since $B\mapsto \A(B)$ is an anti-isomorphism of lattices, 
the condition $\A(B)=\A$ 
is equivalent to $B=\be$. Thus Theorem \ref{insu} follows from Theorem \ref{better}. $\square$ 


\end{section}


\begin{section}{Tensor product decomposition  of a slightly degenerate braided fusion category}
\label{Tensor product decomposition}

\subsection{Non-degenerate braided fusion categories over a symmetric category}

\bde
Let $\E$ be a symmetric fusion category and let 
$\C$ be a braided fusion category over $\E$. 
We say that  $\C$ is {\em non-degenerate over $\E$} if $\E=\C'$, i.e., 
if $\E$  coincides with the symmetric center of $\C$. 
\ede

\begin{remark}
In this terminology a slightly degenerate braided fusion category is a non-degenerate braided
fusion category over $\sVec$.
\end{remark}

The following is a generalization of the decomposition theorem (\cite{Mu2},
\cite[Theorem 3.13]{DGNO1}) for non-degenerate braided fusion categories 
to the case of categories over $\E$. 

\begin{proposition}
\label{E Muger thm}
Let $\C$ be a non-degenerate braided fusion category over $\E$ and let $\D \subset \C$
be its $\E$-subcategory.  Suppose that $\D$ is non-degenerate over $\E$.  Then the centralizer $\D'$ of $\D$ in $\C$ is also non-degenerate over $\E$
and there is a  braided tensor equivalence over $\E$: 
\begin{equation}
\label{E Muger}
\C \cong \D \bte \D'.
\end{equation}
\end{proposition}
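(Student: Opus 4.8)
The plan is to mimic the proof of the classical M\"uger decomposition theorem (\cite[Theorem 3.13]{DGNO1}), replacing the trivial symmetric center $\Vec$ by $\E$ throughout and taking all \'etale algebras, module categories and Deligne products over $\E$. First I would record the relative analogue of the dimension formula \eqref{prodFP}: for an $\E$-subcategory $\D$ of $\C$, one has $\FPdim(\D)\FPdim(\D') = \FPdim(\C)\FPdim(\E)$, since $\D \cap \C' = \D \cap \E = \E$ (because $\D \supset \E$ and $\C' = \E$). This will be the key numerical input. Likewise, by \eqref{D''}, applied to $\C$ which has symmetric center $\E$, we get $\D'' = \D \vee \C' = \D \vee \E = \D$, so $\D$ is its own double centralizer.

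Next I would show $\D'$ is non-degenerate over $\E$. Its symmetric center in $\C$ is $(\D')' \cap \D' = \D \cap \D'$ by the double-centralizer identity above; one must check this intersection equals $\E$. Since $\D$ is non-degenerate over $\E$ by hypothesis, the symmetric center of $\D$ itself is $\E$, i.e.\ $\D \cap \D'_{\text{in }\D} = \E$; combined with $\D' \subset \C$ centralizing $\D$, any object in $\D \cap \D'$ lies in the symmetric center of $\D$, hence in $\E$. So $(\D')' = \E$ inside $\C$, and $\D'$ is non-degenerate over $\E$. (Here I would be careful to phrase everything in terms of centralizers taken in $\C$, not in the abstract.)

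For the equivalence \eqref{E Muger}: the braidings give a braided tensor functor $\D \boxtimes \D' \to \C$, and since both $\D$ and $\D'$ contain $\E$ with $\E$ central, this descends to a braided tensor functor over $\E$, namely $G : \D \bte \D' \to \C$. By Remark~\ref{over E = Amod} / Remark~\ref{tensor over E = equiv}, $\D \bte \D'$ is obtained from $\D \boxtimes \D'$ by de-equivariantizing with respect to the regular algebra of the Tannakian category $\F \subset \E \boxtimes \E$ with $(\E \boxtimes \E)\bt_\F \Vec \cong \E$; its Frobenius--Perron dimension is therefore $\FPdim(\D)\FPdim(\D')/\FPdim(\E)$, which by the relative dimension formula from the first paragraph equals $\FPdim(\C)$. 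So it suffices to show $G$ is injective (equivalently surjective), by \cite[Proposition 2.19]{EO}. Injectivity would follow from showing $G$ identifies $\D \bte \D'$ with a non-degenerate braided category whose image generates $\C$; alternatively, I would argue that $\Ker$ of $G$ corresponds to an \'etale algebra $B$ in $\D \bte \D'$, and by Theorem~\ref{etale in CD} (classification of \'etale algebras in a Deligne product) together with $\C$ being non-degenerate over $\E$ — so that the only connected \'etale algebra in $\C$ lying in the "diagonal" of the centralizer pairing is trivial — conclude $B = \be$.

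The main obstacle I expect is the last step: controlling the kernel of $G$ and making the \'etale-algebra argument work over $\E$ rather than over $\Vec$. In the classical case one uses that $\D \boxtimes \D'$ has symmetric center $\D' \boxtimes \D$-ish and a clean non-degeneracy count; over $\E$ the analogous bookkeeping of symmetric centers of $\D \bte \D'$ and of the base-change algebras requires the relative version of \eqref{prodFP} and a careful check that no nontrivial connected \'etale algebra of $\C$ can arise as the image of $I_\otimes(\be)$-type algebra under $G$. I would handle this by passing to de-equivariantizations when $\E$ is Tannakian (reducing to the known non-degenerate case via Proposition~\ref{De-equivariantization commutes with centralizers} and Proposition~\ref{Center of the de-equivariantization}), and treat the general $\E$ (which for the applications is $\sVec$) either directly or by a further de-equivariantization trick; this reduction step is where the real work lies, everything else being a translation of \cite[Theorem 3.13]{DGNO1} into the relative setting.
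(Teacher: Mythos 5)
Your first two steps are exactly the paper's: the identity $\D''=\D\vee\C'=\D$ from \eqref{D''} gives that the symmetric center of $\D'$ is $\D'\cap\D''=\D'\cap\D=\E$, and \eqref{prodFP} with $\D\cap\C'=\E$ gives $\FPdim(\D)\FPdim(\D')=\FPdim(\C)\FPdim(\E)$. (Minor slip: you write ``$(\D')'=\E$''; you mean $\D'\cap(\D')'=\E$, since $(\D')'=\D''=\D$.)

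The genuine gap is in the last step, which you yourself flag as ``where the real work lies'': the equivalence $\C\cong\D\bte\D'$ is never actually established. Your plan to prove \emph{injectivity} of $G$ by controlling a ``kernel'' via an \'etale algebra, invoking Theorem~\ref{etale in CD} (which is stated for Deligne products over $\Vec$, not over $\E$) and then reducing to the Tannakian case by de-equivariantization, is both unnecessary and not carried out. The point you miss is that \emph{surjectivity}, not injectivity, is the easy direction, and by \cite[Proposition 2.19]{EO} together with your own computation $\FPdim(\D\bte\D')=\FPdim(\C)$ it already suffices: the dimension count shows $\FPdim(\D\vee\D')=\FPdim(\D)\FPdim(\D')/\FPdim(\E)=\FPdim(\C)$ (by \cite[Lemma 3.38]{DGNO1}), hence $\D\vee\D'=\C$, so the multiplication functor $\ot:\D\bt\D'\to\C$ is surjective and therefore so is its factorization through $\D\bte\D'$. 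In fact the paper avoids constructing $G$ and checking fullness/faithfulness altogether: since $\ot:\D\bt\D'\to\C$ is a surjective braided (hence central) functor, the correspondence between surjective central functors and connected \'etale algebras gives $\C\cong(\D\bt\D')_{R(\be)}$ where $R$ is the right adjoint of $\ot$; and because $R(\E)\subset\E\bt\E$, the algebra $R(\be)$ is precisely the canonical algebra in $\E\bt\E$ defining the product over $\E$, so $(\D\bt\D')_{R(\be)}\cong\D\bte\D'$ by definition of $\bte$ (Remark~\ref{over E = Amod}); moreover $R(\be)$ lies in the symmetric center $\E\bt\E$ of $\D\bt\D'$, so the equivalence is automatically braided over $\E$. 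No kernel analysis, no classification of \'etale algebras, and no de-equivariantization reduction is needed.
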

\begin{proof}
Since $\D$ is non-degenerate over $\E$, we have $\D \cap \D' \cong \E$.  By  \eqref{D''}  
we have $\D'' = \D \vee \C' =\D$. Hence, $\D'\cap \D'' = \D'\cap \D$ and $\D'$ is slightly degenerate.  

To establish equivalence \eqref{E Muger} first note that $\D \vee \D' =\C$. Indeed,   $\D \vee \D'$
is a fusion subcategory of $\C$ and  
\[
\FPdim(\D \vee \D') =  \frac{\FPdim(\D)\FPdim(\D')}{\FPdim(\E)} =\FPdim(\C)
\]
by \eqref{prodFP} and \cite[Lemma 3.38]{DGNO1}. The tensor multiplication defines a surjective
braided tensor functor 
\[
\ot: \D \bt \D' \to \D \vee \D' =\C.
\]  
Let $R: \C \to \D \bt \D' $ be the right adjoint of  $\ot$.  Since $R(\E)\subset \E \bt \E$
we conclude that $ \D \vee \D'  \cong (\D \bt \D')_{R(\be)} \cong \D \bte \D'$,
see Section~\ref{Prelim-etale} and Remark~\ref{over E = Amod}.
\end{proof}

Let $\E$ be a symmetric fusion category and let $\A$ be a fusion category
over $\E$ such that the composition of $\E\to \Z(\A)$ and the forgetful functor
$\Z(\A) \to \A$ is fully faithful. We will  denote  by $\Z(\A,\, \E)$ the 
centralizer of $\E$ in $\Z(\A)$.   Observe that $\Z(\A,\, \E)$ is a non-degenerate
braided fusion $\E$-category.
Let $F_\E : \Z(\A,\, \E) \to \A$ denote the forgetful
functor.  By  Theorem~\ref{insu}  the functor $F_\E$ is surjective.

\begin{corollary}
\label{e Muger}
Let $\C$ be a non-degenerate braided fusion category over $\E$. Then there is a  braided tensor equivalence over $\E$
\begin{equation}
\label{e center of sd}
\Z(\C,\, \E) \cong \C \bte\C^{rev}. 
\end{equation}
\end{corollary}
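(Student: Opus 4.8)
The plan is to deduce Corollary~\ref{e Muger} from Proposition~\ref{E Muger thm} by identifying the $\E$-subcategory of $\Z(\C,\,\E)$ spanned by the two canonical copies of $\C$ and $\C^{\rev}$ and checking that one of them is non-degenerate over $\E$. First I would recall from Section~\ref{dcenter} that for a braided fusion category $\C$ there are two braided embeddings $\C \to \Z(\C)$ and $\C^{\rev} \to \Z(\C)$ whose images centralize each other. Since $\C$ is a braided fusion category over $\E$, the composite $\E \to \C' \to \C \to \Z(\C)$ lands in the centralizer of $\E$, so both embeddings factor through $\Z(\C,\,\E)$, giving braided functors $\C \to \Z(\C,\,\E)$ and $\C^{\rev} \to \Z(\C,\,\E)$; these are still injective (being injective into $\Z(\C)$), are functors over $\E$, and their images centralize each other inside $\Z(\C,\,\E)$. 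Moreover, since $\E \subset \C'$, the image of $\E$ under either embedding is the common copy of $\E$ inside $\Z(\C,\,\E)$, so we get a single braided tensor functor over $\E$
\[
G_\E : \C \bte \C^{\rev} \to \Z(\C,\,\E).
\]

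Next I would verify the hypotheses of Proposition~\ref{E Muger thm} for the $\E$-subcategory $\D := \C \subset \Z(\C,\,\E)$. We have $\Z(\C,\,\E)$ is non-degenerate over $\E$ (this is the observation recorded just before Corollary~\ref{e Muger}), and $\D \cong \C$ is non-degenerate over $\E$ by hypothesis, i.e.\ $\D' _{\Z(\C,\,\E)} \cap \D = \E$ reduces to $\C' = \E$. So Proposition~\ref{E Muger thm} applies and yields a braided tensor equivalence over $\E$
\[
\Z(\C,\,\E) \cong \C \bte \C',
\]
where $\C'$ now denotes the centralizer of $\C$ inside $\Z(\C,\,\E)$. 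It remains to identify this centralizer with $\C^{\rev}$. The inclusion $\C^{\rev} \subset \C'$ holds because the two images centralize each other; for the reverse inclusion I would compare Frobenius--Perron dimensions: by \eqref{prodFP} applied in $\Z(\C,\,\E)$ together with $\FPdim(\Z(\C,\,\E)) = \FPdim(\C)^2/\FPdim(\E)$ (which follows from \eqref{dimZ}, \eqref{prodFP} and the non-degeneracy of the embedding $\E \to \Z(\C)$) and $\C \cap \C' = \E$, one computes $\FPdim(\C') = \FPdim(\C^{\rev}) = \FPdim(\C)$, so the inclusion $\C^{\rev} \hookrightarrow \C'$ is an equivalence. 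Substituting $\C' \cong \C^{\rev}$ into the equivalence above gives \eqref{e center of sd}.

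The main obstacle I expect is the bookkeeping needed to make everything work over $\E$ rather than over $\Vec$: one must check that the two canonical embeddings $\C, \C^{\rev} \to \Z(\C,\,\E)$ are genuinely morphisms in $\Fus^{br}_\E$ (i.e.\ they respect the structure maps $T$), that the fused functor $G_\E$ is well defined precisely because the two copies of $\E$ coincide, and that the dimension count correctly uses $\FPdim(\E)$ as the ``size'' of the common overlap. The actual equivalence is then immediate from Proposition~\ref{E Muger thm}; the one genuinely non-formal input is the computation $\FPdim(\Z(\C,\,\E)) = \FPdim(\C)^2/\FPdim(\E)$, which I would derive from \eqref{dimZ}, \eqref{prodFP}, and \cite[Lemma 3.38]{DGNO1}, exactly as in the proof of Proposition~\ref{E Muger thm}.
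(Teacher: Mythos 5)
Your proof is correct and follows essentially the same route as the paper: view $\C$ and $\C^{\rev}$ inside $\Z(\C)$, note that everything lands in $\Z(\C,\,\E)$, and apply Proposition~\ref{E Muger thm} to the copy of $\C$. The only (minor) difference is in identifying the centralizer of $\C$ in $\Z(\C,\,\E)$ with $\C^{\rev}$: you do this by the Frobenius--Perron dimension count $\FPdim(\Z(\C,\,\E))=\FPdim(\C)^2/\FPdim(\E)$, while the paper instead notes $\E=\C\cap\C^{\rev}$ and invokes M\"uger's lemma $(\C\cap\C^{\rev})'=\C\vee\C^{\rev}$ together with \eqref{D''}; both arguments are valid and interchangeable here.
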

\begin{proof}
Let us view $\C$ and $\C^\rev$ as fusion subcategories of $\Z(\C)$.
Clearly, they are centralizers of each other. 
By definition, $\Z(\C,\, \E)$ is the centralizer of $\E$ in $\Z(\C)$, therefore,
using \cite[Lemma 2.8]{Mu2} and \eqref{D''} we get
\[
\Z(\C,\, \E) = (\C \cap \C^\rev)' = \C \vee \C^{rev},
\] 
and  the result follows from Proposition~\ref{E Muger thm}.
\end{proof}

Let $A$ be a connected \'etale algebra in $\Z(\A,\, \E)$. The category 
$\Rep_\A(A)$ of $F_\E(A)$-modules in  $\A$  has a canonical structure
of a {\em multi-fusion} category over $\E$ (i.e., its unit object 
is not necessarily simple) with the embedding 
$\E \hookrightarrow  \Z(\Rep_\A(A))$
given by the free module functor, i.e., $X \mapsto X \ot F_\E(A)$
for all $X \in \E$.  

\begin{proposition}
\label{ZAT0}
There is a canonical braided tensor equivalence
\begin{equation}
\label{318}
\Z(\A,\, \E)_A^0\cong  \Z(\Rep_\A(A),\, \E).
\end{equation}
\end{proposition}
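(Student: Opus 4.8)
The plan is to deduce the statement from the non-relative case by an appropriate de-equivariantization, using the classification machinery of Section~\ref{Etale algebras in products} and Proposition~\ref{Center of the de-equivariantization}. Recall that the analogous non-relative statement, namely $\Z(\A)_A^0 \cong \Z(\Rep_\A(A))$ for a connected \'etale algebra $A \in \Z(\A)$, is essentially \cite[Corollary 3.30]{DMNO} (obtained from the description of the center of a category of modules). So the essential content here is to relativize this over $\E$, and the natural tool is that everything in sight is compatible with taking centralizers of $\E$ (Proposition~\ref{De-equivariantization commutes with centralizers}) and with de-equivariantization.

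First I would fix the Tannakian situation: if $\E$ is Tannakian, write $\E = \Rep(G)$, so that $\Z(\A,\E)$ is the centralizer of $\E$ in $\Z(\A)$ and, by definition, $\A$ is a fusion category over $\E$. The key observation is that $\A \boxtimes_\E \Vec$ and $\Rep_\A(A) \boxtimes_\E \Vec$ are ordinary fusion categories, and one has $\Z(\A \boxtimes_\E \Vec) \cong \Z(\A,\E) \boxtimes_\E \Vec$ by Proposition~\ref{Center of the de-equivariantization}. Under this identification the image $\bar A$ of $A$ in $\Z(\A \boxtimes_\E \Vec)$ is again a connected \'etale algebra, and the point is that $\Z(\A,\E)_A^0 \boxtimes_\E \Vec \cong \big(\Z(\A,\E)\boxtimes_\E\Vec\big)_{\bar A}^0$; this is because de-equivariantization commutes with passing to (local) modules over an algebra lying in the centralizer of $\E$, as in the constructions of Section~\ref{Prelim-etale} together with Remark~\ref{tensor over E = equiv}. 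On the other hand, $\Rep_\A(A)\boxtimes_\E\Vec \cong \Rep_{\A\boxtimes_\E\Vec}(\bar A)$. Applying the non-relative equivalence $\Z(\B)_{\bar A}^0\cong\Z(\Rep_\B(\bar A))$ with $\B = \A\boxtimes_\E\Vec$ and then re-equivariantizing (i.e., using that the $G$-action is recovered, and de-equivariantization is an equivalence of $2$-categories onto $\E$-categories) yields \eqref{318} in the Tannakian case. The canonicity of the equivalence follows from canonicity of each of the functors used.

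For general symmetric $\E$ — in particular $\E = \sVec$, which is the case actually needed — I would not have $\E$ Tannakian, so the de-equivariantization trick does not directly apply. Instead I would argue directly by comparing Frobenius--Perron dimensions and exhibiting a natural braided functor. There is a canonical braided tensor functor $\Z(\A,\E)_A^0 \to \Z(\Rep_\A(A),\E)$: an object of the left side is a local $A$-module $(Z, \gamma_Z)$ in $\Z(\A,\E)$, and $F_\E(Z)$ is naturally an $F_\E(A)$-module in $\A$, i.e., an object of $\Rep_\A(A)$, carrying a half-braiding with all of $\Rep_\A(A)$ because the half-braiding $\gamma_Z$ descends to $A$-modules precisely when $Z$ is local; this half-braiding centralizes $\E$ by construction, so we land in $\Z(\Rep_\A(A),\E)$. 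This functor is braided and visibly compatible with the $\E$-structures. To see it is an equivalence I would compare dimensions: using \eqref{FPdimCA0} relativized over $\E$ (which follows from \eqref{FPdimCA0} and $\FPdim(\E)$-bookkeeping, or from \cite[Corollary 3.32]{DMNO} applied after base change) one gets $\FPdim(\Z(\A,\E)_A^0) = \FPdim(\Z(\A,\E))/\FPdim(A)^2 \cdot \FPdim(\E)$, and on the other side $\FPdim(\Z(\Rep_\A(A),\E)) = \FPdim(\Rep_\A(A))^2\cdot\FPdim(\E)$ together with $\FPdim(\Rep_\A(A)) = \FPdim(\A)/\FPdim(F_\E(A)) = \FPdim(\A)/\FPdim(A)$ and $\FPdim(\Z(\A,\E)) = \FPdim(\A)^2/\FPdim(\E)$; these match. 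By \cite[Proposition 2.19]{EO} it then suffices to check the functor is injective (fully faithful), which reduces to the statement that a half-braiding on a local $A$-module in $\Z(\A,\E)$ is determined by its image in $\A$ together with its interaction with $\Rep_\A(A)$ — this is the same faithfulness argument as in the non-relative case \cite[Corollary 3.30]{DMNO}, carried out relative to $\E$.

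The main obstacle I expect is the bookkeeping of $\E$-structures and the precise meaning of ``multi-fusion category over $\E$'' for $\Rep_\A(A)$ when $A$ is not connected as an algebra in $\A$ (it is connected in $\Z(\A,\E)$, but $F_\E(A)$ need not be connected in $\A$, so $\Rep_\A(A)$ genuinely is only multi-fusion); one must check that the forgetful-and-half-braiding construction still produces the claimed equivalence, and that the relative center $\Z(-,\E)$ of a multi-fusion category over $\E$ behaves as expected. I would handle this by noting that $\Rep_\A(A)$ decomposes as a direct sum of module categories indexed by the blocks of $F_\E(A)$, but its relative center $\Z(\Rep_\A(A),\E)$ is still a genuine (connected-unit) braided fusion category, and verifying the dimension count and faithfulness block-by-block. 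The rest — naturality and braidedness of the comparison functor, compatibility with the $\E$-embeddings — is routine diagram-chasing of the kind already suppressed throughout Section~\ref{Prelim-over E}.
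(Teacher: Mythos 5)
Your overall instinct---deduce the relative statement from the known non-relative equivalence $\Z(\A)_A^0\cong\Z(\Rep_\A(A))$---is exactly the paper's route, but the paper's proof is a two-liner: that canonical equivalence (\cite[Corollary 4.5]{Sch}, \cite[Theorem 3.20]{DMNO}) intertwines the two embeddings of $\E$ (on one side $\E$ sits inside $\Z(\A)_A^0$ via the free local modules $X\mapsto X\ot A$, on the other inside $\Z(\Rep_\A(A))$ via $X\mapsto X\ot F_\E(A)$), hence it restricts to an equivalence between the centralizers of $\E$ on the two sides, which is precisely \eqref{318}. Neither your Tannakian de-equivariantization detour (whose re-equivariantization step and the compatibility of $\Z(-,\E)$ with the possibly non-faithful $\E$-structure on $\Rep_\A(A)$ are left unchecked) nor a hand-built comparison functor with a dimension count is needed.

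Moreover, your general-$\E$ argument has genuine gaps as written. The dimension bookkeeping is incorrect: with your own formula $\FPdim(\Z(\A,\E))=\FPdim(\A)^2/\FPdim(\E)$, your left-hand expression evaluates to $\FPdim(\A)^2/\FPdim(A)^2$ while your right-hand expression evaluates to $\FPdim(\A)^2\,\FPdim(\E)/\FPdim(A)^2$; these do not match, and neither equals the correct common value $\FPdim(\A)^2/(\FPdim(\E)\FPdim(A)^2)$, which moreover is only valid when $A\cap\E=\be$ (the proposition makes no such assumption, and when $A\cap\E\neq\be$ the formula $\FPdim(\C_A^0)=\FPdim(\C)/\FPdim(A)^2$ you lean on fails, e.g.\ for $\C=\E$ Tannakian and $A$ its regular algebra). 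Second, the identity $\FPdim(\Z(\B))=\FPdim(\B)^2$ underlying your right-hand count is false for multi-fusion categories in general (for indecomposable multi-fusion $\B=\bigoplus_{i,j}\B_{ij}$ one has $\Z(\B)\cong\Z(\B_{ii})$, of dimension $\FPdim(\B_{ii})^2$), and $\Rep_\A(A)$ is genuinely multi-fusion whenever $F_\E(A)$ is not connected; you flag this, but the block-by-block fix is not carried out and would change the numbers. In this situation the equality you need is itself a consequence of the non-relative theorem, so invoking it to get surjectivity is circular unless you cite that theorem---at which point the restriction argument above already finishes the proof without any dimension count.
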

\begin{proof}
There is a  braided equivalence $\Z(\A)_A^0 \cong  \Z(\Rep_\A(A))$, see \cite[Corollary 4.5]{Sch} and
\cite[Theorem 3.20]{DMNO}. It  commutes
with inclusions of  $\E$  and, hence, restricts to
a braided equivalence \eqref{318}.
\end{proof}

Let $A$ be a connected \'etale algebra in a non-degenerate braided fusion category $\C$ over $\E$
such that $A\cap \E=\be$.  This algebra $A$  can be considered as a connected \'etale algebra in $\C^{\rev}$
and in $\Z(\C,\, \E)$ via the embedding 
\[
\C^{\rev}= \E \bte \C^{\rev}\hookrightarrow \C \bte \C^{\rev}
\cong \Z(\C,\, \E). 
\]

Under equivalence \eqref{e center of sd} we have
\begin{equation}
\label{Vitia's gift to humanity}
\Z(\C,\, \E)_A \cong \C \bte \C^{\rev}_A \quad \mbox{and} \quad
\Z(\C,\, \E)_A^0 \cong \C \bte (\C_A^0)^{\rev}.
\end{equation}

\begin{corollary} 
\label{ZCA}
Let $\C$ be a non-degenerate braided fusion category and let $A\in \C$
be a connected \'etale algebra such that $A\cap \E =\be$. 
There is a braided equivalence 
\[
\Z(\C_A,\, \E)\cong  \C \bt_\E (\C_A^0)^{\rev}.
\]
In particular, the category $\C_A^0$ is  non-degenerate over $\E$.
\end{corollary}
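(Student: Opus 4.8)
The plan is to reduce the statement to Proposition~\ref{ZAT0} applied to the fusion category $\A = \C$ over $\E$ together with the identification \eqref{e center of sd}. First I would recall that, since $\C$ is non-degenerate over $\E$, Corollary~\ref{e Muger} gives a braided equivalence over $\E$
\[
\Z(\C,\, \E) \cong \C \bte \C^{\rev}.
\]
Now the algebra $A \in \C$ with $A \cap \E = \be$ sits inside $\Z(\C,\,\E)$ via the embedding $\C^{\rev} = \E \bte \C^{\rev} \hookrightarrow \C \bte \C^{\rev} \cong \Z(\C,\,\E)$, and by \eqref{Vitia's gift to humanity} we have
\[
\Z(\C,\, \E)_A^0 \cong \C \bte (\C_A^0)^{\rev}.
\]
So it remains to identify the left-hand side with $\Z(\C_A,\,\E)$.

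For that identification I would apply Proposition~\ref{ZAT0} with $\A$ replaced by $\C$ (viewed merely as a fusion category over $\E$, forgetting its braiding): there the algebra in question would be $A \in \Z(\C,\,\E)$, and the proposition gives
\[
\Z(\C,\, \E)_A^0 \cong \Z(\Rep_\C(A),\, \E).
\]
Here $\Rep_\C(A)$ is the category of $F_\E(A)$-modules in $\C$, which is exactly $\C_A$ (the forgetful functor $F_\E$ sends $A$ to the underlying étale algebra $A \in \C$, since by construction the composition $\E \to \Z(\C,\,\E) \to \C$ is fully faithful and $A$ came from $\C$ in the first place). Combining these two chains of equivalences yields
\[
\Z(\C_A,\, \E) \cong \Z(\C,\,\E)_A^0 \cong \C \bte (\C_A^0)^{\rev},
\]
which is the desired equivalence. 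Finally, the last sentence of the corollary follows because the right-hand side $\C \bte (\C_A^0)^{\rev}$ visibly has the form $\Z(\C_A,\,\E)$, and by Corollary~\ref{e Muger} (or rather its characterization of non-degeneracy over $\E$) a braided fusion $\E$-category $\B$ is non-degenerate over $\E$ precisely when $\Z(\B,\,\E) \cong \B \bte \B^{\rev}$; alternatively one reads off non-degeneracy of $\C_A^0$ directly from the tensor-product decomposition, since $\C \bte (\C_A^0)^{\rev}$ being a relative center forces both tensor factors to be non-degenerate over $\E$ by Proposition~\ref{E Muger thm}.

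The main obstacle I anticipate is bookkeeping rather than conceptual: one must check that the algebra $A$, transported through the equivalence $\Z(\C,\,\E) \cong \C \bte \C^{\rev}$ of Corollary~\ref{e Muger} and then through the forgetful functor $F_\E : \Z(\C,\,\E) \to \C$ of Proposition~\ref{ZAT0}, really does correspond to the original étale algebra $A \in \C$ and hence that $\Rep_\C(A) = \C_A$ as fusion categories over $\E$. This requires tracking the various embeddings of $\E$ and verifying the hypothesis of Proposition~\ref{ZAT0} that the composition $\E \to \Z(\C) \to \C$ is fully faithful, which holds here because $\E \subset \C'$ is a braided tensor embedding by the definition of a braided fusion category over $\E$. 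Once these compatibilities are in place, the two displayed equivalences splice together with no further work.
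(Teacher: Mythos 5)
Your proof is correct and follows essentially the same route as the paper, whose proof of this corollary is precisely ``a direct consequence of equivalence \eqref{Vitia's gift to humanity} and Proposition~\ref{ZAT0}''; your bookkeeping identifying $\Rep_\C(A)$ with $\C_A$ just makes explicit what the paper leaves implicit. The only loose point is the citation for the final non-degeneracy claim: neither Corollary~\ref{e Muger} nor Proposition~\ref{E Muger thm} literally states the converse you invoke, but the claim does follow because $\Z(\C_A,\,\E)$ is non-degenerate over $\E$ and the two factors of $\C \bte (\C_A^0)^{\rev}$ centralize each other, so the symmetric center of $\C_A^0$ is forced into $\E$.
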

\begin{proof}
This is a direct consequence of equivalence \eqref{Vitia's gift to humanity}
and  Proposition~\ref{ZAT0}.
\end{proof}


\begin{proposition}
\label{e transitivity}
Let $\A_1,\, \A_2$ be  fusion categories over $\E$. There is an equivalence of braided fusion categories over $\E$
\begin{equation}
\label{Alexei's equivalence+}
\Z(\A_1,\, \E) \bte \Z(\A_2,\, \E) \cong \Z(\A_1 \bte \A_2,\, \E).
\end{equation}
\end{proposition}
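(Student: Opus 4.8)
The plan is to present $\A_1\bte\A_2$ as a category of modules over an \'etale algebra that happens to be supported in a symmetric subcategory, and then to read off its relative center from Proposition~\ref{ZAT0}. Recall from Remark~\ref{over E = Amod} that $\A_1\bte\A_2 = \Rep_{\A_1\bt\A_2}(A)$, where $A = (T_{\A_1}\bt T_{\A_2})(R(\be))$ is the connected \'etale algebra in $\Z(\A_1\bt\A_2)\cong\Z(\A_1)\bt\Z(\A_2)$ induced by the right adjoint $R:\E\to\E\bt\E$ of the tensor product functor $\ot:\E\bt\E\to\E$. Since $\E$ is symmetric, each $T_{\A_i}(\E)$ is a symmetric subcategory of $\Z(\A_i)$, hence $T_{\A_i}(\E)\subseteq\Z(\A_i,\E)$, so $A$ lies in $\Z(\A_1,\E)\bt\Z(\A_2,\E)$. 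Because the centralizer of $\D_1\bt\D_2$ in $\C_1\bt\C_2$ is $\D_1'\bt\D_2'$, this subcategory is precisely $\Z(\A_1\bt\A_2,\,\E\bt\E)$, the centralizer of $\E\bt\E$ in $\Z(\A_1\bt\A_2)$, so $A$ is a connected \'etale algebra there. Moreover $A$ lies in $\E\bt\E$, which is the symmetric center of $\Z(\A_1,\E)\bt\Z(\A_2,\E)$, so every $A$-module is local and, by the construction of the Deligne tensor product of braided $\E$-categories (Remark~\ref{over E = Amod}), one has $(\Z(\A_1,\E)\bt\Z(\A_2,\E))_A = (\Z(\A_1,\E)\bt\Z(\A_2,\E))_A^0 = \Z(\A_1,\E)\bte\Z(\A_2,\E)$ for this same $A$.

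Next I would apply Proposition~\ref{ZAT0} to the triple $(\E\bt\E,\ \A_1\bt\A_2,\ A)$; its hypotheses hold because $\E\bt\E$ is symmetric fusion and the composition $\E\bt\E\to\Z(\A_1\bt\A_2)\to\A_1\bt\A_2$ is the Deligne product of the two fully faithful functors $\E\to\A_i$, hence fully faithful. This yields a braided equivalence
\[
\Z(\A_1\bt\A_2,\,\E\bt\E)_A^0 \;\cong\; \Z\bigl(\Rep_{\A_1\bt\A_2}(A),\,\E\bt\E\bigr).
\]
By the first paragraph the left-hand side is $\Z(\A_1,\E)\bte\Z(\A_2,\E)$. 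For the right-hand side, $\Rep_{\A_1\bt\A_2}(A) = \A_1\bte\A_2$, and I would argue that the image of $\E\bt\E$ in $\Z(\A_1\bte\A_2)$ under $T_{\A_1}\bt T_{\A_2}$ followed by $\Z(\A_1\bt\A_2)\to\Z(\A_1\bt\A_2)_A^0\cong\Z(\A_1\bte\A_2)$ generates the canonical copy of $\E$: by Remark~\ref{tensor over E = equiv} the algebra $A$ is the regular algebra of the Tannakian subcategory $\F\subset\E\bt\E$ with $(\E\bt\E)\bt_\F\Vec\cong\E$, so this image is exactly $(\E\bt\E)\bt_\F\Vec\cong\E$. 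Since a centralizer depends only on the fusion subcategory generated, the right-hand side equals $\Z(\A_1\bte\A_2,\E)$, and we obtain the asserted equivalence.

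The remaining point is that the equivalence produced is one of braided fusion categories \emph{over} $\E$, i.e.\ that it intertwines the two embeddings of $\E$; this should follow by tracking $\E$ through the chain of identifications, all of which — Proposition~\ref{ZAT0}, the identification $\Z(\A_1\bt\A_2)\cong\Z(\A_1)\bt\Z(\A_2)$, and the passage from $A$-modules to local $A$-modules — are manifestly compatible with the inclusions of $\E$. I expect the main obstacle to lie precisely in this last bookkeeping: pinning down that the image of $\E\bt\E$ in the center of the de-equivariantization is literally the structure copy of $\E$ (not merely an equivalent subcategory), so that the $\E\bt\E$-centralizer and the $\E$-centralizer coincide on the nose, and that every functor invoked respects the $\E$-structure. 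Everything else is a routine assembly of results already proved.
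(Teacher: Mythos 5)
Your argument is correct and there is no circularity (Proposition~\ref{ZAT0} and the two Remarks you cite all precede Proposition~\ref{e transitivity}), but it takes a genuinely different route from the paper. The paper never touches the algebra $A$ directly: it writes $\A_1\bte\A_2=(\A_1\bt\A_2)\bt_\F\Vec$ for the Tannakian $\F\subset\E\bt\E$ of Remark~\ref{tensor over E = equiv}, invokes Proposition~\ref{Center of the de-equivariantization} to get $\Z(\A_1\bte\A_2)\cong\Z(\A_1\bt\A_2,\F)\bt_\F\Vec$, and then uses Proposition~\ref{De-equivariantization commutes with centralizers} to identify the centralizer of $(\E\bt\E)\bt_\F\Vec\cong\E$ in that category with $\Z(\A_1\bt\A_2,\E\bt\E)\bt_\F\Vec=\bigl(\Z(\A_1,\E)\bt\Z(\A_2,\E)\bigr)\bt_\F\Vec=\Z(\A_1,\E)\bte\Z(\A_2,\E)$. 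You instead apply Proposition~\ref{ZAT0} with the symmetric category $\E\bt\E$ and the diagonal \'etale algebra $A=(T_{\A_1}\bt T_{\A_2})(R(\be))$, using that $A$ centralizes $\Z(\A_1\bt\A_2,\E\bt\E)$ so that all $A$-modules are local and the left-hand side is literally the defining category $(\Z(\A_1,\E)\bt\Z(\A_2,\E))_A$ of $\Z(\A_1,\E)\bte\Z(\A_2,\E)$. What your route buys is uniformity: it stays inside the \'etale-algebra/module formalism already used for Proposition~\ref{ZAT0} and Corollary~\ref{ZCA} and avoids the Tannakian-specific de-equivariantization results; the hypotheses of Proposition~\ref{ZAT0} are indeed met in your application (full faithfulness is required only for $\E\bt\E\to\A_1\bt\A_2$, which is the product of the two faithful compositions). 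What the paper's route buys is that the point you rightly flag as the residual bookkeeping --- that the image of $\E\bt\E$ under the free module functor generates exactly the structure copy of $\E$ in $\Z(\A_1\bte\A_2)$, so that the $\E\bt\E$- and $\E$-centralizers coincide --- is absorbed into the identification $(\E\bt\E)\bt_\F\Vec\cong\E$ inside the de-equivariantization; your justification of it via Remark~\ref{tensor over E = equiv} and the fact that a centralizer depends only on the generated fusion subcategory is adequate. Both your argument and the paper's leave the verification that the resulting equivalence is one \emph{over} $\E$ at the same implicit level, so you are not missing anything the paper supplies.
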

\begin{proof}
Start with $\E \bt \E \hookrightarrow \Z(\A_1\bt \A_2)$ and let $\F$ 
denote the Tannakian subcategory of $\E \bt \E$ such that 
$\A_1 \bte\A_2 = (\A_1\bt \A_2) \bt_\F \Vec$, see Remark~\ref{tensor over E = equiv}.

Consider the centralizers of $\E \bt \E$ and $\F$  in $\Z(\A_1\bt \A_2)$.
We have embedding $\Z(\A_1\bt \A_2,\E \bt \E)\subset \Z(\A_1\bt \A_2,\F)$.

By Proposition~\ref{Center of the de-equivariantization}   braided fusion category
$\Z(\A_1 \bte \A_2)$ is equivalent to  the de-equivariantization
$\Z(\A_1\bt \A_2,\F)\bt_\F \Vec$.  The category $\Z(\A_1 \bte \A_2,\, \E)$ therefore is
identified with the centralizer of $(\E \bt \E) \bt_\F \Vec \cong \E$ in
$\Z(\A_1\bt \A_2,\F) \bt_\F \Vec$.

On the other hand, $\Z(\A_1,\, \E) \bt \Z(\A_2,\, \E)  = \Z(\A_1\bt \A_2,\, \E\bt \E) $  
and,  therefore,  $\Z(\A_1,\, \E) \bte \Z(\A_2,\, \E)$
is identified with $\Z(\A_1\bt \A_2,\E \bt \E) \bt_\F \Vec$.

Thus, existence of the braided equivalence \eqref{Alexei's equivalence+}
follows from Proposition~\ref{De-equivariantization commutes with centralizers}.
\end{proof}

\subsection{Slightly degenerate braided fusion categories}

\begin{definition}  
\label{s-category}
A braided  {\em s-category} is a braided fusion category over $\sVec$.
\end{definition}

In other words,  a braided s-category is a braided fusion category $\C$
equipped with a braided tensor functor $T: \sVec \to \C'$.  Such
a functor $T$ is necessarily injective.

An {\em s-subcategory} of a braided  s-category $\C$ is a fusion subcategory 
$\C_1\subset \C$ containing $T(\sVec)$. Note that an s-subcategory of a braided  s-category is a fusion category over $\sVec$.

\begin{definition}
\label{s-simple}
A braided s-category $\C$ is called {\em s-simple} if it is non-pointed 
and  has no s-subcategories except $\sVec$ and $\C$.
\end{definition}

\begin{remark}
An $s$-simple category is slightly degenerate (see Definition~\ref{def sldeg}).
\end{remark}

Let $\C_1,\,\dots,\,\C_n$ be braided s-categories. One defines
their tensor product over $\sVec$ by iterating  the construction of Section~\ref{Prelim-over E}. 
The equivalence class of the resulting braided s-category does not depend on the order 
in which products are taken.  By transitivity of de-equivariantization (see \cite[Lemma 4.32]{DGNO1})
there is a braided tensor equivalence
\[
\C_1 \bt_{\sVec} \cdots \bt_{\sVec} \C_n \cong (\C_1 \bt \cdots \bt \C_n)  \bt_{\tilde{\E}} \Vec,
\]
where $\tilde{\E}$ is the maximal Tannakian subcategory of  $(\sVec)^{\bt n} \subset \C_1 \bt \cdots \bt \C_n$.

Clearly, $\tilde{\E} \cong \Rep(G)$ where $G \cong (\mathbb{Z}/
2\mathbb{Z})^{n-1}$ is an elementary Abelian $2$-group. 
There is canonical action of the group $G$ on
the tensor product 
$\C_1\bts  \cdots \bts \C_n$ by braided tensor autoequivalences, see \cite[Section 4]{DGNO1}.

\begin{proposition}
\label{action identical}
Let $\C_1,\dots, \C_n$ be  slightly degenerate braided fusion categories.
The canonical action of  $G$
on $\C_1\bts  \cdots \bts \C_n$ is identical on objects, i.e., $g(X)\cong X$
for all objects $X\in \C_1\bts  \cdots \bts \C_n$ and all $g \in G$.
\end{proposition}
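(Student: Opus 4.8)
The plan is to reduce the statement to understanding the action of a single generator of $G$ and to show that it fixes every simple object. First I would recall the concrete description of the canonical $G$-action: writing $\D = \C_1\bt\cdots\bt\C_n$, the tensor product over $\sVec$ is the de-equivariantization $\D\bt_{\tilde\E}\Vec$, where $\tilde\E\cong\Rep(G)$ is the maximal Tannakian subcategory of $(\sVec)^{\bt n}\subset\D$, and the $G$-action on $\D\bt_{\tilde\E}\Vec = \Rep_\D(\mathrm{Fun}(G))$ is the residual action coming from $\tilde\E\subset\Z(\D)$, i.e. $g\in G$ acts by tensoring with the corresponding invertible object in the image of $\tilde\E$. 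Concretely, the simple objects of $\tilde\E$ (as characters of $G$) are generated by the objects $\delta_i\bt\delta_j$, where $\delta_i$ is the generator of $\C_i'\cong\sVec$; so it suffices to show that for every simple object $M$ of $\C_1\bts\cdots\bts\C_n$ one has $(\delta_i\bt\delta_j)\ot M\cong M$ in that category, for all $i\neq j$.

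The key step is to analyze what a simple $M\in\Rep_\D(\mathrm{Fun}(G))$ looks like as a $\D$-module and to use the hypothesis that each $\C_i$ is slightly degenerate. The crucial input is the cited fact \cite[Lemma 5.4]{Mu1}: if $\C_i$ is slightly degenerate with $\C_i'$ generated by $\delta_i$, then $\delta_i\ot X\not\cong X$ for every simple $X$ in $\C_i$. Hence in $\D$, every simple object $Y_1\bt\cdots\bt Y_n$ has the property that its full orbit under tensoring by the subgroup $\langle\delta_1\bt\cdots\bt\be,\dots,\be\bt\cdots\bt\delta_n\rangle\cong(\bZ/2\bZ)^n$ consists of $2^n$ distinct simple objects; restricting to the Tannakian subgroup $\tilde G\cong(\bZ/2\bZ)^{n-1}$ (those tuples with an even number of $\delta$'s), the orbit of each simple object of $\D$ under this subgroup still has full size $2^{n-1}$, because no nonzero tuple in $\tilde G$ can fix a simple object (as that tuple always has at least one coordinate equal to $\delta_i$, which moves $Y_i$). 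Therefore the algebra $A=\mathrm{Fun}(G)$ acts freely on simple objects of $\D$ in the relevant sense, which forces every simple $A$-module $M$ to be, as a $\D$-object, of the form $(Y_1\bt\cdots\bt Y_n)\ot A$ — i.e., a free module of rank one over $A$ on a $\tilde G$-orbit of size $2^{n-1}$. From this explicit shape, tensoring $M$ by the invertible object associated to any $g\in G$ simply permutes the $\D$-summands of $M$ within the same $\tilde G$-coset-indexed family, hence gives an isomorphic $A$-module: $g(M)\cong M$.

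Finally I would note that this establishes $g(X)\cong X$ on simple objects, and since any object is a direct sum of simples and the autoequivalences $g$ are additive, it follows for all objects. The main obstacle I expect is making the "free action on simples" argument fully rigorous: one must be careful that the maximal Tannakian subcategory $\tilde\E$ really is carried by the $\tilde G=(\bZ/2\bZ)^{n-1}$ of even tuples and that its nonzero objects genuinely act fixed-point-freely on $\O(\D)$ — this is where slight degeneracy of \emph{each} factor (not merely of the product) is used — and then to convert "the stabilizer of every simple $\D$-object in $\tilde G$ is trivial" into the statement that every simple $A$-module is induced (free of rank one) from a $\D$-simple. The latter is a standard fact about de-equivariantization / modules over $\mathrm{Fun}(G)$ when $G$ acts freely on the set of simple objects, but it should be stated and invoked carefully, perhaps via \eqref{FPdimCA} to compare Frobenius–Perron dimensions and conclude that the free modules on $\tilde G$-orbits exhaust the simple $A$-modules.
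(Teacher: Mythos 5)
Your core combinatorial step is exactly the paper's: using M\"uger's lemma that $\delta_i\ot Y\not\cong Y$ for simple $Y$ in a slightly degenerate $\C_i$, you show that no nontrivial even $\delta$-tuple fixes a simple object of $\D=\C_1\bt\cdots\bt\C_n$, hence every free $A$-module $Y\ot A$ (with $A=\mathrm{Fun}(G)$ the sum of the $2^{n-1}$ even tuples) is a multiplicity-free sum of $2^{n-1}$ distinct simples of $\D$ and is itself simple; combined with surjectivity of the free-module functor this gives that every simple $A$-module is free. That is precisely how the paper argues, and your suggestion to make ``free orbits $\Rightarrow$ simple modules are induced'' rigorous via surjectivity of $F_A$ (or an $\FPdim$ count) is the same device.

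However, your identification of the canonical $G$-action is wrong, and this is a genuine gap as written. The canonical action of $G$ on $\D\bt_{\tilde\E}\Vec=\D_A$ is \emph{not} given by tensoring with the invertible objects of $\tilde\E$: those objects become trivial after de-equivariantization (for any character object $L\in\tilde\E$ one has $L\ot A\cong A$ as $A$-modules, so $L\ot M\cong M$ for \emph{every} $A$-module $M$). With your description the proposition would be vacuously true with no use of slight degeneracy at all --- a sign that the description cannot be the canonical action, which in general genuinely moves objects when some $\delta_i$ fixes a simple object. The canonical action is induced by the translation action of $G$ on the algebra $A=\mathrm{Fun}(G)$, i.e.\ $g$ twists the module structure; this action visibly satisfies $g(Y\ot A)\cong Y\ot A$ for free modules (translate the $A$-factor), but need not fix non-free modules. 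So the correct logical chain is: the action fixes free modules, and by your (and the paper's) argument all simple modules are free, hence $g(X)\cong X$ for all $X$. Your final step (``tensoring by the invertible object permutes the $\D$-summands'') proves a statement about a functor that is isomorphic to the identity anyway, and does not connect the freeness result to the canonical action. The repair is short --- replace the tensoring description by the translation description and invoke freeness --- but it is needed for the proof to prove the stated proposition.
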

\begin{proof}
Let $A=\mbox{Fun}(G)$ be the regular algebra in
$\tilde{\E}$.
Recall that  the action of $G$ on $\C_1\bts\,  \cdots \,\bts \C_n
= (\C_1\bt \,\cdots \,\ \bt \C_n)_A$ is given by the action of $G$
on $A$ by translations.  In particular, $g(X)\cong X$ for every free $A$-module $X = A \ot Y,\, Y \in 
\C_1\bt \, \cdots \,\bt \C_n$ and $g \in G$.  Thus, it suffices to show
that every simple $A$-module in $\C_1\bts \, \cdots \,\bts \C_n$ is free.
Since the functor of taking free $A$-module 
\[
(\C_1\bt\, \cdots \,\bt \C_n) \to (\C_1\bt \,\cdots \,\bt \C_n)_A : Y \mapsto  Y \ot A
\]
is surjective, the last property is  equivalent to every free $A$-module being simple. 


In our situation  $A$ is equal to the direct sum
of objects $\delta_1 \bt\, \cdots \, \bt \delta_n$
in $\C_1\bt \,\cdots\, \bt \C_n$, where each $\delta_i,\, i=1,\dots,n$,
is isomorphic to $\be$ or $\delta$ and the number of $i's$ for which $\delta_i \cong \delta$
is even.  Since $\delta \ot Y \not\cong Y$  for any simple object $Y$ of a slightly
degenerate category (see Section~\ref{Prelim-brcat}) 
it follows that every free  $A$-module in $\C_1\bt\, \cdots\, \bt \C_n$
is a direct sum of $2^{n-1}$ non-isomorphic simple objects in $\C_1\bt \,\cdots\, \bt \C_n$.
It is easy to see that every such $A$-module is necessarily simple.
\end{proof}

\begin{corollary}
\label{lattice iso}
Let $\C_1,\,\dots,\, \C_n$ be slightly degenerate braided fusion categories.
Let $\tilde{\E}$ denote the maximal Tannakian subcategory
of $(\sVec)^{\bt n} \hookrightarrow \C_1\bt\, \cdots \,\bt \C_n$. 
There is an isomorphism between 
the lattice of fusion subcategories of $\C_1\bt \,\cdots \bt \,\C_n$ containing $\tilde{\E}$ and 
the lattice of fusion subcategories of   $\C_1\bts \, \cdots \, \bts \C_n$
given by
\[
\D \mapsto \D \bt_{\tilde{\E}} \Vec. 
\]
\end{corollary}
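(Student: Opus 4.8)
The plan is to deduce Corollary~\ref{lattice iso} from the general equivariantization/de-equivariantization dictionary, using Proposition~\ref{action identical} as the crucial input. Write $\C = \C_1 \bt \cdots \bt \C_n$, $G \cong (\bZ/2\bZ)^{n-1}$, and $\tilde\E = \Rep(G) \subset (\sVec)^{\bt n} \subset \C$, so that $\C_1 \bts \cdots \bts \C_n = \C \bt_{\tilde\E} \Vec$ and conversely $\C = (\C \bt_{\tilde\E}\Vec)^G$, the $G$-equivariantization. The general theory (see \cite[Section 4]{DGNO1}) gives a bijection between fusion subcategories of a de-equivariantization $\mD \bt_{\tilde\E}\Vec$ that are stable under the residual $G$-action and $G$-stable fusion subcategories of $\mD$ containing $\tilde\E$; restricting to $\mD = \C$ gives exactly the claimed map $\D \mapsto \D\bt_{\tilde\E}\Vec$, provided we know two things: first, that \emph{every} fusion subcategory of $\C_1\bts\cdots\bts\C_n$ is $G$-stable, and second, that every fusion subcategory of $\C$ containing $\tilde\E$ is $G$-stable. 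Both follow from Proposition~\ref{action identical}: since $G$ acts trivially on objects of $\C_1\bts\cdots\bts\C_n$, it preserves every fusion subcategory (a fusion subcategory is determined by its set of isomorphism classes of objects); and since the $G$-action on $\C$ comes from permuting the summands of $A = \mathrm{Fun}(G)$ after passing to modules, an analogous argument — or the explicit description of the $G$-action on $\C$ recalled in the proof of Proposition~\ref{action identical} — shows $G$ acts on $\O(\C)$ by a permutation that fixes each object lying in a $\tilde\E$-containing subcategory; more simply, the correspondence already matches $G$-stable subcategories, and triviality of the action on the de-equivariantized side forces triviality of the relevant part on the equivariantized side.

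Concretely, I would argue as follows. The functor $\D \mapsto \D\bt_{\tilde\E}\Vec$ from $\tilde\E$-containing fusion subcategories of $\C$ to fusion subcategories of $\C\bt_{\tilde\E}\Vec$ is well-defined, inclusion-preserving, and injective (one recovers $\D$ as the preimage, i.e.\ the equivariantization $(\D\bt_{\tilde\E}\Vec)^G$, or equivalently as the subcategory of $\C$ generated by objects whose image lies in $\D\bt_{\tilde\E}\Vec$). For surjectivity, let $\B \subset \C_1\bts\cdots\bts\C_n$ be any fusion subcategory. By Proposition~\ref{action identical}, $\B$ is $G$-stable, so it is the image of a $G$-stable $\tilde\E$-containing fusion subcategory $\D\subset\C$ under de-equivariantization — this is precisely the statement that de-equivariantization induces a bijection between $G$-stable subcategories on the two sides, applied in the surjective direction. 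Hence $\B = \D\bt_{\tilde\E}\Vec$, proving the map is onto. Finally, for the inverse direction one checks that every $\tilde\E$-containing fusion subcategory $\D$ of $\C$ is automatically $G$-stable, so that the map is a bijection (not merely a bijection onto $G$-stable subcategories on each side); this again follows because the $G$-action on $\O(\C)$ only permutes objects within the fibers over $\C\bt_{\tilde\E}\Vec$, and such fibers are either entirely inside or entirely outside any $\tilde\E$-containing $\D$.

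The main obstacle is making precise and citing correctly the ``de-equivariantization induces a lattice isomorphism between $G$-stable subcategories'' principle: one must ensure that $\tilde\E = \Rep(G)$ sits inside the symmetric center of $\C$ (it does, being inside $(\sVec)^{\bt n}$, which centralizes everything in $\C = \C_1\bt\cdots\bt\C_n$ since each $\C_i$ is slightly degenerate) so that de-equivariantization of braided categories and their subcategories behaves functorially, and that Proposition~\ref{De-equivariantization commutes with centralizers} and the general results of \cite[Section 4]{DGNO1} apply. The only genuinely new ingredient beyond that boilerplate is Proposition~\ref{action identical}, which removes the $G$-stability hypothesis on the de-equivariantized side; everything else is an assembly of cited facts.

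\begin{proof}
Write $\C = \C_1 \bt \cdots \bt \C_n$ and $G \cong (\bZ/2\bZ)^{n-1}$, so that $\tilde\E \cong \Rep(G)$ and $\C_1\bts\cdots\bts\C_n = \C\bt_{\tilde\E}\Vec$. Since each $\C_i$ is slightly degenerate, $(\sVec)^{\bt n} \subset \C'$, so $\tilde\E \subset \C'$ and $\C$ is a braided fusion category over $\tilde\E$ on which de-equivariantization is defined, with residual $G$-action on $\C\bt_{\tilde\E}\Vec$ whose equivariantization recovers $\C$.

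By the general theory of de-equivariantization (\cite[Section 4]{DGNO1}), the assignment $\D \mapsto \D\bt_{\tilde\E}\Vec$ is an inclusion-preserving bijection between the lattice of $G$-stable fusion subcategories of $\C$ containing $\tilde\E$ and the lattice of $G$-stable fusion subcategories of $\C\bt_{\tilde\E}\Vec$, with inverse $\B \mapsto \B^G$ (the subcategory of $\C$ generated by objects mapping into $\B$).

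It remains to remove both $G$-stability hypotheses. By Proposition~\ref{action identical} the $G$-action on $\C_1\bts\cdots\bts\C_n$ is trivial on isomorphism classes of objects; since a fusion subcategory is determined by its set of isomorphism classes of simple objects, \emph{every} fusion subcategory of $\C_1\bts\cdots\bts\C_n$ is $G$-stable. For the other side, recall from the proof of Proposition~\ref{action identical} that $G$ acts on $\C = (\C_1\bt\cdots\bt\C_n)_A$, $A=\mathrm{Fun}(G)$, by translations on $A$; the simple objects of $\C$ over a given simple object $Y$ of $\C_1\bt\cdots\bt\C_n$ form a single $G$-orbit, namely $\{\delta_1\bt\cdots\bt\delta_n \ot Y\}$ with an even number of $\delta_i\cong\delta$. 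Hence if a fusion subcategory $\D\subset\C$ contains $\tilde\E$, it contains all $\delta_1\bt\cdots\bt\delta_n$ with an even number of $\delta$'s, so with any simple $X$ it contains the whole $G$-orbit of $X$; thus $\D$ is $G$-stable. Therefore $\D\mapsto\D\bt_{\tilde\E}\Vec$ is the desired lattice isomorphism.
\end{proof}
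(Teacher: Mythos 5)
Your proof is correct and follows essentially the same route as the paper: cite the de-equivariantization dictionary (\cite[Proposition 4.30(i)]{DGNO1}, which gives the bijection between $\tilde\E$-containing fusion subcategories and $G$-stable fusion subcategories of the de-equivariantization) and then invoke Proposition~\ref{action identical} to conclude that every fusion subcategory of $\C_1\bts\cdots\bts\C_n$ is $G$-stable. Your extra step removing ``$G$-stability'' on the equivariantized side is not needed for the cited statement (and your notation there briefly conflates $\C_1\bt\cdots\bt\C_n$ with its de-equivariantization $(\C_1\bt\cdots\bt\C_n)_A$), but the claim you make is true and harmless.
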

\begin{proof}
By  \cite[Proposition 4.30(i)]{DGNO1} there is an isomorphism between the lattice 
of fusion subcategories of $\C_1\bt \,\cdots \bt \,\C_n$ containing $\tilde{\E}$
and  the lattice of $G$-stable fusion subcategories of   $\C_1\bts \, \cdots \, \bts \C_n
= (\C_1\bt \,\cdots \bt \,\C_n) \bt_{\tilde{\E}} \Vec$ given by $\D \mapsto \D \bt_{\tilde{\E}} \Vec$.
So the statement follows from Proposition~\ref{action identical}.
\end{proof}

\subsection{Decomposition Theorem}

The next Theorem is a special case of a  result from \cite{DGNO2}.
We include the proof for the reader's convenience.

\begin{theorem}
\label{decomposition theorem} 
Let $\C$ be a slightly degenerate braided fusion category.
Suppose $\C$ has no Tannakian
subcategories other than $\Vec$. Then
\begin{enumerate}
\item[(1)] There exist s-simple subcategories $\C_1,\dots, \C_m \subset \C$ 
determined uniquely up to a permutation of indices such that
\begin{equation}
\label{decomposition S}
\C \cong  \C_{pt} \bts \C_1 \bts \cdots \bts \C_m.
\end{equation}
\item[(2)] Every fusion subcategory $\D \subset \C$ has the form
\[
\D =  \D_{pt} \bts \C_{i_1} \bts \cdots \bts \C_{i_k}
\]
for a subset $\{i_1,\dots, i_k\} \subset \{1,\dots,m\}$.
\end{enumerate}
\end{theorem}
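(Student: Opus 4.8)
The strategy is to reduce the decomposition over $\sVec$ to a decomposition of an ordinary non-degenerate braided fusion category, to which Müger's classical decomposition theorem (\cite{Mu2}, \cite[Theorem 3.13]{DGNO1}) applies. Concretely, $\C$ is slightly degenerate with $\C' = \sVec$; pick the generator $\delta$ of $\sVec$. I would first separate off the pointed part: applying Proposition~\ref{E Muger thm} with $\E = \sVec$ to the $\sVec$-subcategory $\C_{pt}$ (which is non-degenerate over $\sVec$, being the Witt-trivial-type pointed slightly degenerate category — here one uses that $\C_{pt}$ contains $\sVec$ and its symmetric center inside $\C_{pt}$ is exactly $\sVec$), we get a braided equivalence over $\sVec$
\[
\C \cong \C_{pt} \bts \C^{(0)},
\]
where $\C^{(0)} = (\C_{pt})'$ is again non-degenerate over $\sVec$ and now has trivial pointed part (any invertible object of $\C^{(0)}$ would lie in $(\C_{pt})' \cap \C_{pt}$-hmm, more carefully: any invertible in $\C^{(0)}$ together with $\C_{pt}$ generates a larger pointed subcategory, and a pointed slightly degenerate category splits as $\C_0 \bt \sVec$ by \cite[Prop.~2.6(ii)]{ENO2}, forcing it into $\C_{pt}$; so $\C^{(0)}_{pt} = \sVec$).

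So it remains to decompose a slightly degenerate $\C^{(0)}$ with $\C^{(0)}_{pt} = \sVec$ and no nontrivial Tannakian subcategories into $\bts$-factors of $s$-simple categories. For this I would pass to an ordinary category: since $\C^{(0)}$ has no Tannakian subcategories, one cannot de-equivariantize directly, but one can instead argue with the \emph{modularization-like} trick of adjoining an Ising or working inside $\C^{(0)} \bt \sVec$... Actually the cleanest route: use Corollary~\ref{lattice iso} in reverse. Consider the non-degenerate category $\widehat\C := \C^{(0)} \bt \sVec$ — no, rather, I would invoke the result directly from \cite{DGNO2} at the level of \emph{s-categories}: by an $\sVec$-analogue of Müger's theorem, a non-degenerate braided fusion category over $\sVec$ with no Tannakian subcategory and trivial pointed part decomposes as $\bts$ of categories each of which has no proper $\sVec$-subcategory, i.e. is $s$-simple. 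The induction is on $\FPdim(\C^{(0)})$: if $\C^{(0)}$ is $s$-simple we are done; otherwise take a minimal proper $s$-subcategory $\C_1 \subsetneq \C^{(0)}$, show it is non-degenerate over $\sVec$ (its symmetric center inside $\C^{(0)}$ is an $\sVec$-subcategory of $\C_1$, hence by minimality is $\sVec$ or $\C_1$; if it were $\C_1$ then $\C_1$ would be symmetric hence — being non-pointed or pointed — would contain a Tannakian subcategory by \cite{D2}, contradicting the hypothesis on $\C^{(0)}$), apply Proposition~\ref{E Muger thm} to split $\C^{(0)} \cong \C_1 \bts (\C_1)'$, and note $\C_1$ is $s$-simple by minimality; then recurse on $(\C_1)'$, which is non-degenerate over $\sVec$ (again Proposition~\ref{E Muger thm}), still has no Tannakian subcategory (a Tannakian subcategory of $(\C_1)' \subset \C^{(0)}$ would be one of $\C^{(0)}$), and still has trivial pointed part.

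For uniqueness in (1) and the subcategory description in (2), I would use Corollary~\ref{lattice iso}: the lattice of fusion subcategories of $\C^{(0)} = (\C_1 \bt \cdots \bt \C_m)\bt_{\tilde\E}\Vec$ (after also handling $\C_{pt}$, whose subcategory lattice is combinatorial and well understood) is isomorphic to the lattice of fusion subcategories of $\C_1 \bt \cdots \bt \C_m$ containing $\tilde\E$. Since each $\C_i/\sVec$-type factor is $s$-simple, the subcategories of the ordinary Deligne product $\C_1 \bt \cdots \bt \C_m$ containing $(\sVec)^{\bt m}$ that are themselves $\sVec$-containing products are exactly the sub-Deligne-products indexed by subsets of $\{1,\dots,m\}$, possibly tensored with a subcategory of $\C_{pt}$ — this is where one invokes that $s$-simple factors, being simple as $\sVec$-categories, have no intermediate $\sVec$-subcategories, combined with the fact that a subcategory $\D$ containing $\sVec$ decomposes compatibly (use \cite[Prop.~4.30]{DGNO1}-type reasoning, or directly: $\D' \cap \C_i$ is an $\sVec$-subcategory of $\C_i$, hence $\sVec$ or $\C_i$). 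Uniqueness of the $\C_i$ up to permutation then follows because the $s$-simple subcategories of $\C$ are precisely the $\C_i$: any $s$-simple $\D \subset \C$ is minimal among proper $s$-subcategories, and the minimal proper $s$-subcategories of a product of $s$-simples are exactly the factors.

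\textbf{Main obstacle.} The crux — and the step I expect to be most delicate — is establishing the $\sVec$-version of Müger's decomposition, i.e. that the \emph{minimal} proper $s$-subcategory $\C_1$ one peels off is non-degenerate over $\sVec$ (so that Proposition~\ref{E Muger thm} applies) rather than degenerate. One must rule out that $\C_1$'s symmetric center, computed inside $\C^{(0)}$, is larger than $\sVec$; this uses in an essential way both the hypothesis that $\C^{(0)}$ has no Tannakian subcategories and the classification of pointed slightly degenerate categories (\cite[Prop.~2.6(ii)]{ENO2}), since the failure mode is precisely a symmetric $s$-subcategory, which — lacking Tannakian content — would have to be $\sVec$ itself. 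The bookkeeping to make the induction on Frobenius–Perron dimension close cleanly (verifying at each stage that "no Tannakian subcategory" and "trivial pointed part beyond $\sVec$" are inherited by the centralizer $(\C_1)'$) is routine but needs care. Since the authors explicitly defer the hard analytic content to \cite{DGNO2}, I would likewise cite that for the existence half and concentrate the written proof on the lattice-theoretic consequences (uniqueness and part (2)) via Corollary~\ref{lattice iso}.
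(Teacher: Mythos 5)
Your existence argument is essentially the paper's: split off $\C_{pt}$ with Proposition~\ref{E Muger thm} (using that the only symmetric subcategories of $\C$ are $\Vec$ and $\sVec$, so $\C_{pt}$ and every s-subcategory you peel off is slightly degenerate), then induct on Frobenius--Perron dimension by removing a minimal s-subcategory, which is s-simple. The genuine gap is in uniqueness and part (2). After invoking Corollary~\ref{lattice iso} one must show that every fusion subcategory $\tilde{\D}\subset \C_1\bt\cdots\bt\C_m$ containing $(\sVec)^{\bt m}$ equals $(\sVec)^{\bt m}\vee(\C_{i_1}\bt\cdots\bt\C_{i_k})$. You assert this as ``a subcategory containing $\sVec$ decomposes compatibly'', justified only by the dichotomy that $\tilde{\D}'\cap\C_i$ (equivalently, the projection $\tilde{\D}_i$ of $\tilde{\D}$ to the $i$-th factor) is $\sVec$ or $\C_i$. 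That dichotomy is the easy half: it does not exclude ``diagonal'' subcategories whose projection to some factor is all of $\C_i$ while the factor $\be\bt\cdots\bt\C_i\bt\cdots\bt\be$ is not contained in $\tilde{\D}$ --- exactly the phenomenon that does occur for pointed factors (graphs of isomorphisms), which is why the $\D_{pt}$ term in part (2) cannot be split into subcategories of the factors. Likewise your claim that ``the minimal proper s-subcategories of a product of s-simples are exactly the factors'' is the statement to be proved, not a citable fact, and nothing in your sketch proves it.

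The paper closes precisely this gap with a dimension argument: for a simple object $X=X_1\bt\cdots\bt X_m\in\tilde{\D}$, the object $X\ot X^*$ contains $\be\bt\cdots\bt(X_i\ot X_i^*)\bt\cdots\bt\be$, so the fusion subcategory $\B_i\subset\C_i$ generated by the constituents of $X_i\ot X_i^*$ lies in $\tilde{\D}$. If $\B_i\neq\C_i$, then $\B_i\vee\sVec=\sVec$ by s-simplicity of $\C_i$, forcing every non-invertible simple $X_i\in\tilde{\D}_i$ to satisfy $\FPdim(X_i)=\sqrt{2}$ and hence to generate an Ising subcategory $\mathcal{I}\subset\C_i$; but $\mathcal{I}$ is non-degenerate while containing the transparent object $\delta$, a contradiction. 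Thus a full projection forces $\C_i\subset\tilde{\D}$, which yields the product form of every s-subcategory, hence part (2) and the uniqueness of the $\C_i$. Without this argument (or an equivalent replacement) your uniqueness/part-(2) step does not go through, even though the rest of your outline matches the paper.
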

\begin{proof}
Since $\C$ has no non-trivial Tannakian subcategories, we see 
that $\Vec$ and $\C'=\sVec$ are the only symmetric subcategories of $\C$.

For any fusion subcategory  $\D\subset \C$ the category $\D \cap \D'$ is
symmetric.  Therefore, $\D$ is  either non-degenerate
(if $\sVec\not\subset \D$) or slightly degenerate (if $\sVec\subset \D$).

In particular, $\C_{pt}$ is slightly degenerate. Therefore, $\C_{pt}'$
is slightly degenerate  and  $\C \cong \C_{pt}  \bts \C_{pt}'$
by   Proposition~\ref{E Muger thm}. So it suffices to prove the
Theorem in the case when $\C_{pt}=\sVec$. 

If $\C$ is s-simple then there is nothing to prove.  Otherwise,
let $\C_1$ be an s-simple subcategory of $\C$. Then
$\C \cong \C_1 \bts \C_1'$ by   Proposition~\ref{E Muger thm}. 
If $\C_1'$ is not s-simple choose its s-simple subcategory
$\C_2$ and apply Proposition~\ref{E Muger thm} again.
Continuing in this way we obtain decomposition \eqref{decomposition S}.

Let us prove the uniqueness assertion. Let $\D$ be 
an s-subcategory of $\C$. 
By Corollary~\ref{lattice iso} there is a fusion subcategory
$\tilde{\D} \subset  \C_1\bt \,\cdots\, \bt  \C_n$
containing  $(\sVec)^{\bt n}$ such that 
\[
\D \cong \tilde{\D} \bt_{\tilde{\E}} \Vec.
\]
Let $\tilde{\D} _i \subset \C_i$
be the fusion subcategory consisting of all objects $X_i\in \C_i$ such that 
there is an object  $X =X_1\bt \cdots \bt X_i \bt \cdots \bt X_n \in \tilde{\D}$. 
Since $\C_i$ is s-simple we have either $\tilde{\D_i} = \sVec$ or
$\tilde{\D} _i = \C_i$.  Let us analyze  the former case. 
Let   $\B_i \subset \C_i$ be a fusion subcategory generated by objects contained in $X_i \ot X_i^*$,
where $X_i$ is simple object in $\tilde{\D} _i$.
Since 
\begin{equation*}
X\ot X^*=(X_1\ot X_1^*)\boxtimes\cdots\boxtimes(X_n\ot X_n^*)\succ\be\boxtimes\cdots\boxtimes\be\boxtimes(X_i\otimes X_i^*)\boxtimes\be\boxtimes\cdots\boxtimes\be,
\end{equation*}
the category $\B_i$  is contained in $\tilde{\D}$. 
We claim that $\B_i =\C_i$. Indeed, otherwise $\B_i =\sVec$ since $\C_i$ is s-simple. Hence, $\FPdim(Y) =\sqrt{2}$
for every simple non-invertible object $Y\in \tilde{\D} _i$.   
Therefore, $Y\ot Y =\be \oplus \delta$ and so $Y$ generates 
an Ising subcategory $\mathcal{I} \subset \tilde{\D}_i$.
But this is impossible since on the one hand
$\mathcal{I}$ is non-degenerate  (see Section~\ref{Prelim-brcat})
and on the other hand $\tilde{\D} _i' =\sVec \subset \mathcal{I} \cap \mathcal{I}'$.
We conclude that either $\tilde{\D}_i = \sVec$  or $\C_i \subset \D$.
Therefore, 
\[
\tilde{\D} = (\sVec)^{\bt n} \vee (  \C_{i_1} \bt\, \cdots \, \bt \C_{i_k}), 
\]
where 
$\{ i_1,\dots, i_k \}$ is a subset of  $\{ 1,2,\dots, n\}$.
It follows that 
\[
\D \cong \C_{i_1} \bts\, \cdots \, \bts \C_{i_k}.
\]
In particular, if $\D$
is s-simple then $\D = \C_i$ for some $i$. This implies 
uniqueness of the tensor decomposition of $\C$.
\end{proof} 

\end{section}

\begin{section}{The Witt group of slightly degenerate braided fusion categories}
\label{Witt section}

\subsection{The Witt group of non-degenerate  braided fusion categories over $\E$}
\label{E-Witt}

\begin{definition} 
\label{eWitt eq}
Let $\C_1$ and $\C_2$ be
non-degenerate braided fusion categories over a symmetric fusion category $\E$
such that the corresponding braided tensor fumctors $\E \to \C_i,\,i=1,2,$ are fully faithful.
We will say that $\C_1$ and $\C_2$  are {\em Witt equivalent} if there exist  
fusion categories $\A_1$, $\A_2$ over $\E$ and  a braided equivalence over $\E$
\[
\C_1\bte \Z(\A_1,\, \E)\cong \C_2\bte \Z(\A_2,\, \E).
\] 
\end{definition}

It follows from Proposition~\ref{e transitivity} that  the Witt equivalence of non-degenerate braided fusion categories over $\E$
is indeed an equivalence relation. We will denote the Witt equivalence class of a non-degenerate braided fusion  $\E$-category 
$\C$ by $[\C]$. The set of Witt equivalence classes of slightly degenerate braided fusion  categories
will be denoted $\W(\E)$. Clearly, $\W(\E)$ is a commutative monoid with respect to the
multiplication  $\bte$. The unit of this monoid is $[\E]$.

\begin{lemma} 
The monoid $\W(\E)$ is a group.
\end{lemma}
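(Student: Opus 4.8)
The plan is to exhibit, for each non-degenerate braided fusion $\E$-category $\C$, an inverse to $[\C]$ in $\W(\E)$. The natural candidate is $[\C^{\rev}]$, exactly as in the case $\E = \Vec$ treated in \cite{DMNO}. So the whole lemma reduces to the single claim that $\C \bte \C^{\rev}$ is of the form $\Z(\A,\, \E)$ for some fusion category $\A$ over $\E$, i.e. that $[\C \bte \C^{\rev}] = [\E]$ in $\W(\E)$.

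This claim is precisely Corollary~\ref{e Muger}: there we established a braided tensor equivalence over $\E$
\[
\Z(\C,\, \E) \cong \C \bte \C^{\rev}.
\]
Since $\C$ is itself a fusion category over $\E$ (its $\E$-structure comes from $\E \to \C' \subset \C \to \Z(\C)$, and the hypothesis that $\E \to \C$ is fully faithful guarantees the composition $\E \to \Z(\C) \to \C$ is fully faithful, as required in the definition of $\W(\E)$), the category $\Z(\C,\, \E)$ is by construction $\Z(\A,\, \E)$ with $\A = \C$. Hence $\C \bte \C^{\rev} \cong \Z(\C,\, \E)$ represents the unit class $[\E]$, and therefore $[\C][\C^{\rev}] = [\E]$ in $\W(\E)$. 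This shows every element of the monoid $\W(\E)$ has an inverse, so $\W(\E)$ is a group.

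First I would record that $\W(\E)$ is a well-defined commutative monoid with unit $[\E]$, which is already stated just before the lemma and follows from associativity and symmetry of $\bte$ together with Proposition~\ref{e transitivity} (used to see that tensoring with classes of relative centers does not change a Witt class, so the relation is genuinely an equivalence relation compatible with $\bte$). Then I would invoke Corollary~\ref{e Muger} applied to $\A = \C$ to conclude $[\C \bte \C^{\rev}] = [\Z(\C,\,\E)] = [\E]$. I do not expect any real obstacle here: the only point requiring a moment's care is checking that $\C$ and $\C^{\rev}$ genuinely qualify as objects for the relative-center construction over $\E$ (fully faithfulness of $\E \to \Z(\C)$ after composing with the forgetful functor), but this is immediate from the standing hypothesis in Definition~\ref{eWitt eq} that $\E \to \C_i$ is fully faithful, since that embedding factors through $\C'$ and its image centralizes everything. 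Thus the proof is essentially a one-line deduction from Corollary~\ref{e Muger}.
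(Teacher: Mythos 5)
Your proof is correct and follows the same route as the paper: the paper's own proof of this lemma is exactly the observation that the inverse of $[\C]$ is $[\C^{\rev}]$, which follows immediately from Corollary~\ref{e Muger} identifying $\C \bte \C^{\rev}$ with $\Z(\C,\,\E)$. Your additional remarks on well-definedness and full faithfulness are fine but not needed beyond what the paper already records before the lemma.
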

\begin{proof} 
This immediately follows from Corollary~\ref{e Muger}.
\end{proof}

\begin{proposition}
Let $A$ be a connected \'etale algebra in a non-degenerate category $\C$ over $\E$ such that 
$A\cap \E=\be$. Then $[\C] =[\C_A^0]$ in $\W(\E)$. 
\end{proposition}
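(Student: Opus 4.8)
The goal is to show that passing from a non-degenerate braided $\E$-category $\C$ to its category of local modules $\C_A^0$ over a connected \'etale algebra $A$ with $A\cap\E=\be$ does not change the Witt class in $\W(\E)$. First I would recall that the condition $A\cap\E=\be$ is exactly what guarantees, by Corollary~\ref{ZCA}, that $\C_A^0$ is again non-degenerate over $\E$, so the statement $[\C_A^0]\in\W(\E)$ makes sense. Then the natural strategy is to produce an explicit fusion category $\A$ over $\E$ together with a braided equivalence over $\E$ of the form
\[
\C \bte \Z(\A,\,\E) \cong \C_A^0 \bte \E
\]
(the right-hand factor $\Z(\A_2,\,\E)$ in Definition~\ref{eWitt eq} being taken trivial, i.e.\ $\A_2=\Vec$, since $\Z(\Vec,\,\E)=\E'=\E$ as $\E$ is symmetric).

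The key step is to take $\A = \Rep_{\C^{\rev}}(A) = \C^{\rev}_A$, the category of $A$-modules in $\C^{\rev}$, viewing $A$ as a connected \'etale algebra in $\C^{\rev}$ via the embedding $\C^{\rev}=\E\bte\C^{\rev}\hookrightarrow\C\bte\C^{\rev}\cong\Z(\C,\,\E)$ from the discussion preceding Corollary~\ref{ZCA}. This $\A$ is a (multi-)fusion category over $\E$. Now I would invoke Corollary~\ref{ZCA} itself in the form
\[
\Z(\C_A,\,\E)\cong \C\bte(\C_A^0)^{\rev},
\]
together with Proposition~\ref{ZAT0} (applied with the algebra $A$ inside $\Z(\C^{\rev},\,\E)\cong\Z(\C,\,\E)$), which gives $\Z(\C_A,\,\E)\cong\Z(\C,\,\E)_A^0$, and the second equivalence in \eqref{Vitia's gift to humanity}, namely $\Z(\C,\,\E)_A^0\cong\C\bte(\C_A^0)^{\rev}$. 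Chaining these, one reads off that $\Z(\A,\,\E)=\Z(\C^{\rev}_A,\,\E)$ is braided equivalent over $\E$ to $\C^{\rev}\bte\C_A^0$. Tensoring both sides with $\C$ over $\E$ and using non-degeneracy (Corollary~\ref{e Muger}, which gives $\C\bte\C^{\rev}\cong\Z(\C,\,\E)$, itself a center) yields
\[
\C\bte\Z(\A,\,\E)\cong \C\bte\C^{\rev}\bte\C_A^0 \cong \Z(\C,\,\E)\bte\C_A^0,
\]
and since $\Z(\C,\,\E)$ is $\Z$ of a fusion category over $\E$, this exhibits the Witt equivalence $[\C]=[\C_A^0]$ directly from Definition~\ref{eWitt eq}.

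The main obstacle I anticipate is bookkeeping the $\E$-structures: one must check that every equivalence used (the identification $\Z(\C^{\rev}_A,\,\E)\cong\Z(\C^{\rev},\,\E)_A^0$ from Proposition~\ref{ZAT0}, and the splitting $\Z(\C,\,\E)_A^0\cong\C\bte(\C_A^0)^{\rev}$) is genuinely an equivalence \emph{over} $\E$, i.e.\ compatible with the structural functors from $\E$, and that the algebra $A$ regarded in $\C^{\rev}$ still satisfies $A\cap\E=\be$ so that the multi-fusion category $\Rep_{\C^{\rev}}(A)$ has the right properties. All of these are either explicitly recorded in the cited statements (Propositions~\ref{ZAT0}, \ref{e transitivity}, Corollaries~\ref{e Muger}, \ref{ZCA}) or follow because the algebra $A$ lies in the symmetric center and hence all the relevant module categories are automatically local/braided; so the proof should reduce to assembling these facts, with the only real care needed being that $\C_A^0$ indeed inherits non-degeneracy over $\E$, which is precisely Corollary~\ref{ZCA}.
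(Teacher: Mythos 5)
Your proof is correct and takes essentially the same route as the paper, whose own proof is just to invoke Corollary~\ref{ZCA} (itself a combination of Proposition~\ref{ZAT0} and \eqref{Vitia's gift to humanity}): you apply that equivalence with $\C^{\rev}$ in place of $\C$ and then tensor with $\C$, using Corollary~\ref{e Muger} to recognize $\C\bte\C^{\rev}\cong\Z(\C,\E)$ and exhibit the Witt equivalence exactly in the form of Definition~\ref{eWitt eq} with $\A_1=\C^{\rev}_A$, $\A_2=\C$. The only blemish is the unused parenthetical claim that one could take $\A_2=\Vec$ because ``$\Z(\Vec,\E)=\E$'' --- in fact $\Z(\Vec)=\Vec$ and the trivial choice would be $\A_2=\E$ --- but since your final equivalence uses $\A_2=\C$, nothing in the argument is affected.
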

\begin{proof}
The result  follows from Corollary~\ref{ZCA}.
\end{proof}

\begin{definition}
Let $\E$ be a symmetric fusion category and let $\C$ be a braided fusion category
over $\E$. We say that $\C$ is {\em completely $\E$-anisotropic} if every connected \'etale algebra
in $\C$ belongs to $\E$.
\end{definition}

For  $\E=\Vec,\, \sVec$ the above notion coincides with the usual complete anisotropy, cf.\
Section~\ref{Prelim-etale}.

\begin{theorem}
\label{5.12}
Each equivalence class in $\W(\E)$ contains a completely $\E$-anisotropic  category
category that is unique up to a braided equivalence. 
\end{theorem}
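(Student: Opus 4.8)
The plan is to mimic the proof of the corresponding statement in \cite{DMNO} (the $\E = \Vec$ case, quoted in Section~\ref{Prelim-Witt}), adapting each step to the relative setting over $\E$. First I would establish \emph{existence}: starting from any non-degenerate braided fusion $\E$-category $\C$, I look for a connected \'etale algebra $A \in \C$ with $A \cap \E = \be$ and $A \neq \be$. If no such algebra exists, $\C$ is already completely $\E$-anisotropic and we are done. Otherwise, I pass to $\C_A^0$, which by Corollary~\ref{ZCA} is again non-degenerate over $\E$ and satisfies $[\C] = [\C_A^0]$ in $\W(\E)$ by the Proposition just above. Since $\FPdim(\C_A^0) = \FPdim(\C)/\FPdim(A)^2 < \FPdim(\C)$ by \eqref{FPdimCA0} (and $\FPdim(A) > \FPdim(\be)$ because $A \notin \E$ would force $\FPdim(A) \geq \FPdim(\E)\cdot 1$; more simply $A \supsetneq \be$ with $A \cap \E = \be$ gives $\FPdim(A) > 1$), this process strictly decreases the Frobenius-Perron dimension and hence terminates, producing a completely $\E$-anisotropic representative.

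For \emph{uniqueness}, suppose $\C_1$ and $\C_2$ are both completely $\E$-anisotropic and Witt equivalent over $\E$, so $\C_1 \bte \Z(\A_1,\E) \cong \C_2 \bte \Z(\A_2,\E)$ for suitable fusion categories $\A_i$ over $\E$. Using Corollary~\ref{e Muger}, $\Z(\A_i,\E) \cong \B_i \bte \B_i^{\rev}$ is a ``trivial'' class, so after tensoring both sides with $\C_2^{\rev} \bte \C_1^{\rev}$ and using Proposition~\ref{e transitivity} to combine centers, the claim reduces to: if $\C$ is completely $\E$-anisotropic and $[\C] = [\E]$ in $\W(\E)$, then $\C \cong \E$. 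The key input here is an $\E$-relative analogue of the statement \cite[Lemma 5.12]{DMNO} / \cite[Theorem 4.10]{DMNO}: if $\C \bte \Z(\A,\E) \cong \Z(\B,\E)$, then there is a connected \'etale algebra in $\C$ whose category of local modules realizes the anisotropic part. More concretely, I would argue that a Witt-trivial $\C$ over $\E$ embeds as an $\E$-subcategory of some $\Z(\A,\E)$ in such a way that $\C = \Z(\A,\E)_L^0$ for the Lagrangian-type algebra $L \in \Z(\A,\E)$ coming from $\A$; then $\C \cap L$ (which is \'etale by Corollary~\ref{etsub}) together with complete $\E$-anisotropy of $\C$ forces $\C \cap L = \E$, and tracking Frobenius--Perron dimensions via \eqref{prodFP} and \eqref{FPdimCA0} forces $\C \cong \E$.

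The main obstacle I anticipate is the uniqueness half, specifically transporting the ``common refinement'' argument of \cite{DMNO} to the relative setting. In the absolute case one uses that $\C_1 \bt \Z(\A_1) \cong \C_2 \bt \Z(\A_2)$ implies, after realizing $\Z(\A_i)$ via Lagrangian algebras and applying Theorem~\ref{etale in CD}-type classification of \'etale algebras in tensor products, that $\C_1$ and $\C_2$ have a common ``\'etale correspondence'' which, combined with anisotropy, collapses. Here I need the $\E$-relative versions: Theorem~\ref{etale in CD} classifies connected \'etale algebras in $\C \bt \D$, and via the de-equivariantization dictionary of Remark~\ref{tensor over E = equiv} (namely $\A \bte \B \cong (\A \bt \B)\bt_\F \Vec$) this should descend to a classification of connected \'etale algebras in $\C \bte \D$ that lie over $\E$; the careful bookkeeping is to check that algebras with $A \cap \E = \be$ on both tensor factors correspond exactly to braided equivalences $\C_1^{\rev} \simeq \D_1$ of $\E$-subcategories, and that the Frobenius--Perron dimension count \eqref{prodFP}, now with the correction factor $\FPdim(\E)$ as in Proposition~\ref{E Muger thm}, still forces the anisotropic representatives to agree. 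Once that relative classification is in hand, the rest is the same dimension-counting bookkeeping as in \cite{DMNO}.
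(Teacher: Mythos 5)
Your proposal is essentially the paper's own proof: the paper disposes of Theorem~\ref{5.12} with the single line ``completely parallel to [DMNO, Theorem 5.13]'', and your outline is exactly that parallel argument, with the correct relative ingredients identified (Frobenius--Perron induction using $[\C]=[\C^0_A]$ and Corollary~\ref{ZCA} for existence; for uniqueness, the Lagrangian-type algebra coming from Proposition~\ref{whenCisinE} together with the relative analogue of Theorem~\ref{etale in CD}, or equivalently injectivity of central functors from completely anisotropic categories as in [DMNO, Lemma 5.12], descended through the de-equivariantization dictionary of Remark~\ref{tensor over E = equiv}).

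Two cautions. First, the intermediate ``reduction'' in your uniqueness step is not valid as literally stated: from $[\C_1]=[\C_2]$ you get $[\C_1\bte\C_2^{\rev}]=[\E]$, but $\C_1\bte\C_2^{\rev}$ is in general \emph{not} completely $\E$-anisotropic (e.g.\ $\C\bte\C^{\rev}\cong\Z(\C,\E)$), so the statement ``completely $\E$-anisotropic and Witt-trivial implies $\cong\E$'' cannot be applied to it; one must instead argue directly on $\C_1\bte\C_2^{\rev}\cong\Z(\A,\E)$, showing that the canonical connected \'etale algebra meets each factor only in $\E$ (by anisotropy of $\C_1$, $\C_2$) and then invoking the classification/dimension count to produce a braided equivalence $\C_1\cong\C_2$ over $\E$ --- which is precisely what your third paragraph describes, so the correct route is present in your writeup. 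Second, in the existence step the implication ``no nontrivial connected \'etale algebra $A$ with $A\cap\E=\be$ implies $\C$ is completely $\E$-anisotropic'' is immediate only when $\E$ has no nontrivial connected \'etale algebras (so for $\E=\Vec$ or $\sVec$, the cases the paper actually uses); for a general symmetric $\E$ containing Tannakian subcategories an \'etale algebra could meet $\E$ nontrivially without containing a complement with trivial intersection, and this point deserves an argument (the paper's one-line proof glosses over it just as you do).
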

\begin{proof}
This is completely parallel to \cite[Theorem 5.13]{DMNO}. 
\end{proof}

\begin{proposition} 
\label{whenCisinE}
Let $\C$ be a non-degenerate braided fusion category over $\E$.
Then $\C \in [\E]$ if and only if there exist a fusion
category $\A$ over $\E$  
and an equivalence $\C \cong \Z(\A,\, \E)$ of braided fusion categories over $\E$.
\end{proposition}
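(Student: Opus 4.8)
The plan is to mimic the argument for the absolute case $\E = \Vec$: trivial Witt class means being a center. One direction is immediate. If $\C \cong \Z(\A, \E)$ for some fusion category $\A$ over $\E$, then by Corollary~\ref{e Muger} applied to $\A$ viewed appropriately — or more directly, since $\Z(\A,\E)$ is by construction the centralizer of $\E$ in $\Z(\A)$, and $\Z(\A,\E) \cong \Z(\A,\E) \bte (\text{trivial})$ — we need to produce fusion categories $\A_1, \A_2$ over $\E$ and a braided equivalence over $\E$ between $\C \bte \Z(\A_1,\E)$ and $\E \bte \Z(\A_2,\E)$. Taking $\A_1 = \E$ (so $\Z(\E, \E) = \E$, since the centralizer of $\E$ in $\Z(\E)$ is $\E$ itself when $\E$ is symmetric) and $\A_2 = \A$, the required equivalence $\C \bte \E \cong \E \bte \Z(\A, \E)$ is just the hypothesis together with $\C \bte \E \cong \C$. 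Hence $\C \in [\E]$.

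For the converse, suppose $\C \in [\E]$, i.e.\ there are fusion categories $\A_1, \A_2$ over $\E$ with a braided equivalence over $\E$
\[
\C \bte \Z(\A_1,\, \E) \cong \Z(\A_2,\, \E).
\]
By Proposition~\ref{e transitivity}, $\Z(\A_1,\E) \bte \Z(\A_1^{\rev},\E) \cong \Z(\A_1 \bte \A_1^{\rev}, \E)$, and one expects (as in the absolute case) that $\Z(\A_1,\E) \bte \Z(\A_1,\E)^{\rev}$ is itself a relative center — indeed by Corollary~\ref{e Muger} it is $\Z(\Z(\A_1,\E), \E)$, but more to the point one wants $\Z(\A_1,\E) \bte \Z(\A_1,\E)^{\rev} \cong \Z(\B,\E)$ for a suitable $\B$. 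Tensoring the displayed equivalence with $\Z(\A_1,\E)^{\rev}$ and using Proposition~\ref{e transitivity} to absorb the resulting product of relative centers into a single relative center $\Z(\A_3, \E)$, we get $\C \bte \Z(\A_3,\E) \cong \Z(\A_4, \E)$ for suitable $\A_3, \A_4$ over $\E$ — that is, we may assume from the start that $\A_1 = \E$, so $\C \cong \Z(\A_2, \E)$ directly.

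The step I expect to be the genuine obstacle is the bookkeeping showing $\Z(\A_1,\E) \bte \Z(\A_1,\E)^{\rev}$ is a relative center over $\E$ of the expected form; this is the $\E$-analogue of the fact that $\Z(\A) \bt \Z(\A)^{\rev} \cong \Z(\A \bt \A^{\rev})$ used in \cite{DMNO}, and it should follow by combining Proposition~\ref{e transitivity} with the identification $\Z(\A, \E)^{\rev} \cong \Z(\A^{\rev}, \E)$ (which itself needs a short check, since reversing the braiding of the center corresponds to passing to the opposite fusion category). Once that identification and the cancellation it permits are in place, the argument closes: $\C \in [\E]$ forces $\C$ to be equivalent over $\E$ to some $\Z(\A, \E)$. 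Throughout one must be careful that all equivalences are \emph{over $\E$}, i.e.\ compatible with the structure functors $\E \to \C'$ and $\E \to \Z(\A)$, but this compatibility is built into Proposition~\ref{e transitivity} and Corollary~\ref{e Muger} and so is preserved at each step.
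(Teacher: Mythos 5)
The easy direction of your argument is fine, but the converse has a genuine gap at the step ``that is, we may assume from the start that $\A_1 = \E$.'' Tensoring $\C \bte \Z(\A_1,\E) \cong \Z(\A_2,\E)$ with $\Z(\A_1,\E)^{\rev}$ and invoking Proposition~\ref{e transitivity} (or Corollary~\ref{e Muger}) only replaces the factor $\Z(\A_1,\E)$ by another relative center $\Z(\A_3,\E)$: you end up with $\C \bte \Z(\A_3,\E) \cong \Z(\A_4,\E)$, which is an equivalence of exactly the same shape you started from, since $\Z(\A_1,\E)$ was already a single relative center. Nothing has been cancelled. Cancellation of such a factor is valid only at the level of Witt classes, and using it here is circular --- the whole point of the proposition is to upgrade the Witt-class statement $[\C]=[\E]$ to an actual equivalence $\C \cong \Z(\A,\E)$. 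A formal manipulation with $\bte$ and reverses cannot do this, because $\D \bte \D^{\rev}$ is never equivalent to $\E$ for nontrivial $\D$.

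The missing idea is the \'etale-algebra/local-module step, exactly as in \cite[Proposition 5.8]{DMNO} and as the paper does. Let $A_1 \in \Z(\A_1,\E)$ be the connected \'etale algebra corresponding to the (surjective, central) forgetful functor $\Z(\A_1,\E) \to \A_1$, and view $A_1$ inside $\Z(\A_2,\E)$ via the given equivalence $\C \bte \Z(\A_1,\E) \cong \Z(\A_2,\E)$. Passing to local modules kills precisely the unwanted factor: $(\C \bte \Z(\A_1,\E))_{A_1}^{0} \cong \C \bte \Z(\A_1,\E)_{A_1}^{0} \cong \C \bte \E \cong \C$ (a Frobenius--Perron dimension count shows $\Z(\A_1,\E)_{A_1}^{0}\cong\E$), while Proposition~\ref{ZAT0} identifies the other side as a relative center: $\Z(\A_2,\E)_{A_1}^{0} \cong \Z(\Rep_{\A_2}(A_1),\E)$. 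Hence $\C \cong \Z(\A,\E)$ with $\A = \Rep_{\A_2}(A_1)$, a fusion category over $\E$. Your proposal never brings in \'etale algebras at all, and without them the reduction you attempt does not close.
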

\begin{proof}
This is completely parallel to \cite[Proposition 5.8]{DMNO}. 
By definition, $\C \in [\E]$ if and only if 
\[
\C \bte \Z(\A_1,\, \E)  \cong \Z(\A_2,\, \E) .
\] 
Let $A_1\in \Z(\A_1,\, \E)$ be the connected \'etale algebra 
corresponding to the forgetful central functor $\Z(\A_1,\, \E)\to \A_1$.
We have
\[
\C \cong  \Z(\A_2,\, \E)_{A_1}^0 \cong \Z( \Rep_{\A_2}(A_1),\, \E),
\]
where the last equivalence is by Proposition~\ref{ZAT0}.
\end{proof}

We have $\W(\Vec)=\W$, the Witt group of non-degenerate braided fusion categories
introduced in \cite{DMNO}, see Section~\ref{Prelim-Witt}. 

Let us denote $s\W:= \W(\sVec)$.

\begin{definition}
The group $s\W$ is called the  {\em Witt group of slightly degenerate braided fusion categories}. 
\end{definition}
 
The group $s\W$ is the main object of our  analysis in the remainder of this paper. 

\begin{remark}
The base change along $\E\to\F$ defines a group homomorphism $\W(\E)\to \W(\F)$. 
Since by  Deligne's theorem \cite{D2} any symmetric fusion category $\E$ has a surjective braided 
tensor functor either into $\Vec$ or to $\sVec$ (i.e., $\E$ is either Tannakian or super-Tannakian)
it follows that  each $\W(\E)$ has a canonical homomorphism either to  $\W$ or to $s\W$.
\end{remark}

\subsection{Relations in $\sW$}

Let $\C$ be a completely anisotropic slightly degenerate braided fusion category and let 
\begin{equation}
\label{decomposition S'}
\C \cong  \C_{pt} \bts \C_1 \bts \cdots \bts \C_m, 
\end{equation}
be its decomposition into tensor product of s-simple subcategories from Theorem~\ref{decomposition theorem}.
Then categories $\C_{pt},\, \C_1,\dots,\C_m$  are completely anisotropic. 

Next Theorem extends  \cite[Theorem 5.19]{DMNO}
to slightly degenerate categories. It describes all central functors
on tensor products  of completely anisotropic s-simple
braided fusion categories (and, hence, all connected \'etale algebras in such products).

\begin{theorem}
\label{cenfunslideg}
Let $\C_1,\dots, \C_m$ be completely anisotropic s-simple
braided fusion categories.
Let 
\[
F:  \C_1 \bts\, \cdots \, \bts \C_m \to \A
\]
be a surjective central tensor functor.
\begin{enumerate}
\item[(1)] There is a subset $J \subset \{1,\dots,m\}$  such that
\[
\A \cong \underset{j\in J}{\bts}\, \C_j.
\]
\item[(2)] There is a surjective map $f:\{1,\dots,m\}\to J$ such that $|f^{-1}(j)| \leq 2$ for all $j\in J$ 
and 
\[
F=\underset{j\in J}{\bts} F_j, \quad \mbox{where} \quad
F_j :  \underset{i \in f^{-1}(j)}{\bts}\, \C_i \to \C_j \quad
\mbox{ is the restriction of } F.
\]
\item[(3)]
If $f^{-1}(j) =\{i\}$ then $F_j: \C_i \xrightarrow{\sim} \C_j$ is tensor  equivalence.  If $f^{-1}(j) =\{i,\, i'\}$  then 
there is a braided tensor equivalence $\C_{i'}\cong \C_i^\rev$ and $F_j$
is the forgetful tensor  functor
\begin{equation}
\label{Fj}
\C_i \bts \C_{i'} \cong  \C_j \bts \C_{j}^\rev \cong \Z(\C_j,\, \sVec) \xrightarrow{\text{Forget}} \C_{j} \cong \C_i. 
\end{equation}
\end{enumerate}
\end{theorem}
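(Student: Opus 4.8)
The plan is to mimic the strategy of \cite[Theorem 5.19]{DMNO}, using the tensor product classification of \'etale algebras (Theorem~\ref{etale in CD}) together with the rigidity of the decomposition from Theorem~\ref{decomposition theorem}. The first step is to pass from the central functor $F$ to the corresponding connected \'etale algebra $A = I(\be) \in \C_1 \bts \cdots \bts \C_m$, where $I$ is the right adjoint of $F$; since $\A \cong (\C_1 \bts \cdots \bts \C_m)_A$ and $F$ is the free module functor, classifying $F$ is the same as classifying $A$. Because each $\C_i$ is completely anisotropic and the ambient category is slightly degenerate, $A$ contains no nontrivial Tannakian subalgebra, so $A \cap \sVec$ is either $\be$ or $\sVec$; in either case we may reduce (by replacing the category with a de-equivariantization if necessary, or by tracking the $\sVec$ factor separately via Corollary~\ref{lattice iso}) to understanding how $A$ sits inside the product.

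Next I would run an induction on $m$, peeling off one factor at a time. Applying Theorem~\ref{etale in CD} to the splitting $(\C_1 \bts \cdots \bts \C_{m-1}) \bts \C_m$ writes $A$ as $A(A_1, A_2, \D_1, \D_2, \phi)$ where $A_1$ is a connected \'etale algebra in $\C_1 \bts \cdots \bts \C_{m-1}$, $A_2$ is one in $\C_m$, $\D_1 \subset (\C_1 \bts \cdots \bts \C_{m-1})^0_{A_1}$, $\D_2 \subset (\C_m)^0_{A_2}$, and $\phi: \D_1^\rev \simeq \D_2$ is a braided equivalence. Complete anisotropy of $\C_m$ forces $A_2 = \be$, hence $\D_2 \subset \C_m$; by s-simplicity $\D_2$ is either $\sVec$ or $\C_m$. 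If $\D_2 = \sVec$ then $\phi$ forces $\D_1 = \sVec$ as well and $\C_m$ ``survives'' into $\A$ untouched — this is the case $m \in J$ with $f^{-1}(m) = \{m\}$ and $F_m$ an equivalence (one must check $\C_m$ really maps isomorphically, using that the other factor of $A$ lives entirely in $\C_1 \bts \cdots \bts \C_{m-1}$). If $\D_2 = \C_m$, then $\D_1 \simeq \C_m^\rev$ must be an s-simple s-subcategory of $\C_1 \bts \cdots \bts \C_{m-1}$, and by the uniqueness part (2) of Theorem~\ref{decomposition theorem} it equals some $\C_i$; then $\phi$ identifies $\C_{i}$ with $\C_m^\rev$ and the corresponding block of $F$ is exactly the forgetful functor $\C_i \bts \C_m \cong \Z(\C_m,\sVec) \to \C_m$ as in \eqref{Fj}. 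Complete anisotropy of $\C_i$ again forces $A_1$ to factor as $\be$ on the $\C_i$-slot, so the inductive hypothesis applies to $A_1$ on the remaining factors; iterating produces the partition $f$ with fibers of size $\leq 2$. The $\C_{pt}$ factor, if present in a fiber alongside a single other factor, is handled identically since a pointed s-simple piece would have to be matched by its reverse, but complete anisotropy rules out $A_1$ having support there unless that factor, too, is carried through by an equivalence.

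The main obstacle I anticipate is the bookkeeping in the inductive step: after applying Theorem~\ref{etale in CD} one knows $\D_1$ abstractly as an s-subcategory of $\C_1 \bts \cdots \bts \C_{m-1}$, but to invoke the classification of s-subcategories (Theorem~\ref{decomposition theorem}(2)) one must be careful that $\D_1$ genuinely contains $\sVec$ and is of the stated product form, and then that the residual algebra $A_1$ is ``trivial'' precisely on the slots used up by $\D_1$ so that the induction closes cleanly. Controlling the $\sVec$-degeneracy throughout — whether $A$ absorbs a copy of $\delta$ and how this interacts with which $\delta_j$'s appear — is the delicate point, and is exactly where Proposition~\ref{action identical} and Corollary~\ref{lattice iso} do the work of reducing everything to the honestly non-degenerate setting where \cite[Theorem 5.19]{DMNO} can be quoted or re-run verbatim. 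A final small verification is that the decomposition $F = \bts_{j \in J} F_j$ respects the braidings and $\sVec$-structures, which follows because $\Z(\C_j,\sVec) \cong \C_j \bts_{\sVec} \C_j^\rev$ by Corollary~\ref{e Muger} and the forgetful functor there is the canonical one.
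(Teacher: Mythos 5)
Your route—converting $F$ into the connected \'etale algebra $A=I(\be)$ and then inducting on $m$ by peeling off one factor via Theorem~\ref{etale in CD}—is genuinely different from the paper's proof, which stays entirely on the functor side: there one notes that each restriction $F|_{\C_i}$ is injective because central functors out of completely anisotropic categories are injective (\cite{DMNO}, Lemma 5.12), so the images $\A_i\cong \C_i$ generate $\A$; the set $J$ is built by an induction on the factors using only a Frobenius--Perron dimension count; and then $F$ is factored as $\C_1\bts\cdots\bts\C_m\xrightarrow{F'}\Z(\A,\sVec)\cong \bts_{j\in J}(\C_j\bts\C_j^\rev)\to\A$, with $f$ read off from where the injective braided functor $F'$ sends each $\C_i$, using the uniqueness part of Theorem~\ref{decomposition theorem}. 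In particular the paper never needs to classify the algebra $A$ itself, which is where your difficulties concentrate.

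As written your argument has two concrete gaps. First, Theorem~\ref{etale in CD} is proved for the Deligne product $\bt$, not for $\bts$; you must actually perform the reduction (e.g.\ via Lemma~\ref{et0}, writing $\M\bts\C_m=(\M\bt\C_m)^0_B$ with $B$ the regular algebra of the diagonal Tannakian subcategory, so that connected \'etale algebras in $\M\bts\C_m$ are \'etale algebras over $B$ in $\M\bt\C_m$), and it is precisely the constraint $B\subset A$ that forces $\delta\in\D_2$. Without that step, s-simplicity says nothing about $\D_2$: it only restricts subcategories containing $\sVec$, and an s-simple category can have plenty of other subcategories (e.g.\ for $\C_m\simeq\sVec\bt\N$ with $\N$ simple non-degenerate, $\D_2=\N$ is a subcategory not containing $\delta$). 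Second, in the case $\D_2=\C_m$ you apply Theorem~\ref{decomposition theorem}(2) to $\D_1$, which is a subcategory of $\M^0_{A_1}$, not of $\M$, and your justification that ``complete anisotropy of $\C_i$ forces $A_1$ to be $\be$ on the $\C_i$-slot'' is not an argument: complete anisotropy only gives $A_1\cap\C_i=\be$, which does not prevent the simple summands of $A_1$ from having nontrivial components in the $\C_i$-slot (the canonical algebra in $\C_i\bts\C_i^\rev$ meets both factors trivially yet involves every simple object of $\C_i$). To locate $\D_1$ among the original factors you must first classify $A_1$ by the inductive hypothesis, then compute $\M^0_{A_1}$ as the product of the factors not paired by $A_1$ (using $(\C_i\bts\C_i^\rev)^0_{I(\be)}\cong\sVec$, a relative analogue not stated in the paper), and only then invoke the uniqueness theorem; your write-up runs these steps in the wrong order. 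Two smaller slips: the only connected \'etale algebra in $\sVec$ is $\be$, so the alternative ``$A\cap\sVec=\sVec$'' does not occur, and no $\C_{pt}$ factor appears in this theorem. With these points repaired your inductive algebra-side proof should go through, but it ends up longer than the paper's direct argument.
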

\begin{proof}
Let $\A_j\subset \A$
denote the image of $\C_i$ in $\A$, $j=1,\dots,m$. Since $\C_j$
is completely anisotropic, we have $\A_j \cong \C_j$. 
Clearly, $\A =\vee_{j=1}^m\, \A_j$.

The set $J$ is formed
by induction as follows. Set $J_1=\{1\}$.  Suppose for some $k<m$ a subset $J_k$
is chosen in such way that  
\[
\vee_{j=1}^k\, \A_j \cong \underset{j\in J_k}{\bts}\, \C_j.
\]  
Since $\A_{k+1}\cong \C_{k+1}$ is s-simple we have  either $\A_{k+1} \subset \vee_{j=1}^k\, \A_i$ 
or $\A_{k+1} \cap \left( \vee_{j=1}^k\, \A_i\right) 
=\sVec$. In the former case set $J_{k+1} =J_k$. Clearly, 
we  have 
\[
\vee_{j=1}^{k+1}\, \A_j \cong \underset{j\in J_{k+1}}{\bts}\, \C_j.
\]
In the latter
case  $\vee_{j=1}^{k+1}\, \A_j$ is generated by two  fusion subcategories,
$\vee_{j=1}^{k}\, \A_j$ and $\A_{k+1}$, whose intersection is $\sVec$.
The composition of $F$ with the tensor product of $\C$
gives rise to a surjective tensor functor
\begin{equation}
\label{Vitia's step}
\left(  \underset{j\in J_k}{\bts}\, \C_j   \right) \bts \C_{k+1} \to \vee_{j=1}^{k+1}\, \A_j.
\end{equation}
It follows from \cite[Lemma 3.38] {DGNO1} that both sides of \eqref{Vitia's step}
have equal Frobenius-Perron dimension. Therefore, \eqref{Vitia's step} 
is an equivalence and $J:= J_n$ is the required subset. This proves  part (1).

The functor $F$ factors as
\[
 \C_1 \bts\, \cdots \, \bts \C_m \xrightarrow{F'} \Z(\A) \cong \underset{j\in J}{\bts}\, (\C_j \bts \C_j^\rev)
 \to  \A \cong \underset{j\in J}{\bts}\, \C_j.
\]
Since $\C_1 \bts\, \cdots \, \bts \C_n$ is slightly degenerate, 
the braided tensor functor $F'$ is  injective.  Define an embedding
$\{1,\dots,m\} \hookrightarrow J \sqcup J$ by $f(i)=j \in J\sqcup \emptyset$
if $F'(\C_i)= \C_j$  and $f(i)=j \in \emptyset \sqcup J$
if $F'(\C_i)= \C_j^\rev$. This embedding is well defined by  
Theorem~\ref{decomposition theorem}.  Let
\[
f: \{1,\dots,m\} \hookrightarrow J \sqcup J \to J
\]
to be the composition of the above embedding and
the diagonal map. Since $F$ is surjective, so is $f$.
Since $\C_1 \bts\, \cdots \, \bts \C_m$ is slightly degenerate
the braided tensor functor $F'$ is injective.  Therefore,
$f^{-1}(j)$ contains at most $2$ elements for every $j\in J$ and
$F$ decomposes as a product  of central tensor functors 
\[
F_j :  \underset{i \in f^{-1}(j)}{\bts}\, \C_i \to \C_j.
\]
This proves part (2).  Part (3) is an immediate consequence of the above 
definition of $f$.
\end{proof}

%
%

\begin{corollary}
\label{relations in sW}
Let $\C_1,\dots,\C_m$ be completely anisotropic s-simple braided categories.
Suppose that there exists a fusion category $\A$ over $\sVec$ such that
\[
\C_1\bts \,\cdots\, \bts  \C_m \cong \Z(\A,\, \sVec).
\]
Then there exists a fixed point free  involution  $a$ of the set $\{1,\dots, m\}$ such that  $\C_i \cong \C_{a(i)}^{\rev}$
for all $i= 1,\dots, m$. 
\end{corollary}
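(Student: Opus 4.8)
The plan is to apply Theorem~\ref{cenfunslideg} to the forgetful functor and then pin down the combinatorics with a Frobenius--Perron dimension count. First I would observe that the forgetful functor
\[
F\colon \Z(\A,\,\sVec)\to \A
\]
is a surjective central tensor functor: it is central because it factors as $\Z(\A,\,\sVec)\hookrightarrow\Z(\A)\to\A$, and it is surjective by Theorem~\ref{insu} (this is exactly the surjectivity of $F_\E$ recorded just before Corollary~\ref{e Muger}). Transporting $F$ along the given braided equivalence $\C_1\bts\cdots\bts\C_m\cong\Z(\A,\,\sVec)$ and applying Theorem~\ref{cenfunslideg}, I obtain a subset $J\subset\{1,\dots,m\}$, a surjection $f\colon\{1,\dots,m\}\to J$ with $|f^{-1}(j)|\le 2$ for all $j$, a braided equivalence $\A\cong\underset{j\in J}{\bts}\,\C_j$, and a decomposition $F=\underset{j\in J}{\bts}\,F_j$ with the $F_j$ as in parts~(2)--(3) of that theorem.

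The crucial step is to rule out one-element fibres of $f$. From the construction of $\bts$ in Section~\ref{Prelim-over E}, $\C\bts\D=(\C\bt\D)_A$ with $\FPdim(A)=2$, so \eqref{FPdimCA} gives $\FPdim(\C\bts\D)=\tfrac12\FPdim(\C)\FPdim(\D)$; iterating,
\[
\FPdim(\C_1\bts\cdots\bts\C_m)=2^{1-m}\prod_{i=1}^m\FPdim(\C_i),\qquad \FPdim(\A)=2^{1-|J|}\prod_{j\in J}\FPdim(\C_j),
\]
while applying \eqref{prodFP} and \eqref{dimZ} to the inclusion $\sVec\subset\Z(\A)$ (whose symmetric center is trivial) gives $\FPdim(\Z(\A,\,\sVec))=\tfrac12\FPdim(\A)^2$. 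Put $J_1=\{\,j\in J:|f^{-1}(j)|=1\,\}$. Grouping the factors of $\prod_i\FPdim(\C_i)$ according to the fibres of $f$ and using $\FPdim(\C_j^\rev)=\FPdim(\C_j)$, each two-element fibre contributes $\FPdim(\C_j)^2$ to both sides of $\FPdim(\C_1\bts\cdots\bts\C_m)=\FPdim(\Z(\A,\,\sVec))$ and cancels; since $m=|J_1|+2|J\setminus J_1|$ and $|J|=|J_1|+|J\setminus J_1|$ we have $2|J|-m=|J_1|$, and the identity collapses to
\[
\prod_{j\in J_1}\FPdim(\C_j)=2^{|J_1|}.
\]
Because each $\C_j$ is $s$-simple, hence non-pointed, $\FPdim(\C_j)>\FPdim((\C_j)_{pt})\ge\FPdim(\sVec)=2$, so this identity forces $J_1=\emptyset$.

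With every fibre of $f$ of size exactly two, I would finish by letting $a\colon\{1,\dots,m\}\to\{1,\dots,m\}$ be the permutation that exchanges the two elements of $f^{-1}(j)$ for each $j\in J$; this is a fixed-point-free involution, and Theorem~\ref{cenfunslideg}(3) supplies a braided tensor equivalence $\C_i\cong\C_{a(i)}^\rev$ for every $i$, which is the assertion. (An alternative route avoids the dimension count: Corollary~\ref{e Muger} and Proposition~\ref{e transitivity} give $\Z(\A,\,\sVec)\cong\underset{j\in J}{\bts}\,(\C_j\bts\C_j^\rev)$, and comparing this with the given $s$-simple decomposition via the uniqueness in Theorem~\ref{decomposition theorem} again pairs up the $\C_i$; but then one must separately check that the tensor product has no nontrivial Tannakian subcategory.) I expect the main obstacle to be precisely the bookkeeping in the displayed dimension identity — tracking which factors of $\FPdim(\C_1\bts\cdots\bts\C_m)$ and of $\FPdim(\A)^2$ cancel — together with making sure the strict inequality $\FPdim(\C_j)>2$ is available; the remaining steps are direct invocations of Theorems~\ref{cenfunslideg} and~\ref{insu}.
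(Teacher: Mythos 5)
Your proposal is correct. Both you and the paper reduce the statement to Theorem~\ref{cenfunslideg}, applied to the surjective central functor obtained by composing the given equivalence with the forgetful functor $\Z(\A,\,\sVec)\to\A$; the only real content is then to see that the map $f$ has no singleton fibres. The paper's proof is a two-line remark: since in this situation the braided lift $F'$ appearing in the proof of Theorem~\ref{cenfunslideg} is the given equivalence onto $\Z(\A,\,\sVec)\cong\underset{j\in J}{\bts}\,(\C_j\bts\C_j^\rev)$, the embedding $\{1,\dots,m\}\hookrightarrow J\sqcup J$ constructed there is a bijection, so $f$ is two-to-one; this implicitly leans on the internal construction in that proof (equivalently, on the uniqueness of the s-simple decomposition of the target). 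You instead treat Theorem~\ref{cenfunslideg} as a black box and exclude singleton fibres by the Frobenius--Perron count $\prod_{j\in J_1}\FPdim(\C_j)=2^{|J_1|}$, using $\FPdim(\C\bts\D)=\tfrac12\FPdim(\C)\FPdim(\D)$, $\FPdim(\Z(\A,\,\sVec))=\tfrac12\FPdim(\A)^2$ (from \eqref{prodFP}, \eqref{dimZ} and non-degeneracy of $\Z(\A)$), and $\FPdim(\C_j)>2$ for an s-simple category (non-pointed and containing $\sVec$); your bookkeeping, including $2|J|-m=|J_1|$ and the equality of dimensions across fibres supplied by part~(3), checks out. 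The trade-off: the paper's argument is shorter but relies on the proof, not just the statement, of Theorem~\ref{cenfunslideg}; yours is longer but self-contained at the level of statements and sidesteps the issue you rightly flagged for your alternative route, namely whether the ambient product has non-trivial Tannakian subcategories before invoking the uniqueness in Theorem~\ref{decomposition theorem}. The final step, reading off the fixed-point-free involution and the equivalences $\C_i\cong\C_{a(i)}^{\rev}$ from part~(3), is identical in both arguments.
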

\begin{proof}
Let us use the notation of Theorem~\ref{cenfunslideg}.
In this case $\{1,\dots, m\} = J \sqcup J$ and the map $f: \{1,\dots, m\} \to J$ is two-to-one.
Hence, it gives rise to a fixed point free involution of  $\{1,\dots, m\}$.
\end{proof}

\begin{remark}
Corollary~\ref{relations in sW} means that there are no non-trivial relations between
the Witt classes of completely anisotropic  s-simple braided categories in $s\W$
except possibly relations of the form $[\C]=[\C]^{-1}$.
\end{remark}

\begin{corollary}
\label{slight property S}
Let $\C$ be a completely anisotropic slightly degenerate
braided fusion category. Suppose
that $\C_{pt} = \sVec$.  If $[\C]\neq [\sVec]$ 
then the order of $[\C]$ in $\sW$ is either $2$ or $\infty$. 
\end{corollary}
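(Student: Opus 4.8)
The plan is to deduce this from Corollary~\ref{relations in sW} applied to the tensor decomposition of $\C$ from Theorem~\ref{decomposition theorem}. Since $\C_{pt} = \sVec$, we may write $\C \cong \C_1 \bts \cdots \bts \C_m$ with each $\C_i$ completely anisotropic and s-simple, and $[\C] = [\C_1]\cdots[\C_m]$ in $\sW$. The element $[\C]$ has finite order, say $N$, precisely when $[\C]^N = [\sVec]$, i.e.\ when the $N$-fold product $\C^{\bts N}$ is Witt-trivial over $\sVec$; by Proposition~\ref{whenCisinE} this means $\C^{\bts N} \cong \Z(\A,\,\sVec)$ for some fusion category $\A$ over $\sVec$. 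Now $\C^{\bts N} \cong \C_1^{\bts N} \bts \cdots \bts \C_m^{\bts N}$ is a tensor product of $Nm$ completely anisotropic s-simple categories (each $\C_i$ appearing $N$ times), so Corollary~\ref{relations in sW} supplies a fixed-point-free involution $a$ of the index set pairing each factor $\C_i$ (in its $k$-th copy) with some $\C_j^{\rev}$ (in some copy).

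First I would record the consequences of this involution at the level of the factors. For each $i$, the $N$ copies of $\C_i$ must be matched by $a$; each such copy is sent either to a copy of some $\C_j$ with $\C_i \cong \C_j^{\rev}$, or to another copy of $\C_i$ with $\C_i \cong \C_i^{\rev}$. In either case, for every $i$ there is some $j$ (possibly $j=i$) with $\C_i \cong \C_j^{\rev}$; grouping the s-simple factors of $\C$ into equivalence classes and their reverses, the existence of a fixed-point-free involution on the full multiset of $Nm$ factors forces a counting condition. If $[\C] \neq [\sVec]$, then $\C$ is not itself $\sVec$, so $m \geq 1$ and not all factors are "self-reverse-trivial"; but the precise obstruction to $N$ being odd comes from parity. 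If $N$ is odd, then within the block of factors equivalent to a fixed $\C_i$ (together with those equivalent to $\C_i^{\rev}$, if $\C_i \not\cong \C_i^{\rev}$), the involution $a$ must pair things up, and an odd number of copies of $\C_i$ cannot be internally paired unless $\C_i \cong \C_i^{\rev}$ and they are paired among themselves --- but an odd count has no perfect matching. So $N$ odd forces, for each $i$, that the copies of $\C_i$ get paired with copies of $\C_j^{\rev}$ for $j \neq i$ in the same $\C_i$-block; chasing the bijection, since the involution is fixed-point-free and respects these blocks, one concludes that actually the only way is for the block to be empty, i.e.\ $m = 0$, contradicting $[\C] \neq [\sVec]$.

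More carefully, I would argue as follows: let $N$ be the order of $[\C]$ and suppose $N$ is finite and odd. Consider the multiset $M$ of the $Nm$ s-simple factors of $\C^{\bts N}$, each labelled by its isomorphism class. Group $M$ into "reverse-pairs of classes": partition the set of isomorphism classes of the $\C_i$ into orbits under $\mathcal{D} \mapsto \mathcal{D}^{\rev}$. For an orbit $\{\mathcal{D}\}$ with $\mathcal{D} \cong \mathcal{D}^{\rev}$, if $\mathcal{D}$ occurs with multiplicity $r$ among the $\C_i$ then it occurs $Nr$ times in $M$; the fixed-point-free involution $a$ cannot pair a factor with itself, but it could pair two distinct copies of $\mathcal{D}$, and since $Nr$ has the same parity as $r$, the submultiset of $\mathcal{D}$-copies admits a perfect matching iff $r$ is even. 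For an orbit $\{\mathcal{D}, \mathcal{D}^{\rev}\}$ with $\mathcal{D} \not\cong \mathcal{D}^{\rev}$, say $\mathcal{D}$ occurs $r$ times and $\mathcal{D}^{\rev}$ occurs $s$ times among the $\C_i$; in $M$ they occur $Nr$ and $Ns$ times. Any pairing under $a$ must match $\mathcal{D}$-copies with $\mathcal{D}^{\rev}$-copies (a $\mathcal{D}$-copy cannot be matched to another $\mathcal{D}$-copy since $\mathcal{D} \not\cong \mathcal{D}^{\rev}$), forcing $Nr = Ns$, i.e.\ $r = s$. Thus the existence of $a$ is equivalent to: $r_\mathcal{D} = r_{\mathcal{D}^{\rev}}$ for every class, and $r_\mathcal{D}$ even whenever $\mathcal{D} \cong \mathcal{D}^{\rev}$ --- but these conditions do not involve $N$ at all, so if they hold for the $Nm$ factors they hold for the $m$ factors of $\C$ itself, meaning $[\C]^1 = [\sVec]$, contradiction. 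Hence $N$ must be even. It remains to show the order is then exactly $2$ or $\infty$: if $N$ is finite and even, write $N = 2^a \cdot t$ with $t$ odd; since $t[\C]$ also has finite order $2^a$ and applying the previous analysis to $\C' := \C^{\bts t}$ (whose s-simple decomposition is the $t$-fold multiple of that of $\C$) one sees $[\C'] = [\C]^t$ has the same order as $[\C]$ up to odd factors, reducing to $t = 1$ by the no-odd-torsion argument; then, for $[\C]$ of $2$-power order $2^a > 2$, applying the involution argument to $\C^{\bts 2^{a-1}}$ shows $[\C]^{2^{a-1}} = [\sVec]$, contradicting minimality unless $a \leq 1$.

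The main obstacle I anticipate is getting the bookkeeping of the fixed-point-free involution exactly right --- in particular being careful that Corollary~\ref{relations in sW} is applied to $\C^{\bts N}$ written as a genuine tensor product of completely anisotropic s-simple factors (which requires invoking that the $\C_i$ remain completely anisotropic and s-simple, already noted in the excerpt just before Theorem~\ref{cenfunslideg}), and that the parity argument correctly isolates the odd part of $N$. The key insight making it work is that the combinatorial condition extracted from the involution --- equal multiplicities of each class and its reverse, evenness on self-reverse classes --- is independent of how many times we tensor $\C$ with itself, so finite order already forces order dividing $2$ (after stripping odd factors, which contribute nothing), giving order $2$ or $\infty$.
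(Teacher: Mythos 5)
Your route is the paper's: decompose $\C$ into completely anisotropic s-simple factors via Theorem~\ref{decomposition theorem}, convert finiteness of the order into $\C^{\boxtimes_{\sVec}N}\cong\Z(\A,\,\sVec)$ via Proposition~\ref{whenCisinE}, and apply Corollary~\ref{relations in sW} to the resulting product of $Nm$ factors. The combinatorial core is sound: the fixed-point-free reverse-pairing forces $r_{\D}=r_{\D^{\rev}}$ for every s-simple class $\D$ occurring in $\C$ (plus a parity condition on self-reverse classes when $N$ is odd), and a reverse-pairing of the factors of any product does force Witt triviality, since $[\D][\D^{\rev}]=[\sVec]$ by Corollary~\ref{e Muger} and Proposition~\ref{whenCisinE}; so your exclusion of odd $N$ and your final ``pair the factors of an even power'' step are both correct.

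The one step that is not justified as written is the reduction ``to $t=1$ by the no-odd-torsion argument.'' If that phrase leans on Corollary~\ref{no odd in sW} it is circular, since that corollary is deduced from the present statement; and if it means re-running your odd-order parity argument on $\C^{\boxtimes_{\sVec}t}$ or on $[\C]^{2^{a}}$, it does not transfer verbatim, because the multiplicities of the s-simple classes in those powers are $t\,r_{\D}$, resp. $2^{a}r_{\D}$, so the parity dichotomy you exploited for $\C$ itself is no longer the operative mechanism. Fortunately the whole $N=2^{a}t$ bookkeeping is unnecessary: once any finite order yields $r_{\D}=r_{\D^{\rev}}$, the factors of $\C^{\boxtimes_{\sVec}2}$ occur with balanced, even multiplicities $2r_{\D}$, hence admit a fixed-point-free reverse-pairing, giving $[\C]^{2}=[\sVec]$ outright and hence order exactly $2$. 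This is essentially the paper's finish, phrased even more directly: from the involution on the $Nm$ factors one extracts an involution $a$ of $\{1,\dots,m\}$ (fixed points allowed) with $\C_{i}\cong\C_{a(i)}^{\rev}$, and after cancelling the two-element orbits $[\C]$ is a product of classes with $\C_{i}\cong\C_{i}^{\rev}$, each of order at most $2$. So: same approach and correct core; replace the $2$-adic case analysis and the murky $t=1$ step by this one-line cancellation.
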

\begin{proof}
By Theorem~\ref{decomposition theorem} we have
$\C\cong \C_1\bts \,\cdots\, \bts  \C_k $, where $\C_1,\dots,\C_k$
are completely anisotropic s-simple categories. 

By Proposition~\ref{whenCisinE} the order of $[\C]$ is finite if and only if
$\C^{\bt n} \cong \Z(\A,\, \sVec)$ for some fusion category $\A$ over $\sVec$.
It follows from Corollary~\ref{relations in sW} that there is an involution
$a$ of $\{1,\dots,k\}$ (not necessarily fixed point free) such that
$\C_i \cong \C_{a(i)}^\rev$ for each $i=1,\dots,k$.  Thus, after cancellations
$[\C]$ is equal to the product of classes $[\C_i]$ with $\C_i \cong \C_i^\rev$,
i.e., of elements of order $2$.
\end{proof}

\subsection{A canonical homomorphism $\W \to s\W$}
\label{homS}

Let $\W$ be the Witt group of non-degenerate braided fusion categories, see
Section~\ref{Prelim-Witt} and  \cite{DMNO}.
Define a map 
\begin{equation}
\label{map S}
S: \W \to \sW : [\C] \mapsto [\C \bt \sVec].
\end{equation}

\begin{proposition}
\label{S well defined}
The map \eqref{map S} is a well defined homomorphism.
\end{proposition}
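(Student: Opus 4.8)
The plan is to verify two things: that $S$ sends Witt-equivalent non-degenerate categories to Witt-equivalent slightly degenerate categories, and that it is compatible with the group operations. For well-definedness, suppose $[\C_1] = [\C_2]$ in $\W$, so that $\C_1 \bt \Z(\A_1) \cong \C_2 \bt \Z(\A_2)$ for some fusion categories $\A_1, \A_2$. Tensoring both sides with $\sVec$ gives $\C_1 \bt \Z(\A_1) \bt \sVec \cong \C_2 \bt \Z(\A_2) \bt \sVec$. The key point is to reinterpret this as an equality of Witt classes in $s\W = \W(\sVec)$; for this I need to recognize each $\Z(\A_i) \bt \sVec$ as lying in the trivial class $[\sVec]$ of $s\W$, i.e.\ as $\Z(\B_i, \sVec)$ for a suitable fusion category $\B_i$ over $\sVec$.

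The natural candidate is $\B_i = \A_i \bt \sVec$, regarded as a fusion category over $\sVec$ via the embedding $\sVec \hookrightarrow \Z(\A_i \bt \sVec)$ coming from the $\sVec$ factor. Then by Proposition~\ref{e transitivity} (or directly) $\Z(\A_i \bt \sVec,\, \sVec) \cong \Z(\A_i) \bt \Z(\sVec, \sVec)$, and since $\Z(\sVec, \sVec)$ is the centralizer of $\sVec$ inside $\Z(\sVec) \cong \sVec \bt \sVec$, which is $\sVec$ itself, we get $\Z(\A_i \bt \sVec,\, \sVec) \cong \Z(\A_i) \bt \sVec$. Hence $\C_i \bt \sVec$ and $\C_i \bt \Z(\A_i) \bt \sVec \cong (\C_i\bt\sVec) \bte \Z(\A_i \bt \sVec,\, \sVec)$ represent the same class in $s\W$, so from $\C_1 \bt \Z(\A_1) \bt \sVec \cong \C_2 \bt \Z(\A_2) \bt \sVec$ we conclude $[\C_1 \bt \sVec] = [\C_2 \bt \sVec]$ in $s\W$. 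This shows $S$ is well defined. I also need to check that $\C \bt \sVec$ really is an object of the sort $\W(\sVec)$ parameterizes, i.e.\ that it is slightly degenerate over $\sVec$: this is Example~\ref{slight examples}(ii), and the braided embedding $\sVec \to (\C\bt\sVec)'$ is fully faithful automatically.

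For the homomorphism property, given non-degenerate $\C, \D$ we must show $[(\C \bt \D)\bt \sVec] = [\C\bt\sVec] \bte [\D \bt \sVec]$ in $s\W$, i.e.\ $(\C\bt\sVec)\bte(\D\bt\sVec) \cong (\C\bt\D\bt\sVec)$ up to multiplication by a class of the form $\Z(\A,\sVec)$. By definition of $\bte$ as a de-equivariantization (Remark~\ref{tensor over E = equiv} with $\E = \sVec$), $(\C\bt\sVec)\bte(\D\bt\sVec) \cong (\C\bt\sVec\bt\D\bt\sVec)\bt_\F \Vec$ where $\F$ is the Tannakian subcategory of $\sVec\bt\sVec$ with $(\sVec\bt\sVec)\bt_\F\Vec \cong \sVec$; concretely $\F \cong \Rep(\bZ/2\bZ)$ generated by $\delta\bt\delta$, and de-equivariantizing $\C\bt\D\bt\sVec\bt\sVec$ by the corresponding regular algebra recovers $\C\bt\D\bt\sVec$ (the two copies of $\delta$ get identified, leaving one). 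So the two sides are in fact braided equivalent over $\sVec$, and in particular lie in the same Witt class; the unit $[\Vec]$ goes to $[\sVec]$, the unit of $s\W$. The main obstacle is bookkeeping: being careful that all the equivalences above are equivalences \emph{over} $\sVec$ (respecting the distinguished $\sVec$-embeddings), so that they descend to genuine identities in $s\W$ rather than merely braided equivalences of the underlying categories; the tool for this is Proposition~\ref{e transitivity} together with Remark~\ref{tensor over E = equiv}.
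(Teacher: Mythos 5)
Your argument is correct and is essentially the paper's: the paper also reduces to (a) showing $\Z(\A)\bt\sVec$ is a trivial class in $s\W$ by exhibiting it as a relative center --- it views $\sVec$ as the fermionic subcategory of $\Z(\Vec_{\bZ/2\bZ})$ and identifies $\Z(\A)\bt\sVec$ with the centralizer of $\Vec\bt\sVec$ in $\Z(\A\bt\Vec_{\bZ/2\bZ})$, which is the same construction as your $\B_i=\A_i\bt\sVec$ --- and (b) the multiplicativity computation $(\C_1\bt\sVec)\bts(\C_2\bt\sVec)\cong\C_1\bt\C_2\bt\sVec$, which you merely spell out via the de-equivariantization along $\delta\bt\delta$. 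One slip to correct: $\Z(\sVec)$ is \emph{not} braided equivalent to $\sVec\bt\sVec$ (the functor $\C\bt\C^{\rev}\to\Z(\C)$ is an equivalence only for non-degenerate $\C$; here $\Z(\sVec)\cong\Z(\Vec_{\bZ/2\bZ})$ has exactly one fermion among its three non-trivial invertibles, whereas $\sVec\bt\sVec$ has two). The conclusion you actually use, $\Z(\sVec,\sVec)\cong\sVec$, is nevertheless true: the centralizer of the canonically embedded $\sVec$ in $\Z(\sVec)$ contains $\sVec$ and has Frobenius--Perron dimension $2$ by \eqref{prodFP}, so it equals $\sVec$, and the rest of your argument goes through unchanged.
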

\begin{proof}
It suffices to check that (a) $S$ maps the trivial class in $\W$ to the trivial class in $s\W$
and that (b) $S$ is multiplicative. Let $\C$ be a non-degenerate
braided fusion category such that  $\C \cong \Z(\A)$ for some fusion category $\A$.
Let us view $\sVec$ as a symmetric
subcategory of $\Z(\Vec_{\mathbb{Z}/2\mathbb{Z}})$. Then $\C \bt \sVec$  is the centralizer
of $\Vec \bt \sVec$ in $\C \bt \Z(\Vec_{\mathbb{Z}/2\mathbb{Z}})\cong \Z(\A \bt \Vec_{\mathbb{Z}/2\mathbb{Z}})$,
i.e., the class of $S([\C])=[\C \bt \sVec]$ in $s\W$ is trivial. This proves (a).  The verification
of (b) is straightforward:
\[
S([\C_1]) S([\C_2]) = [(\C_1 \bt \sVec) \bts (\C_2 \bt \sVec) ] = [\C_1 \bt \C_2 \bt \sVec ] = S([\C_1][\C_2]),
\]
for all non-degenerate braided fusion categories $\C_1$ and $\C_2$.
\end{proof}

Recall from Section~\ref{Prelim-Witt} that $\W$ contains a cyclic subgroup $\W_{Ising}$ of order $16$
generated by the Witt classes of Ising braided fusion categories. 

\begin{proposition}
\label{kernel of S}
The kernel of homomorphism \eqref{map S} is $\W_{Ising}$.
\end{proposition}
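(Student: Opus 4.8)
The plan is to show both inclusions $\W_{Ising} \subseteq \Ker S$ and $\Ker S \subseteq \W_{Ising}$.

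For the inclusion $\W_{Ising} \subseteq \Ker S$, it suffices to show that $S([\mathcal{I}]) = [\sVec]$ for a single Ising category $\mathcal{I}$, since $S$ is a homomorphism and $[\mathcal{I}]$ generates $\W_{Ising}$. So I would show $\mathcal{I} \bt \sVec \in [\sVec]$ in $s\W$. The key point is that an Ising category $\mathcal{I}$ satisfies $\mathcal{I}_{pt} \cong \sVec$ (stated in Section~\ref{Prelim-brcat}), so $\mathcal{I}$ contains a copy of $\sVec$; moreover $\mathcal{I}$ is non-degenerate. I expect the cleanest route is to exhibit a connected \'etale algebra in $\mathcal{I} \bt \sVec$ whose category of local modules is $\sVec$ — or, using Proposition~\ref{whenCisinE}, to show $\mathcal{I} \bt \sVec \cong \Z(\A,\sVec)$ for some fusion category $\A$ over $\sVec$. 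Concretely, let $\delta$ generate $\mathcal{I}_{pt} = \sVec \subset \mathcal{I}$ and let $\bar\delta$ generate the $\sVec$ factor; the object $\be \oplus (\delta \bt \bar\delta)$ should be a connected \'etale algebra $A$ in $\mathcal{I} \bt \sVec$ lying in the symmetric center, with $A \cap (\mathcal{I}\bt \be) = \be = A \cap (\be \bt \sVec)$, and then $(\mathcal{I}\bt\sVec)_A^0$ has Frobenius-Perron dimension $\FPdim(\mathcal{I}\bt\sVec)/\FPdim(A)^2 = 8/4 = 2$, forcing it to be $\sVec$; by the Proposition preceding this one (stating $[\C]=[\C_A^0]$ in $\W(\E)$ for $A\cap\E = \be$, applied with $\E = \sVec$), this gives $S([\mathcal{I}]) = [\sVec]$. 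One must check $A$ is \'etale and $A\cap\sVec = \be$ with respect to the correct copy of $\sVec$, which should follow from the explicit description of Ising categories in \cite[Appendix B]{DGNO1}.

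For the reverse inclusion $\Ker S \subseteq \W_{Ising}$, suppose $[\C] \in \Ker S$, i.e., $\C \bt \sVec \in [\sVec]$ in $s\W$. By Proposition~\ref{whenCisinE} there is a fusion category $\A$ over $\sVec$ with $\C \bt \sVec \cong \Z(\A,\sVec)$ as braided fusion categories over $\sVec$. Unwinding the definition of $\Z(\A,\sVec)$ and using Corollary~\ref{e Muger} or the relation $\Z(\A,\sVec)\bt_{\sVec}$-machinery, one should extract that $\C \bt \sVec \cong \C_A^0$ for an appropriate \'etale algebra, and via Theorem~\ref{etale in CD} / Proposition~\ref{parent} relate $\C$ to the center of a genuine fusion category up to an Ising factor. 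The essential arithmetic input is the comparison of multiplicative central charges: Ising categories realize all sixteenth roots of $e^{2\pi i/8}$, and the cokernel of $S$ restricted to $\W_{Ising}$ is trivial precisely because $\sVec$ kills exactly the Ising classes. More carefully, I would argue that $[\C] \in \Ker S$ forces a completely anisotropic representative $\C_0$ of $[\C]$ to become a Drinfeld center after tensoring with $\sVec$; combining this with the classification of \'etale algebras in $\C_0 \bt \sVec$ (Theorem~\ref{etale in CD}) and the fact that $\C_0$ has no non-trivial Tannakian subcategories, the only way $\C_0 \bt \sVec$ acquires a Lagrangian-type algebra over $\sVec$ is if $\C_0$ is itself (Witt-equivalent to) an Ising power, hence $[\C] \in \W_{Ising}$.

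The main obstacle I anticipate is the second inclusion: translating "$\C\bt\sVec$ is in the trivial class of $s\W$" into a structural statement about $\C$ itself. The subtlety is that tensoring with $\sVec$ both enlarges the available \'etale algebras (by Theorem~\ref{etale in CD}, new algebras of the form $A(A_1,A_2,\C_1,\D_1,\phi)$ appear with the $\sVec$ factor) and changes the notion of "center" to "center over $\sVec$". I expect one needs to carefully track which connected \'etale algebra in $\C\bt\sVec$ witnesses $\C\bt\sVec \cong \Z(\A,\sVec)$, verify it restricts trivially to the $\sVec$ factor, and then use the injectivity of central functors from completely anisotropic categories (Section~\ref{Prelim-etale}) together with the decomposition results to pin down $\C_0$ up to an Ising contribution. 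Controlling the interaction between the $\delta\in\C'$ of a would-be slightly degenerate intermediate category and the $\bar\delta$ of the auxiliary $\sVec$ is where the delicate bookkeeping lies.
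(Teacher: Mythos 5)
Your first inclusion $\W_{Ising}\subseteq\Ker S$ is essentially correct, though by a more computational route than the paper needs: since both $\delta\in\mathcal{I}$ and $\bar\delta\in\sVec$ are fermions, $\delta\bt\bar\delta$ has trivial self-braiding, so $A=\be\oplus(\delta\bt\bar\delta)$ is the regular algebra of a Tannakian $\Rep(\mathbb{Z}/2\mathbb{Z})\subset\mathcal{I}\bt\sVec$ with $A\cap(\be\bt\sVec)=\be$, and the category of local $A$-modules has Frobenius--Perron dimension $2$ and contains $\sVec$, hence equals $\sVec$; with the proposition $[\C]=[\C_A^0]$ in $\W(\E)$ this gives $S([\mathcal{I}])=[\sVec]$. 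One slip: $A$ does \emph{not} lie in the symmetric center of $\mathcal{I}\bt\sVec$ (that center is $\be\bt\sVec$, since $\mathcal{I}$ is non-degenerate and $\delta$ does not centralize the object of dimension $\sqrt2$); this is harmless, because commutativity of $A$ only requires the self-braiding of $\delta\bt\bar\delta$ to be $+\id$, not centrality.

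The genuine gap is the inclusion $\Ker S\subseteq\W_{Ising}$, which you acknowledge you cannot complete. Your sketch (\'etale algebras in $\C_0\bt\sVec$, central charges, ``the only way $\C_0\bt\sVec$ acquires a Lagrangian-type algebra over $\sVec$ is if $\C_0$ is an Ising power'') asserts exactly what has to be proved, and the central-charge remark is not a usable argument. The missing step is short and structural: if $[\C]\in\Ker S$, then by Proposition~\ref{whenCisinE} one has $\C\bt\sVec\cong\Z(\A,\sVec)$ for some fusion category $\A$ over $\sVec$; regard $\C=\C\bt\Vec$ as a fusion subcategory of $\Z(\A)$. Since $\C$ is non-degenerate, M\"uger's decomposition theorem gives $\Z(\A)\cong\C\bt\C_1$, where $\C_1$ is the centralizer of $\C$ in $\Z(\A)$. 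A dimension count using $\FPdim(\Z(\A))=\FPdim(\A)^2$ and $\FPdim(\Z(\A,\sVec))=\tfrac12\FPdim(\A)^2=2\,\FPdim(\C)$ yields $\FPdim(\C_1)=4$, and $\sVec\subset\C_1$ because the $\sVec$-factor of $\C\bt\sVec$ centralizes $\C$. Non-degenerate braided fusion categories of Frobenius--Perron dimension $4$ containing a fermion are precisely the Ising categories and the pointed categories $\C(A,q)$ with $|A|=4$ and $q(u)=-1$ for some $u$, whose Witt classes lie in (indeed exhaust and generate) $\W_{Ising}$ by Section~\ref{Prelim-Witt}; since $[\Z(\A)]$ is trivial in $\W$, we get $[\C]=[\C_1]^{-1}\in\W_{Ising}$. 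No central-charge or Lagrangian-algebra classification is needed; it is this passage from the relative center $\Z(\A,\sVec)$ to the full center $\Z(\A)$ and the identification of the small complementary factor that your proposal lacks.
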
 
\begin{proof}
Let $\C$ be a non-degenerate
braided fusion category such that
\[
\C \bt \sVec \cong \Z(\A,\, \sVec)
\]
for some fusion category $\A$ over $\sVec$.  
Then $\Z(\A)\cong \C \bt \C_1$ where $\C_1$ is a non-degenerate braided
fusion category of the Frobenius-Perron dimension $4$ containing $\sVec$.  It is easy to
see that categories with the above property are precisely Ising braided categories and pointed
braided categories $\C(A, \, q)$ associated with metric groups $(A,q)$ of order 4 such that there
exists $u\in A$ with $q(u)=-1$.
Since $[\C]=[\C_1]^{-1}$ in $\W$ we see that the kernel of $S$
is $\W_{Ising}$ (see Section \ref{Prelim-Witt}).
\end{proof}

\begin{question}
At the moment of writing we do not know whether the homomorphism \eqref{map S} is surjective.
More generally, can one always embed a slightly degenerate  braided fusion category $\C$ into
a non-degenerate braide fusion category $\D$ such that $\FPdim(\D)=2 \FPdim(\C)$? 
\end{question}

\subsection{The structure of $\sW$}
\label{structure}

Let $s\W_{pt}$ denote the subgroup of $s\W$
generated by the Witt classes of  slightly degenerate pointed braided fusion categories.

\begin{proposition}
\label{sWpt}
We have  
\[
s\W_{pt} = \bigoplus_{p \text{ is prime}}\, s\W_{pt}(p),
\] 
where $s\W_{pt}(p)$ is the group generated by classes of slightly degenerate pre-metric
$p$-groups:
\[
s\W_{pt}(p) \cong 
\begin{cases}
\mathbb{Z}/2\mathbb{Z} & \text{if } p=2, \\
\mathbb{Z}/2\mathbb{Z} \oplus \mathbb{Z}/2\mathbb{Z} & \text{if } p\equiv 1\pmod{4}, \\
\mathbb{Z}/4\mathbb{Z} & \text{if } p\equiv 3 \pmod{4}. \\
\end{cases}
\] 
\end{proposition}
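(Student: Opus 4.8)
The plan is to identify $s\W_{pt}$ with the image of the classical Witt group $\W_{pt}$ under the homomorphism $S\colon\W\to\sW$ of \eqref{map S}, and then to read off its structure from the known description of $\W_{pt}$ together with the computation $\ker S=\W_{Ising}$ of Proposition~\ref{kernel of S}. Recall (Section~\ref{Prelim-brcat}) that every slightly degenerate pointed braided fusion category has the form $\C_0\bt\sVec$ with $\C_0$ non-degenerate pointed, and that conversely every such $\C_0\bt\sVec$ is slightly degenerate and pointed; since $[\C_0\bt\sVec]=S([\C_0])$, the generators of $s\W_{pt}$ are exactly the elements of $S(\W_{pt})$. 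Hence $s\W_{pt}=S(\W_{pt})\cong\W_{pt}/K$, where $K:=\W_{pt}\cap\ker S=\W_{pt}\cap\W_{Ising}$.

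The next step is to pin down $K$. Write $[\mathcal I]$ for the class of an Ising category, so $\W_{Ising}=\langle[\mathcal I]\rangle$ is cyclic of order $16$ and every odd power of $[\mathcal I]$ has order $16$. On the other hand $\W_{pt}=\bigoplus_{p}\W_{pt}(p)$ by \eqref{Wpt} has exponent dividing $8$ (the summands having exponent $8$, $4$, or $2$), so no odd power of $[\mathcal I]$ can lie in $\W_{pt}$; thus $K$ is contained in the unique index-$2$ subgroup of $\W_{Ising}$. By the explicit description in Section~\ref{Prelim-Witt} that subgroup consists of the classes $[\C(A,q)]$ of pointed categories attached to metric groups $(A,q)$ of order $4$, and these are metric $2$-groups, hence lie in $\W_{pt}(2)$ and therefore in $K$. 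So $K$ is exactly the index-$2$ subgroup of $\W_{Ising}$: a cyclic group of order $8$ contained in $\W_{pt}(2)$.

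Now I would assemble the pieces. Since $K\subseteq\W_{pt}(2)$, dividing by $K$ only affects the $p=2$ summand, giving
\[
s\W_{pt}\;\cong\;\bigl(\W_{pt}(2)/K\bigr)\;\oplus\;\bigoplus_{p\ \text{odd}}\W_{pt}(p),
\]
and, unwinding the splitting $\C(A,q)\cong\C(A_0,q_0)\bt\sVec$, the $p$-th summand is $s\W_{pt}(p)=S(\W_{pt}(p))$. For $p$ odd, $S$ is injective on $\W_{pt}(p)$, so $s\W_{pt}(p)\cong\W_{pt}(p)$, which is $\mathbb{Z}/2\mathbb{Z}\oplus\mathbb{Z}/2\mathbb{Z}$ for $p\equiv 1\pmod 4$ and $\mathbb{Z}/4\mathbb{Z}$ for $p\equiv 3\pmod 4$. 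For $p=2$, $K$ is a cyclic subgroup of order $8$ inside $\W_{pt}(2)\cong\mathbb{Z}/8\mathbb{Z}\oplus\mathbb{Z}/2\mathbb{Z}$, a group of order $16$, so $s\W_{pt}(2)=\W_{pt}(2)/K$ has order $2$ and hence equals $\mathbb{Z}/2\mathbb{Z}$; note one never needs to determine which of the two order-$8$ cyclic subgroups of $\mathbb{Z}/8\mathbb{Z}\oplus\mathbb{Z}/2\mathbb{Z}$ the group $K$ is.

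The main obstacle is the middle step — the identification $K=\W_{pt}\cap\W_{Ising}$ and the verification $K\subseteq\W_{pt}(2)$ — but I expect no genuine difficulty: it rests on the explicit description of $\W_{Ising}$ recalled in Section~\ref{Prelim-Witt} (which powers of $[\mathcal I]$ are pointed, and that the pointed ones arise from order-$4$ metric groups) together with the elementary fact that $\W_{pt}$ has exponent dividing $8$. Everything else follows formally from Propositions~\ref{S well defined} and~\ref{kernel of S} and the structure of $\W_{pt}$ recalled in \eqref{Wpt} and Section~\ref{Prelim-Witt}.
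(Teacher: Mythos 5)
Your proposal is correct and follows essentially the same route as the paper: identify $s\W_{pt}$ as the image of $\W_{pt}$ under $S$ (via the splitting $\C \cong \C_0 \bt \sVec$ of slightly degenerate pointed categories) and compute that $\W_{pt}\cap\mbox{Ker}(S)=\W_{pt}\cap\W_{Ising}$ is the cyclic subgroup of order $8$ sitting inside $\W_{pt}(2)$, so only the $p=2$ summand changes. The only difference is that you spell out in detail the identification of this intersection (via the exponent of $\W_{pt}$ and the explicit description of the index-$2$ subgroup of $\W_{Ising}$), which the paper simply asserts.
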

\begin{proof}
By \cite[Corollary A.19]{DGNO1} $s\W_{pt}$ is a surjective image of
the restriction of homomorphism \eqref{map S} on the subgroup 
\[
\W_{pt}  = \bigoplus_{p \text{ is prime}}\, \W_{pt}(p) \subset \W. 
\]
The intersection of $\W_{pt}$ with $\mbox{Ker}(S)=\W_{Ising}$ is contained
in $\W_{pt}(2)\cong \mathbb{Z}/8\mathbb{Z} \oplus \mathbb{Z}/2\mathbb{Z}$ and is isomorphic to $\mathbb{Z}/8\mathbb{Z}$,
whence $s\W_{pt}(2) \cong  \mathbb{Z}/2\mathbb{Z}$ and $s\W_{pt}(p) \cong  \W_{pt}(p)$ for $p>2$.
\end{proof}

\begin{corollary}
\label{no odd in sW}
The group $s\W$ is $2$-primary, i.e., it has no non-trivial elements of odd order. 
\end{corollary}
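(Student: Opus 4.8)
The plan is to deduce the statement directly from the two structural results already in hand: Proposition~\ref{sWpt}, which exhibits $s\W_{pt}$ as a direct sum of finite $2$-groups, and Corollary~\ref{slight property S}, which says that a completely anisotropic slightly degenerate category with trivial pointed part has Witt class of order $1$, $2$, or $\infty$. The bridge between the two is the canonical decomposition of Theorem~\ref{decomposition theorem}, applied to the completely anisotropic representative furnished by Theorem~\ref{5.12}.

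\textbf{Main steps.} Let $x\in s\W$ be a torsion element; it suffices to prove that its order is a power of $2$. First I would choose, using Theorem~\ref{5.12}, a completely anisotropic slightly degenerate representative $\C$ with $x=[\C]$. Since $\C$ is completely anisotropic it has no Tannakian subcategories other than $\Vec$, so Theorem~\ref{decomposition theorem} gives $\C\cong\C_{pt}\bts\D$ with $\D=\C_1\bts\cdots\bts\C_m$ a product of s-simple subcategories. Next I would check the two bookkeeping facts needed to invoke the prior results: (i) $\D$ is again completely anisotropic — a connected \'etale algebra $B\in\D$ produces the connected \'etale algebra $\be\bts B$ in $\C$, which must be $\be$; and (ii) $\D_{pt}=\sVec$, which is forced by the form of the decomposition of $\C$ (equivalently, it is how $\D=\C_{pt}'$ is constructed in the proof of Theorem~\ref{decomposition theorem}). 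Then in $s\W$ we have $x=[\C_{pt}]\cdot[\D]$. By Proposition~\ref{sWpt} the class $[\C_{pt}]\in s\W_{pt}$ has order a power of $2$. The class $[\D]=x\cdot[\C_{pt}]^{-1}$ is a difference of two torsion elements, hence itself torsion, so Corollary~\ref{slight property S} (applied to $\D$, using (i) and (ii)) forces its order to be $1$ or $2$. Finally, $x$ is a product of two elements of $2$-power order in the abelian group $s\W$, so its order is a power of $2$. As $x$ was an arbitrary torsion element, $s\W$ has no non-trivial element of odd order.

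\textbf{Expected obstacle.} There is no real obstacle internal to this proof: all the substantive work lives in Proposition~\ref{sWpt} and Corollary~\ref{slight property S}. The only points requiring a sentence of justification are that the factor $\D$ in the decomposition inherits complete anisotropy and trivial pointed part — so that Corollary~\ref{slight property S} genuinely applies to it — and the elementary observation that in an abelian group a product of two elements of $2$-power order again has $2$-power order. Both are immediate, so the proof should be short.
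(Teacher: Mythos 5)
Your argument is correct and follows essentially the same route as the paper: pass to the completely anisotropic representative, split it as $\C_{pt}\bts\C_{pt}'$ via Theorem~\ref{decomposition theorem}, and combine Proposition~\ref{sWpt} for the pointed factor with Corollary~\ref{slight property S} for its centralizer. Your extra sentences verifying that the second factor is completely anisotropic, slightly degenerate, and has pointed part $\sVec$ simply make explicit what the paper asserts when it says $\C_{pt}'$ ``satisfies the conditions'' of that corollary.
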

\begin{proof}
Let $\C$ be a completely anisotropic slightly degenerate
braided fusion category. Then $\C \cong \C_{pt} \bts \C_{pt}'$
and the category $\C_{pt}'$ satisfies conditions of Corollary~\ref{slight property S}.
So if the order of $ [\C]$ in $s\W$ is finite then the order of 
$[\C_{pt}']$ in $s\W$ is at most $2$. 
We have
\[
[\C] = [\C_{pt}]  [\C_{pt}'].
\]
By Proposition~\ref{sWpt}  the order of $[\C_{pt}]$
in $s\W$ is even. Therefore, the order of $[\C]$ is even.
\end{proof}

Let $s\W_{2}$ (respectively, $s\W_{\infty}$) denote the subgroups of $s\W$
generated by the Witt classes of completely anisotropic $s$-simple   braided fusion categories of the the Witt order $2$
(respectively, of  infinite Witt order).

\begin{proposition}
\label{sW decomposition}
We have
\begin{equation}
\label{sW=sum}
s\W =  s\W_{pt} \bigoplus s\W_2 \bigoplus s\W_\infty.
\end{equation}
The subgroup $s\W_2$ is an elementary Abelian $2$-group and the subgroup
$s\W_\infty$ is a free Abelian group of countable rank.
\end{proposition}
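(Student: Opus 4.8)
The plan is to establish the direct sum decomposition first and then identify the structure of the two new summands, using the classification of central functors (Theorem~\ref{cenfunslideg}) and its consequences (Corollary~\ref{relations in sW}, Corollary~\ref{slight property S}) as the main engine. First I would show that $\sW_{pt}$, $\sW_2$ and $\sW_\infty$ together generate $\sW$: by Theorem~\ref{5.12} every Witt class has a completely anisotropic representative $\C$, which by Theorem~\ref{decomposition theorem} decomposes as $\C \cong \C_{pt} \bts \C_1 \bts \cdots \bts \C_m$ with each $\C_i$ completely anisotropic and $s$-simple. Hence $[\C] = [\C_{pt}] \cdot \prod_i [\C_i]$. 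Each $[\C_{pt}]$ lies in $\sW_{pt}$, and each $s$-simple factor $[\C_i]$ lies in $\sW_2$ or $\sW_\infty$ according to whether its Witt order is $2$ or $\infty$ (by Corollary~\ref{slight property S} these are the only possibilities for a nontrivial $s$-simple class, the other option being the trivial class which we discard). So the three subgroups generate $\sW$.

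Next I would prove that the sum is direct. The key is Corollary~\ref{relations in sW}: if a product of completely anisotropic $s$-simple classes is trivial in $\sW$, i.e.\ the corresponding tensor product is of the form $\Z(\A,\sVec)$, then the $s$-simple factors pair up via a fixed-point-free involution $a$ with $\C_i \cong \C_{a(i)}^{\rev}$. The plan is to upgrade this: given any relation among generators, pass to completely anisotropic representatives, apply Theorem~\ref{decomposition theorem} to collect all $s$-simple constituents on both sides into a single tensor product whose class is trivial, invoke Corollary~\ref{relations in sW}, and read off that every $s$-simple constituent is cancelled against its reverse. A class $[\C_i]$ with $\C_i \cong \C_i^{\rev}$ contributes to $\sW_2$; one with $\C_i \not\cong \C_i^{\rev}$ contributes to $\sW_\infty$, and the pairing forces $[\C_i]$ and $[\C_i]^{-1}$ to occur with equal multiplicity, so the net contribution to $\sW_\infty$ is zero. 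Meanwhile the pointed part is controlled separately: $\sW_{pt}$ is the image of the classical $\W_{pt}$ under $S$ (Proposition~\ref{sWpt}), it is generated by pointed classes, and no pointed completely anisotropic $s$-simple category is nontrivial, so $\sW_{pt}$ meets the subgroup generated by the non-pointed $s$-simple classes trivially. This gives $\sW = \sW_{pt} \oplus (\sW_2 \oplus \sW_\infty)$, and the same cancellation argument applied to relations involving only $s$-simple classes of order $2$ versus those of infinite order separates $\sW_2$ from $\sW_\infty$.

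It then remains to identify the structure of the two new summands. For $\sW_2$: by construction it is generated by elements of order $2$, and since $\sW$ is abelian and (by Corollary~\ref{no odd in sW}) $2$-primary, a subgroup generated by order-$2$ elements in which, by the cancellation analysis above, the only relations among generators are $[\C_i] = [\C_i]^{-1}$, is an elementary abelian $2$-group. For $\sW_\infty$: the generators are the classes $[\C_i]$ of completely anisotropic $s$-simple categories of infinite Witt order, taken up to the identification $[\C_i] \sim [\C_i]^{-1}$ only when $\C_i \cong \C_i^{\rev}$ (which is excluded here), and Corollary~\ref{relations in sW} shows there are no further relations; hence $\sW_\infty$ is free abelian on this set of generators. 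Countability of the rank follows because there are only countably many fusion categories up to equivalence (finitely many of each Frobenius–Perron dimension type in the relevant sense), so only countably many $s$-simple classes; and I would exhibit infinitely many independent ones — for instance using an infinite family such as suitable de-equivariantizations or centralizers built from the modular categories $\C(sl(2),k)$ (cf.\ Section~\ref{Prelim-affine}) — to see the rank is infinite.

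The main obstacle I anticipate is the directness of the decomposition, specifically turning Corollary~\ref{relations in sW} (which concerns a single tensor product equal to a relative center) into a statement about arbitrary relations in the group $\sW$. This requires care in bringing a relation $\prod [\C_i] = \prod [\D_j]$ to the normal form $\prod [\C_i] \cdot \prod [\D_j]^{\rev} = [\sVec]$, checking that the completely anisotropic representative of the left-hand side really is the tensor product of the individual anisotropic $s$-simple factors (using uniqueness in Theorem~\ref{decomposition theorem} and Theorem~\ref{5.12}), and then matching the involution $a$ from Corollary~\ref{relations in sW} against the bookkeeping of which factors are self-reverse. Once this normalization is done cleanly, the separation into the three summands and the identification of $\sW_2$ and $\sW_\infty$ are essentially formal.
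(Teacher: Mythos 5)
Your plan follows the paper's proof essentially verbatim: generation comes from Theorem~\ref{decomposition theorem} together with Corollary~\ref{slight property S}, and the directness of the sum, the elementary abelian structure of $s\W_2$, and the freeness of $s\W_\infty$ all come from Corollary~\ref{relations in sW}, exactly as in the paper. The only minor divergence is the countable-rank step, where the paper simply pushes the known free abelian subgroup of countable rank of $\W$ (from \cite{DMNO}) into $s\W_\infty$ via the homomorphism $S$, whose kernel $\W_{Ising}$ is finite, rather than exhibiting an independent family of s-simple classes from scratch as you propose.
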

\begin{proof}
That $s\W =  s\W_{pt} + s\W_2 + s\W_\infty$ is a consequence of Theorem~\ref{decomposition theorem}
and Corollary~\ref{slight property S}.
By Corollary~\ref{relations in sW} the only non-trivial relations in $s\W$ involving the classes of s-simple
categories are of the form $2[\C]=0$ for $[\C]\in s\W_2$. This proves that the sum is direct
and that $s\W_\infty$ is free. By \cite[Section 6.4]{DMNO} the Witt group $\W$ contains 
a free Abelian subgroup of countable rank. The homomorphism $S$  embeds this
subgroup into $s\W_\infty$, so the latter has a countable rank as well. 
\end{proof}

\begin{corollary}
\label{venets trudov}
The group $\W$ is $2$-primary.  The maximal finite order of an element
of $\W$ is $32$.
\end{corollary}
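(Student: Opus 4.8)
The plan is to read off both claims from the homomorphism $S:\W\to s\W$ of Section~\ref{homS}, whose kernel is $\W_{Ising}\cong\mathbb{Z}/16\mathbb{Z}$ (Proposition~\ref{kernel of S}), together with the explicit structure of $s\W$ obtained above. First I would take a finite‑order element $x\in\W$ of order $n$. Since $s\W$ is $2$‑primary (Corollary~\ref{no odd in sW}), $S(x)$ has $2$‑power order and lies in the torsion subgroup $T(s\W)$, which by Propositions~\ref{sWpt} and~\ref{sW decomposition} decomposes as
\[
T(s\W)=s\W_{pt}(2)\oplus\Big(\bigoplus_{p\text{ odd}}s\W_{pt}(p)\Big)\oplus s\W_2,
\]
where $s\W_{pt}(2)\oplus s\W_2$ has exponent $2$ while each $s\W_{pt}(p)$ with $p$ odd has exponent $4$.

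The key step is to peel off the ``odd‑$p$ pointed part''. Set $N:=\bigoplus_{p\text{ odd}}\W_{pt}(p)\subset\W_{pt}\subset\W$. Since $\W_{pt}\cap\W_{Ising}\subset\W_{pt}(2)$ (noted in the proof of Proposition~\ref{sWpt}) and the summands $\W_{pt}(p)$ are independent, one has $N\cap\W_{Ising}=0$, so $S$ is injective on $N$ and carries it isomorphically onto $\bigoplus_{p\text{ odd}}s\W_{pt}(p)$. Hence there is a unique $u\in N$ whose image under $S$ equals the projection of $S(x)$ to $\bigoplus_{p\text{ odd}}s\W_{pt}(p)$, and $4u=0$. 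Writing $y:=x-u$, the element $S(y)$ lies in the exponent‑$2$ summand $s\W_{pt}(2)\oplus s\W_2$, so $2y\in\Ker S=\W_{Ising}\cong\mathbb{Z}/16\mathbb{Z}$, whence $32y=16\cdot(2y)=0$; since also $32u=0$ we conclude $32x=0$. Thus every torsion element of $\W$ has order dividing $32$, so in particular $\W$ is $2$‑primary.

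It then remains to exhibit an element of order exactly $32$, and here I would take $\C(sl(2),\,6)$. Its multiplicative central charge is $\xi(\C(sl(2),\,6))=e^{2\pi i\cdot 9/32}$ by the formula recalled in Section~\ref{Prelim-affine}, a primitive $32$nd root of unity; since $\xi$ is a group homomorphism $\W\to\kk^\times$ (see \cite{DMNO}), any finite‑order class with this value of $\xi$ has order divisible by $32$, hence equal to $32$ by the bound just proved. The one non‑formal input is that $[\C(sl(2),\,6)]$ has finite order at all: this category is neither pointed nor Ising (in fact it is completely anisotropic), so finiteness does not follow from $\W_{pt}$ or $\W_{Ising}$, and indeed the subgroup of $\W$ generated by $\W_{Ising}$ and $\W_{pt}$ has exponent only $16$. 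The main obstacle is therefore precisely to establish that $[\C(sl(2),\,6)]$ is torsion in $\W$ --- which is supplied by the determination of all relations among the classes $[\C(sl(2),\,k)]$ carried out in Section~\ref{structure}; once that is in hand, the central charge computation pins the order at $32$.
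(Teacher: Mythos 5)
Your argument is correct, but it proves the corollary along a route that differs from the paper's at both ends. For the upper bound, the paper argues categorically: it first gets $2$-primarity from Corollary~\ref{no odd in sW} (an odd-order class would map to a non-trivial odd-order element of $s\W$, since $\Ker S=\W_{Ising}$ is a $2$-group), and then, assuming a class of order $\geq 64$, splits a completely anisotropic representative as $\mathcal{P}\bt\C_1$ with $\mathcal{P}$ pointed and $(\C_1)_{pt}=\Vec$ or $\sVec$, so that $S([\C_1])$ would have order $\geq 4$ while Corollary~\ref{slight property S}/Proposition~\ref{sW decomposition} force it to lie in the exponent-$2$ part --- a contradiction. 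You instead work purely inside the abelian groups: using that $S$ is injective on $N=\bigoplus_{p\ \text{odd}}\W_{pt}(p)$ (which does follow from the observation in the proof of Proposition~\ref{sWpt} that $\W_{pt}\cap\W_{Ising}\subset\W_{pt}(2)$) and that $S(N)=\bigoplus_{p\ \text{odd}}s\W_{pt}(p)$, you subtract off the odd pointed part and land in the exponent-$2$ summand, getting $32x=0$ for every torsion $x$ in one stroke; this is a clean, slightly stronger statement (torsion exponent $32$) from which $2$-primarity is automatic, at the price of leaning directly on the precise group-theoretic form of Propositions~\ref{sWpt} and~\ref{sW decomposition} rather than on the categorical decomposition. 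For the lower bound, the paper simply cites \cite{DMNO} for the fact that $[\C(sl(2),\,6)]$ has order $32$; you reconstruct it from the relation \eqref{sl26+}, $[\C(sl(2),\,6)]^2=[\C(sl(2),\,2)]^3$, together with $[\C(sl(2),\,2)]$ being an Ising class of order $16$ (giving finiteness, indeed order dividing $32$) and the multiplicative central charge $\xi(\C(sl(2),\,6))=e^{2\pi i\cdot 9/32}$, using that $\xi$ descends to a homomorphism on $\W$ --- both inputs again from \cite{DMNO}, so this is on the same footing as the paper's citation. Two small corrections to your wording: finiteness needs only the single relation \eqref{sl26+}, not the full ``all relations'' theorem, and that relation is stated in the later subsection on the classes $[\C(sl(2),\,k)]$ (a forward reference in the paper's ordering), not in the subsection labelled \ref{structure}; neither affects the validity of the argument.
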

\begin{proof}
Let $\C$ be a
completely anisotropic non-degenerate braided fusion category.
If the order of  $[\C]$ in $\W$ is odd then $s\W$ contains a non-trivial
element of odd order, which is impossible by Corollary~\ref{no odd in sW}. Thus,
$\W$ is $2$-primary.

Suppose that the order of $[\C]$ in $\W$ is finite and $\geq 64$. 
We  have $\C \cong \mathcal{P} \bt \C_1$, where $\mathcal{P}$ is a pointed non-degenerate
braided fusion category and $\C_1$ is non-degenerate and such that
$(\C_1)_{pt}=\Vec$ or $(\C_1)_{pt}=\sVec$.  Then the order 
of $[\C_1]$ in $\W$ is $\geq 64$ and 
the order of $S([\C_1])=[\C_1 \bt \sVec]$ in $s\W$ is $\geq \frac{64}{16}=4$.  
By Theorem~\ref{sW decomposition} we have  $[\C_1]\in s\W_2$
so the order of $[\C_1]$ is at most $2$, a contradiction.

Note that $\W$ does contain an element of order $32$, namely
the Witt class of $\C(sl(2),\, 6)$ (see \cite[Section 6.4]{DMNO}).
\end{proof}

\begin{remark}
The subgroup $\sW_2$ is non-zero.
Indeed, it contains non-zero elements  $S([\C(so(2n+1),\,2n+1)]),\, n\geq 1$.  
From the existence of conformal embeddings  $so(m)_m \times so(m)_m \subset so(m^2)_1$
we see that  the Witt classes $[\C(so(2n+1),\,2n+1)]$ are square roots of  
the classes of Ising categories. Hence, they all have order $32$ in the Witt group $\W$. 
Therefore,  $S([\C(so(2n+1),\,2n+1)])\in \sW_2$ by Corollary~\ref{slight property S}.

We conjecture that the classes  $S([\C(so(2n+1),\,2n+1)]),\, n\geq 1,$ are pairwise
Witt non-equivalent in $s\W$,  so that  $\sW_2$  has an infinite order.
\end{remark}

\subsection{The subgroup of  $\W$ generated by classes $[\C(sl(2),\, k)],\, k\geq 1$}

We use the notations and observations made in \cite[Section 6.4]{DMNO}. 
For the basic facts about categories $\C(sl(2),\, k)$ see section \ref{Prelim-affine}.

It follows from the fusion rules of $\C(sl(2),\, k)$ that the object $[k]$ is invertible of order 2: 
$[k]\ot [k] = [0]$ and it generates $\C(sl(2),\, k)_{pt}$  (i.e., it is the only non-trivial invertible object). Its tensor product with 
simple objects has the form  $[k]\ot [i] = [k-i]$. 
The centralizer $\C(sl(2),\, k)_+$ of $[k]$ in $\C(sl(2),\, k)$ is the full subcategory with simple objects $[2s],\ s=0,...,2l+1$ of ``even spin" .
The braiding of $[k]$ with itself is well known: 
$$c_{[k],[k]} = \theta_{[k]} = e^{2\pi i\frac{k}{4}}\id_{[0]}.$$ 

Consider three different  cases of the level : $k=2l+1$, $k=4l$, and $k=4l+2$.

For $k=2l+1$ the pointed part $\C(sl(2),\, k)_{pt}$ is non-degenerate. By M\"uger's decomposition theorem \cite{Mu2}
$$\C(sl(2),\, k)\simeq \C(sl(2),\, k)_{pt}\boxtimes\C(sl(2),\, k)_{+}.$$
It can be seen by looking at the fusion rules that $\C(sl(2),\, k)_{+}$ is simple. It is also completely anisotropic. 
Note that $\C(sl(2),\, 1)_{pt}=\C(sl(2),\, 1)$ and its class has order 8 in the Witt group $\W$: 
\beq\lb{sl21}[\C(sl(2),\, 1)]^8=1.\eeq
Moreover we have the following relation between Witt classes $$[\C(sl(2),\, 2l+1)_{pt}] = [\C(sl(2),\, 1)]^{(-1)^l}.$$ 

For $k=4l$ the pointed part $\C(sl(2),\, k)_{pt}$ is Tannakian. Let $A = [0]\oplus [k]$ be the regular etale algebra. It is known that the category $\C(sl(2),\, 4l)^0_A$ of dyslectic $A$-modules is simple. It is also known that this category is completely anisotropic if $l\not=2,7$. For $l=2$ and $l=7$ one has (see \cite{DMNO})
\beq\lb{sl28}[\C(sl(2),\, 8)] = [\C(sl(2),\, 3)_+]^{-2} = [\C(sl(2),\, 3)]^{-2}[\C(sl(2),\, 1)]^2,
\eeq 
\beq\lb{sl228}[\C(sl(2),\, 28)] = [\C(sl(2),\, 3)_+] = [\C(sl(2),\, 3)][\C(sl(2),\, 1)]^{-1}.
\eeq
Note also that $\C(sl(2),\, 4l)^0_A$ is pointed only for $l=1$ and $\C(sl(2),\, 4)^0_A$ is of Witt order 4:
\beq\lb{sl24}[\C(sl(2),\, 4)]^4=1.\eeq

For $k=4l+2$ the pointed part $\C(sl(2),\, k)_{pt}$ is equivalent to $\sVec$.
The centraliser $\C(sl(2),\, 4l+2)_+$ is a slightly degenerate category. By inspecting fusion rules it becomes clear that $\C(sl(2),\, 4l+2)_+$ is s-simple. It is non-trivial (i.e., does not coincide with $\sVec$) for $l>0$. Triviality of $\C(sl(2),\, 2)_+$ corresponds to the fact that $\C(sl(2),\, 2)$ is an Ising category. In particular 
\beq\lb{sl26}[\C(sl(2),\, 2)]^{16}=1.\eeq
It is also completely anisotropic for all $l$ except $l=2$ and it was shown in \cite{DMNO} that 
\beq\lb{sl210}[\C(sl(2),\, 10)] = [\C(sl(2),\, 2)]^7.
\eeq
For $l\not=2$ we have two possibilities: either $\C(sl(2),\, 4l+2)_+$ is of infinite order in $\sW$ or $\C(sl(2),\, 4l+2)_+$ is braided equivalent to its reverse $\C(sl(2),\, k)^{rev}_+$. If $F:\C(sl(2),\, k)_+\to\C(sl(2),\, k)^{rev}_+$ is a braided equivalence, then $F([j])$ is either $[j]$ or $[k-j]$ (this follows from the condition $\FPdim(F([j])) = \FPdim([j])$). Together with the condition $\theta_{F([j])} = \theta^{-1}_{[j]}$ it gives $\theta_{[j]} = \pm\theta_{[j]}^{-1}$ for all $j=2s$. This is equivalent to saying that $\frac{s(s+1)}{2l}$ is an integer for any $s=0,...,2l+1$. Clearly this can happen only for $l=1$. Indeed, as was shown in \cite{DMNO}, one has the following relation between classes in $\W$:
\beq\lb{sl26+}[\C(sl(2),\, 6)]^2 = [\C(sl(2),\, 2)]^3,\eeq 
which implies that the class of $\C(sl(2),\, 6)_+$ in $\sW$ has order 2. For all other values of $l$ the class of the category $\C(sl(2),\, 4l+2)_+$ is of infinite order in $\sW$.

So far we have collected simple, completely anisotropic, non-degenerate categories
$$\C(sl(2),\, 2l+1)_+,\quad l\geq 1,$$
$$\C(sl(2),\, 4l)_A^0,\quad l\geq 3,\ l\not=7$$
and s-simple, completely anisotropic, slightly degenerate categories
$$\C(sl(2),\, 4l+2)_+,\quad l\geq 3.$$

By looking at the  Frobenius-Perron dimensions we can see that all categories in the third series are different.
It is easy to see by looking at the multiplicative central charges that the first two series have empty  intersection and that all categories in the series are different. Moreover their corresponding slightly degenerate categories of the form $\C\boxtimes\sVec$ are s-simple, completely anisotropic and all different to each other. Finally none of them can appear in the third series (simply because in contrast to the categories in the third series their symmetric centers split out). 
Thus they are  generators of the torsion free part of the subgroup of $\W$ generated by classes $[\C(sl(2),\, k)]$. 

Thus we have established the following.
\bth
All relations between the classes $[\C(sl(2),\, k)]$ in the Witt group $\W$ follow from the relations (\ref{sl21} -
\ref{sl26+}). 
\eth

\bpr
Let $I$ be a finite subset of  the set of odd positive integers.
The category $\boxtimes_{k\in I}\C(sl(2),\, k)_+$ is completely anisotropic.
\epr
\bpf
It is known that categories $\boxtimes_{k\in I}\C(sl(2),\, k)_+$ are completely anisotropic \cite{KiO}.
By Corollary~\ref{relations in sW} it suffices to show that 
\[
\C(sl(2),\, k)_+ \not\cong (\C(sl(2),\, k')_+)^\rev
\]
for any odd $k\neq k'$. This is clear from looking at the Frobenius-Perron 
dimensions of categories $\C(sl(2),\, k)_+$.
\epf


\end{section}

\bibliographystyle{ams-alpha}

\end{document}